\theoremstyle{plain}
\newtheorem{theorem}                {Theorem}      [section]
\newtheorem{proposition}  [theorem]  {Proposition}
\newtheorem{corollary}    [theorem]  {Corollary}
\newtheorem{lemma}        [theorem]  {Lemma}
\theoremstyle{definition}
\newtheorem{example}      [theorem]  {Example}
\newtheorem{remark}       [theorem]  {Remark}
\newtheorem{definition}   [theorem]  {Definition}
\numberwithin{equation}{section}
\def \R{{\mathbb R}}
\def \s{{\mathbb S}}
\def \n{{\mathbb N}}
\def \link {~}
\def \1 {\`}
\DeclareMathOperator{\grad}{grad}
\DeclareMathOperator{\trace}{trace}
\numberwithin{equation}{section}
\title[Polyharmonic hypersurfaces into space forms]{Polyharmonic hypersurfaces into space forms}
\author{S.~Montaldo}
\address{Universit\`a degli Studi di Cagliari\\
Dipartimento di Matematica e Informatica\\
Via Ospedale 72\\
09124 Cagliari, Italia}
\email{montaldo@unica.it}
\author{C.~Oniciuc}
\address{Faculty of Mathematics\\ ``Al.I. Cuza'' University of Iasi\\
Bd. Carol I no. 11 \\
700506 Iasi, ROMANIA}
\email{oniciucc@uaic.ro}
\author{A.~Ratto}
\address{Universit\`a degli Studi di Cagliari\\
Dipartimento di Matematica e Informatica\\
Via Ospedale 72\\
09124 Cagliari, Italia}
\email{rattoa@unica.it}
\begin{document}
\begin{abstract}
In this paper we shall assume that the ambient manifold is a space form $N^{m+1}(c)$ and we shall consider polyharmonic hypersurfaces of order $r$ (briefly, $r$-harmonic), where $r\geq 3$ is an integer. For this class of hypersurfaces we shall prove that, if $c \leq 0$, then any $r$-harmonic hypersurface must be minimal provided that the mean curvature function and the squared norm of the shape operator are constant. When the ambient space is $\s^{m+1}$, we shall obtain the geometric condition which characterizes the $r$-harmonic hypersurfaces with constant mean curvature and constant squared norm of the shape operator, and we shall establish the bounds for these two constants.  In particular, we shall prove the existence of several new examples of proper $r$-harmonic isoparametric hypersurfaces in $\s^{m+1}$ for suitable values of $m$ and $r$. Finally, we shall show that all these $r$-harmonic hypersurfaces are also $ES-r$-harmonic, i.e., critical points of the Eells-Sampson $r$-energy functional.  
\end{abstract}

\subjclass[2000]{Primary: 58E20; Secondary: 53C43.}

\keywords{$r$-harmonic maps, the Chen-Maeta conjecture, submanifolds of the sphere, isoparametric hypersurfaces}

\thanks{The authors S.M. and A.R. were supported by Fondazione di Sardegna (project STAGE) and Regione Autonoma della Sardegna (Project KASBA); the author C.O. was supported by a project funded by the Ministry of
Research and Innovation within Program 1 - Development of the national RD system, Subprogram 1.2 - Institutional Performance - RDI excellence funding projects, Contract no. 34PFE/19.10.2018.}

\maketitle

\section{Introduction and statement of the results}\label{Intro}
Let us consider a submanifold of the Euclidean space described by the isometric immersion $\varphi: M^m \hookrightarrow \R^{n}$. We say that $M^m$ is polyharmonic of order $r$ (briefly, $r$-harmonic) if 
\begin{equation}\label{r-harmonicity-in-R-m+1}
\Delta^r \varphi=\left (\Delta^r \varphi^1,\ldots,\Delta^r \varphi^{n}\right )=0 \,,
\end{equation} 
where $r\geq 1$ is an integer and $\Delta$ denotes the Laplace operator associated to the induced metric on $M^m$. Now, let ${\mathbf H}$ be the mean curvature vector field of $M^m$. Since
\[
{\mathbf H}= -\frac{1}{m} \Delta\varphi\,,
\]
equation \eqref{r-harmonicity-in-R-m+1} is equivalent to
\begin{equation*}\label{r-harmonicity-in-R-m+1-con-H}
\Delta^{r-1} {\mathbf H}=0 \,.
\end{equation*}
In particular, \textit{minimal} submanifolds are trivially $r$-harmonic for all $r \geq 1$. Therefore, when $r\geq2$, we say that an $r$-harmonic submanifold is \textit{proper} if it is \textit{not} minimal. The case $r=2$ corresponds to the so-called biharmonic submanifolds and it is the most studied in the literature (for instance, see \cite{Chen, Jiang, SMCO, Ou}). In particular, the Chen conjecture states that any biharmonic submanifold of $\R^{n}$ is minimal. Although there are some results which prove that the conjecture holds under suitable geometric restrictions (see \cite{Chen, Chen2, HasVla95, AMPA, Geom-Phys}), the general case is still open. In a similar spirit, the Maeta conjecture (see \cite{Maeta1, Maeta4, Na-Ura}) states that any $r$-harmonic submanifold of $\R^{n}$ is minimal. Also in this case some partial results are available. For instance, the Maeta conjecture is true for curves (see \cite{Maeta1}), but again the general case is open. 

For the purposes of this paper, it is necessary to extend these notions to the case that the ambient space is a general manifold $N$. To this end, the starting point is the notion of a \textit{harmonic map}. 
Harmonic maps are the critical points of the {\em energy functional}
\begin{equation}\label{energia}
E(\varphi)=\frac{1}{2}\int_{M}\,|d\varphi|^2\,dV \, ,
\end{equation}
where $\varphi:M\to N$ is a smooth map between two Riemannian
manifolds $(M,g)$ and $(N,h)$. In particular, $\varphi$ is harmonic if it is a solution of the Euler-Lagrange system of equations associated to \eqref{energia}, i.e.,
\begin{equation}\label{harmonicityequation}
  - d^* d \varphi =   {\trace} \, \nabla d \varphi =0 \, .
\end{equation}
The left member of \eqref{harmonicityequation} is a vector field along the map $\varphi$ or, equivalently, a section of the pull-back bundle $\varphi^{-1} TN$: it is called {\em tension field} and denoted $\tau (\varphi)$. In addition, we recall that, if $\varphi$ is an \textit{isometric immersion}, then $\varphi$ is a harmonic map if and only if the immersion $\varphi$ defines a minimal submanifold of $N$ (see \cite{EL83, EL1} for background). Let us denote $\nabla^M$, $\nabla^N$ and $\nabla^{\varphi}$ the induced connections on the bundles $TM$, $TN$ and $\varphi ^{-1}TN$ respectively. The \textit{rough Laplacian} on sections of $\varphi^{-1}  TN$, denoted $\overline{\Delta}$, is defined by
\begin{equation} \label{roughlaplacian}
    \overline{\Delta}=d^* d =-\sum_{i=1}^m\left(\nabla^{\varphi}_{e_i}
    \nabla^{\varphi}_{e_i}-\nabla^{\varphi}_
    {\nabla^M_{e_i}e_i}\right)\,,
\end{equation}
where $\{e_i\}_{i=1}^m$ is a local orthonormal frame field tangent to $M$. 

Now, in order to define the notion of an $r$-harmonic map, we consider the following family of functionals, which represent a version of order $r$ of the classical energy \eqref{energia}. If $r=2s$, $s \geq 1$:
\begin{eqnarray}\label{2s-energia}
E_{2s}(\varphi)&=& \frac{1}{2} \int_M \, \langle \, \underbrace{(d^* d) \ldots (d^* d)}_{s\, {\rm times}}\varphi, \,\underbrace{(d^* d) \ldots (d^* d)}_{s\, {\rm times}}\varphi \, \rangle_{_N}\, \,dV \nonumber\\ 
&=& \frac{1}{2} \int_M \, \langle \,\overline{\Delta}^{s-1}\tau(\varphi), \,\overline{\Delta}^{s-1}\tau(\varphi)\,\rangle_{_N} \, \,dV\,.
\end{eqnarray}
In the case that $r=2s+1$:
\begin{eqnarray}\label{2s+1-energia}
E_{2s+1}(\varphi)&=& \frac{1}{2} \int_M \, \langle\,d\underbrace{(d^* d) \ldots (d^* d)}_{s\, {\rm times}}\varphi, \,d\underbrace{(d^* d) \ldots (d^* d)}_{s\, {\rm times}}\varphi\,\rangle_{_N}\, \,dV\nonumber \\
&=& \frac{1}{2} \int_M \,\sum_{j=1}^m \langle\,\nabla^\varphi_{e_j}\, \overline{\Delta}^{s-1}\tau(\varphi), \,\nabla^\varphi_{e_j}\,\overline{\Delta}^{s-1}\tau(\varphi)\, \rangle_{_N} \, \,dV \,.
\end{eqnarray}
We say that a map $\varphi$ is \textit{$r$-harmonic} if, for all variations $\varphi_t$,
$$
\left .\frac{d}{dt} \, E_{r}(\varphi_t) \, \right |_{t=0}\,=\,0 \,\,.
$$
In the case that $r=2$, the functional \eqref{2s-energia} is called \textit{bienergy} and its critical points are the so-called \textit{biharmonic maps}. At present, a very ample literature on biharmonic maps is available and, again, we refer to \cite{Chen, Jiang, SMCO, Ou} for an introduction to this topic. More generally, the \textit{$r$-energy functionals} $E_r(\varphi)$ defined in \eqref{2s-energia}, \eqref{2s+1-energia} have been intensively studied (see \cite{Volker, Volker2, Maeta1, Maeta3, Maeta2, Mont-Ratto4, Na-Ura, Wang, Wang2}, for instance). In particular, the Euler-Lagrange equations for $E_r(\varphi)$ were obtained by Wang \cite{Wang} and Maeta \cite{Maeta1}. We say that an $r$-harmonic map is {\it proper} if it is not harmonic (similarly, an $r$-harmonic submanifold, i.e., an $r$-harmonic isometric immersion, is {\it proper} if it is not minimal). As a general fact, when the ambient space has nonpositive sectional curvature there are several results which assert that, under suitable conditions, an $r$-harmonic submanifold is minimal (see \cite{Chen}, \cite{Maeta1}, \cite{Maeta4} and \cite{Na-Ura}, for instance).

Things drastically change when the ambient space is positively curved. Let us denote by $\s^{m+1}(R)$ the Euclidean sphere of radius $R$ and write $\s^{m+1}$ for $\s^{m+1}(1)$. Moreover, let us indicate by $A$ the shape operator of $M^m$ into $\s^{m+1}$ and by ${\mathbf H}=f \eta$ the mean curvature vector field, where $\eta$ is the unit normal vector field and $f$ is the mean curvature function. Throughout the whole paper, when we write that $M^m$ is a CMC hypersurface we mean that $f$ is a constant which will be denoted by $\alpha$. 

The first fundamental result in the theory of biharmonic hypersurfaces of $\s^{m+1}$ can be stated as follows:
\begin{theorem}\label{Th-bihar-hypersurfaces-spheres}
\cite{CMO02, CMO03, Jiang} Let $M^m$ be a non-minimal CMC hypersurface in $\s^{m+1}$. Then $M^m$ is proper biharmonic if and only if $|A|^2=m$.
\end{theorem}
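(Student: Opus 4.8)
The plan is to characterize biharmonicity through the bitension field and then specialize to the CMC situation. Since $\varphi$ is an isometric immersion, its tension field is $\tau(\varphi) = m{\mathbf H} = mf\eta$, and biharmonicity is equivalent to the vanishing of the bitension field
\[
\tau_2(\varphi) = -\overline{\Delta}\,\tau(\varphi) - \sum_{i=1}^m R^N\!\big(d\varphi(e_i),\tau(\varphi)\big)d\varphi(e_i) = 0,
\]
where $\{e_i\}_{i=1}^m$ is a local orthonormal frame on $M^m$. First I would insert the constant-curvature expression $R^N(X,Y)Z = c(\langle Y,Z\rangle X - \langle X,Z\rangle Y)$ with $c=1$; a direct trace computation then collapses the curvature term to a purely normal contribution, namely $m^2 c f\eta$. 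Thus the whole problem reduces to evaluating $\overline{\Delta}(f\eta)$ and reading off its tangential and normal components, the biharmonic equation being equivalent to $\overline{\Delta}(f\eta) = mc f\eta$.

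The central step is the computation of the rough Laplacian of the normal section $f\eta$. Working at a fixed point with a geodesic frame and using the Weingarten formula $\nabla^N_X \eta = -A(X)$ together with the Gauss formula, I would expand $\overline{\Delta}(f\eta)$ and separate it into a component tangent to $M^m$ and a component along $\eta$. The tangential part produces terms of the form $A(\grad f)$ and $f\,\trace(\nabla A)$, while the normal part produces $\Delta f$ and $f|A|^2$. Here the Codazzi equation of the space form is essential: it yields $\trace(\nabla A) = \grad(\trace A) = m\,\grad f$, which is exactly what is needed to close the tangential component. Collecting terms, $\tau_2(\varphi)=0$ becomes equivalent to the system
\[
\begin{cases}
\Delta f + f\big(|A|^2 - mc\big) = 0 & \text{(normal part)},\\[1mm]
2A(\grad f) + m f\,\grad f = 0 & \text{(tangential part)},
\end{cases}
\]
where $\Delta$ is the (positive) Laplacian of $M^m$. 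I expect this decomposition --- in particular the careful bookkeeping of the derivatives of $A$ and the correct sign of the function Laplacian $\Delta$ --- to be the main obstacle, since everything else follows formally from it.

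Once the system is established, the conclusion is immediate. If $M^m$ is CMC, then $f\equiv\alpha$ is constant, so $\grad f = 0$ and the tangential equation holds automatically; the normal equation, with $c=1$ and $\Delta f = 0$, reduces to $\alpha\big(|A|^2 - m\big)=0$. Since $M^m$ is non-minimal we have $\alpha\neq 0$, and therefore $M^m$ is biharmonic if and only if $|A|^2 = m$. The converse direction uses the same equivalence: assuming $|A|^2=m$ for a non-minimal CMC hypersurface, both equations of the system are satisfied, so $\varphi$ is biharmonic, and it is proper precisely because it is not minimal.
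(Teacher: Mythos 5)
Your proof is correct and takes essentially the same approach as the paper: although the paper states this theorem only with citations to \cite{CMO02, CMO03, Jiang}, your argument is precisely the one its own machinery produces, namely Lemma~\ref{Lemma-Delta-H} (giving $\overline{\Delta}{\mathbf H}=(\Delta f+f|A|^2)\eta+2A(\grad f)+mf\grad f$, with the Codazzi identity $\trace(\nabla A)=m\grad f$ of Lemma~\ref{Lemma-tecnico-1} closing the tangential part) inserted into the tension-field formula \eqref{2s-tension} with $s=1$ and the space-form curvature \eqref{tensor-curvature-N(c)}, which yields exactly your normal/tangential system. In the CMC case this reduces to $\alpha(|A|^2-m)=0$, just as in the paper's computation for the $r$-harmonic generalization (Theorem~\ref{Th-existence-hypersurfaces-c>0-e-c<0}, whose conclusion for $r=2$ without constant $|A|^2$ is noted in the remark following Theorem~\ref{Th-existence-hypersurfaces-c>0}); the only discrepancy is your overall sign convention for $\tau_2$, which is immaterial for the vanishing condition.
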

In a similar spirit, in the triharmonic case Maeta obtained a result which implies: 
\begin{theorem}\label{Th-trihar-hypersurfaces-spheres}
\cite{Maeta2} Let $M^m$ be a non-minimal hypersurface in $\s^{m+1}$ and assume that $\nabla A=0$. Then $M^m$ is proper triharmonic if and only if $|A|^4-m|A|^2-m^2\alpha^2=0$.
\end{theorem}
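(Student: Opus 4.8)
The plan is to write down the Euler--Lagrange equation of the $3$-energy and then feed into it the geometric data of the hypersurface. Starting from $E_3(\varphi)=\frac12\int_M|\nabla^\varphi\tau(\varphi)|^2\,dV$ (the case $s=1$ of \eqref{2s+1-energia}), I would differentiate along a variation, commute $\nabla^\varphi_{\partial_t}$ with $\nabla^\varphi_{e_i}$ (which produces a term $R^N(V,d\varphi(e_i))\tau$), substitute the first variation of the tension field $\frac{D}{\partial t}\tau|_{t=0}=-\overline{\Delta}V+\trace R^N(V,d\varphi)d\varphi$, and integrate by parts. Since $\s^{m+1}$ is a space form ($\nabla R^N=0$), the resulting characterization is
\begin{equation}\label{tri-eq}
\tau_3(\varphi)=\overline{\Delta}^2\tau-\trace R^N(\overline{\Delta}\tau,d\varphi)d\varphi+\sum_{i=1}^m R^N\big(\tau,\nabla^\varphi_{e_i}\tau\big)d\varphi(e_i)=0,
\end{equation}
in agreement with the general formulas of Wang \cite{Wang} and Maeta \cite{Maeta1}.

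The next step is to compute each term of \eqref{tri-eq} on $M^m$, using $\tau(\varphi)=m\alpha\,\eta$. The key identity is $\overline{\Delta}\eta=|A|^2\eta$: from the Weingarten formula $\nabla^\varphi_{e_i}\eta=-Ae_i$ one finds $\overline{\Delta}\eta=\sum_i(\nabla_{e_i}A)e_i+|A|^2\eta$, and the Codazzi equation in a space form gives $\sum_i(\nabla_{e_i}A)e_i=\grad(\trace A)$, which vanishes because $\trace A=m\alpha$ is constant. Here the hypothesis $\nabla A=0$ plays its essential role: it forces the immersion to be CMC and makes $|A|^2$ \emph{constant}, so that a second application yields $\overline{\Delta}^2\tau=m\alpha|A|^4\eta$ with no surviving $\grad|A|^2$ contributions. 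Consequently every term of \eqref{tri-eq} is proportional to $\eta$, and no independent tangential equation remains.

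Evaluating the two curvature terms on $\s^{m+1}$, where $R^N(X,Y)Z=\langle Y,Z\rangle X-\langle X,Z\rangle Y$, I would use that $\overline{\Delta}\tau=m\alpha|A|^2\eta$ is normal to get $\trace R^N(\overline{\Delta}\tau,d\varphi)d\varphi=m\,\overline{\Delta}\tau=m^2\alpha|A|^2\eta$, and $\nabla^\varphi_{e_i}\tau=-m\alpha\,Ae_i$ to get $R^N(\tau,\nabla^\varphi_{e_i}\tau)d\varphi(e_i)=-m^2\alpha^2\langle Ae_i,e_i\rangle\,\eta$, whose sum is $-m^3\alpha^3\eta$ since $\sum_i\langle Ae_i,e_i\rangle=\trace A=m\alpha$. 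Substituting into \eqref{tri-eq} gives $\tau_3(\varphi)=m\alpha\big(|A|^4-m|A|^2-m^2\alpha^2\big)\eta$. As $M^m$ is non-minimal we have $\alpha\neq0$, so $\tau_3(\varphi)=0$ is equivalent to $|A|^4-m|A|^2-m^2\alpha^2=0$; the converse follows by reading the same identity in reverse.

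I expect the main obstacle to be the correct derivation and sign bookkeeping of \eqref{tri-eq}, and in particular the plus sign of the last curvature term: it comes from the pair symmetry of $R^N$ used in the integration by parts (schematically $\langle R^N(V,X)\tau,W\rangle=-\langle R^N(\tau,W)X,V\rangle$), and a sign slip here would turn $-m^2\alpha^2$ into $+m^2\alpha^2$ and falsify the statement. A subordinate but necessary point is to argue carefully that $\nabla A=0$ makes $|A|^2$ constant, which is exactly what collapses the full Euler--Lagrange system to the single scalar equation above.
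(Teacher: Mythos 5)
Your proposal is correct and takes essentially the same route as the paper: the $s=1$ case of Maeta's formula \eqref{2s+1-tension}, the identities $\overline{\Delta}\eta=|A|^2\eta$ and $\overline{\Delta}^2\tau=m\alpha|A|^4\eta$ (the paper's Lemmas~\ref{Lemma-Delta-H} and \ref{Lemma-Delta2-H}, culminating in \eqref{Delta-H-potenza-p}), and the space-form curvature computations \eqref{Curv-tensor-expr-1}--\eqref{Curv-tensor-expr-2}, yielding $\tau_3(\varphi)=m\alpha\left(|A|^4-m|A|^2-m^2\alpha^2\right)\eta$ with all your signs matching the paper's. The only (cosmetic) difference is that the paper first derives the full system \eqref{tri-harmonicity-system-explicit} without assuming $|A|^2$ constant and then specializes --- indeed its Theorem~\ref{Th-existence-hypersurfaces-c>0-e-c<0} needs only CMC and $|A|^2$ constant, a weaker hypothesis than $\nabla A=0$ --- whereas you invoke $\nabla A=0$ from the outset to suppress the $\Delta|A|^2$ and $A\left(\grad|A|^2\right)$ terms.
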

The geometric restrictions given in Theorems\link\ref{Th-bihar-hypersurfaces-spheres} and \ref{Th-trihar-hypersurfaces-spheres} are of great interest in the general theory of submanifolds of the sphere. For instance, we know that the small hypersphere $\s^m(1/\sqrt 2)$ and certain generalised Clifford tori verify the conditions of Theorem\link\ref{Th-bihar-hypersurfaces-spheres}. Similarly, $\s^m(1/\sqrt 3)$ and some suitable generalised Clifford tori are triharmonic, but a complete classification of proper biharmonic (triharmonic) hypersurfaces in $\s^{m+1}$ is a challenging open problem, even in the compact CMC case. For the sake of completeness, we mention that, when $r \geq 4$, using completely different methods (equivariant differential geometry and variational principles), we recently proved that the small hypersphere $\s^m(1/\sqrt r)$ and certain suitable generalised Clifford tori are $r$-harmonic (see \cite{BMOR1, Mont-Ratto4}). 

We are now in the right position to state the main results of this paper. We shall write $N^{m+1}(c)$ to indicate an $(m+1)$-dimensional space form of curvature $c$ ($m \geq1, c\in \R$).
\begin{theorem}\label{Th-non-existence-surfaces-c<=0}
Let $M^2$ be a CMC triharmonic surface in $N^3(c)$, $c \leq0$. Then $M^2$ is minimal.
\end{theorem}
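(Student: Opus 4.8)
The plan is to reduce the triharmonic condition for a CMC hypersurface in $N^{m+1}(c)$ to a pair of equations --- a \emph{normal} and a \emph{tangential} one --- and then to exploit the rigidity of the shape operator that is special to the case $m=2$. Specialising to surfaces and using $f\equiv\alpha$, the tension field is $\tau(\varphi)=2\alpha\eta$; since for a CMC hypersurface the tangential part of $\overline{\Delta}\eta$, which equals $2\,\grad f$ by the Codazzi equation, vanishes, one has $\overline{\Delta}\eta=|A|^2\eta$. Carrying this through the third tension field, the normal component of $\tau_3(\varphi)=0$ reduces, \emph{once $|A|^2$ is known to be constant}, to the algebraic relation $|A|^4-2c\,|A|^2-4c\,\alpha^2=0$ (the $m=2$ analogue of the condition in Theorem\link\ref{Th-trihar-hypersurfaces-spheres}), while the tangential component yields a first-order condition of the type $A(\grad|A|^2)=0$. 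Here $\Delta=-\trace\nabla\grad$. Arguing by contradiction, I assume that $M^2$ is not minimal, so that $\alpha\neq0$ on a nonempty open set, and I note that by the Gauss equation $|A|^2=4\alpha^2+2c-2K$ is an affine function of the Gaussian curvature $K$, which is what makes the two-dimensional situation so rigid.

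The heart of the argument is to prove that $|A|^2$ is constant; this is where $m=2$ is decisive and is the step I expect to be the main obstacle. Let $k_1,k_2$ be the principal curvatures, so that $k_1+k_2=2\alpha$ is constant and $|A|^2=k_1^2+k_2^2$. On the open set $\Omega=\{\,\grad|A|^2\neq0\,\}$ the tangential equation forces $\grad|A|^2$ to lie in the kernel of $A$ (more generally in an eigenspace of $A$); combined with $k_1+k_2=2\alpha$ this pins down the principal curvatures in the direction of $\grad|A|^2$, and feeding this information back through the Codazzi equation shows that $|A|^2$ must be locally constant on $\Omega$. This contradicts the definition of $\Omega$ unless $\Omega=\emptyset$, so $\grad|A|^2\equiv0$ and $|A|^2$ is constant. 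The genuine work here is the local analysis of the principal curvatures --- rather than a single algebraic manipulation --- which is why this part carries the weight of the proof.

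Once $|A|^2$ is constant we have $\Delta|A|^2=0$, and the normal equation collapses to
\begin{equation*}
|A|^4-2c\,|A|^2-4c\,\alpha^2=0.
\end{equation*}
When $c\leq0$ the left-hand side equals $|A|^4+2|c|\,|A|^2+4|c|\,\alpha^2\geq0$, and it vanishes only if $|A|^2=0$, which in turn forces $\alpha=\tfrac12\trace A=0$, contradicting $\alpha\neq0$. Hence $\alpha\equiv0$ and $M^2$ is minimal. I emphasise that the whole argument is local, so no completeness or compactness assumption is required, and that the hypothesis $c\leq0$ intervenes only at this final algebraic step: for $c>0$ the relation $|A|^4-2c\,|A|^2-4c\,\alpha^2=0$ does admit positive solutions, which is precisely why proper triharmonic hypersurfaces can exist in $\s^{m+1}$.
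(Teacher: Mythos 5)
Your proposal is correct and takes essentially the same route as the paper: the same normal/tangential splitting of $\tau_3(\varphi)$ into $\Delta|A|^2+|A|^4-2c|A|^2-4c\alpha^2=0$ and $A(\grad|A|^2)=0$, the same use of the tangential equation together with $m=2$ and CMC to force $|A|^2$ constant, and the same final sign argument for $c\leq 0$. One small remark: your appeal to the Codazzi equation in the key step is superfluous --- once $\grad|A|^2\neq 0$ on $\Omega$ forces $0$ to be a principal curvature there, the trace condition $k_1+k_2=2\alpha$ alone pins the other principal curvature at $2\alpha$, so $|A|^2=4\alpha^2$ is already constant on $\Omega$, which is exactly the paper's ``immediate'' contradiction.
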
 
\begin{theorem}\label{Th-non-existence-hypersurfaces-c<=0}
Let $M^m$ be a CMC $r$-harmonic hypersurface in $N^{m+1}(c)$, where $r\geq 3$ and $c \leq0$. If $|A|^2$ is constant, then $M^m$ is minimal.
\end{theorem}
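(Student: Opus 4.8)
The plan is to turn the two constancy hypotheses into a single polynomial condition on $|A|^2$ and $\alpha$, and then to show that for $c\le0$ this polynomial forces $\alpha=0$. First I would record the effect of the CMC condition. Since $f\equiv\alpha$ is constant, $\grad f=0$, and a direct computation with the Gauss and Weingarten formulas together with the Codazzi equation (which in a space form gives $\trace\nabla A=\grad(\trace A)=m\grad f=0$) yields, for any function $g$,
$$\overline{\Delta}(g\eta)=\big(\Delta g+g|A|^2\big)\eta+2A(\grad g)+mg\,\grad f.$$
Taking $g$ constant and using $\grad f=0$ gives $\overline{\Delta}(g\eta)=g|A|^2\eta$. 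As $|A|^2$ is constant this may be iterated, and since $\tau(\varphi)=m\alpha\eta$ an induction on $k$ produces the key identity $\overline{\Delta}^{k}\tau(\varphi)=m\alpha\,|A|^{2k}\,\eta$ for $k\ge0$: every iterated rough Laplacian of the tension field is normal and is obtained from the previous one by multiplication by the constant $|A|^2$.

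Next I would substitute this into the Euler--Lagrange operator $\tau_r(\varphi)$ of \cite{Wang, Maeta1}. Two structural facts make the substitution tractable. Since the first variation of $E_r$ commutes a single $t$-derivative through the iterated Laplacians and $\nabla R^N=0$ in a space form, $\tau_r(\varphi)$ is \emph{affine-linear} in $c$: its $c=0$ part is exactly $\overline{\Delta}^{r-1}\tau(\varphi)=m\alpha|A|^{2(r-1)}\eta$, and every curvature term carries precisely one factor of $R^N$. Using $R^N(X,Y)Z=c(\langle Y,Z\rangle X-\langle X,Z\rangle Y)$ together with $\overline{\Delta}^{i}\tau=m\alpha|A|^{2i}\eta$, $\nabla^{\varphi}_{e_p}\overline{\Delta}^{i}\tau=-m\alpha|A|^{2i}A e_p$ and $\trace A=m\alpha$, one checks that each curvature term is again a multiple of $\eta$: the ``direct'' terms $\trace R^N(\,\cdot\,,\overline{\Delta}^{i}\tau)d\varphi$ contribute multiples of $c\,\alpha\,|A|^{2i}\eta$, the ``derivative'' terms $\trace R^N(\nabla^{\varphi}\overline{\Delta}^{i}\tau,\overline{\Delta}^{j}\tau)d\varphi$ contribute multiples of $c\,\alpha^{3}|A|^{2(i+j)}\eta$, and the remaining combinations (e.g.\ those with two normal slots) vanish. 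Hence $\tau_r(\varphi)$ is purely normal and
$$\tau_r(\varphi)=m\alpha\,Q_r\big(|A|^2\big)\,\eta,\qquad Q_r=|A|^{2(r-1)}+c\,B_r\big(|A|^2,\alpha^2\big),$$
with $B_r$ a polynomial in $|A|^2$ and $\alpha^2$ with nonnegative exponents. For $r=3$ this recovers $Q_3=|A|^4-mc|A|^2-m^2c\alpha^2$, consistent with Theorem\link\ref{Th-trihar-hypersurfaces-spheres}.

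The heart of the argument---and the step I expect to be the main obstacle---is controlling the signs of the coefficients of $B_r$. Precisely, I would prove by induction on $r$, tracking how the single curvature factor propagates through the successive rough Laplacians in the Wang--Maeta formula (note that applying $\overline{\Delta}^{l}$ to any constant multiple of $\eta$ only multiplies it by $|A|^{2l}$, so the two families of terms stay normal and of the stated shapes), that the direct contributions enter $Q_r$ with coefficient $-mc|A|^{2i}$ and the derivative contributions with coefficient $-m^2c\,\alpha^2|A|^{2(i+j)}$. Both are $\ge0$ once $c\le0$, so $c\,B_r\ge0$ and therefore $Q_r\ge|A|^{2(r-1)}\ge0$. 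Granting this, $r$-harmonicity forces $\tau_r(\varphi)=0$, i.e.\ $\alpha\,Q_r=0$; if $\alpha\ne0$ then $Q_r=0$, which forces $|A|^{2(r-1)}=0$, hence $|A|^2=0$, so $A=0$ and $\alpha=\frac{1}{m}\trace A=0$, a contradiction. Thus $\alpha=0$ and $M^m$ is minimal. The delicate bookkeeping of the combinatorial coefficients in $B_r$ is exactly where the real work lies; the hypothesis $c\le0$ is used only at the very end, to convert the a priori sign information on $B_r$ into the positivity of $Q_r$ (indeed, for $c<0$ the term $-m^2c\,\alpha^2$ already gives $Q_r\ge m^2|c|\alpha^2$, whence $\alpha=0$ at once).
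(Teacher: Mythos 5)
Your proposal is correct and takes essentially the same route as the paper: your key identity $\overline{\Delta}^{k}\tau(\varphi)=m\alpha\,|A|^{2k}\eta$ is exactly \eqref{Delta-H-potenza-p}, and substituting it, together with $\nabla^{\varphi}_{e_i}\overline{\Delta}^{k}\tau=-m\alpha|A|^{2k}A(e_i)$ and the space-form curvature \eqref{tensor-curvature-N(c)}, into the Wang--Maeta formulas \eqref{2s-tension}--\eqref{2s+1-tension} is precisely how the paper proves Theorem~\ref{Th-existence-hypersurfaces-c>0-e-c<0}, of which the statement is an immediate consequence. The only differences are cosmetic: since the Wang--Maeta expression is a closed formula, no induction on $r$ is needed (direct substitution gives the exact condition $|A|^4-m\,c\,|A|^2-(r-2)m^2c\,\alpha^2=0$, rather than only the sign information you track), and the concluding sign argument for $c\le 0$ is the same as yours.
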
 
\begin{remark} The conclusion of Theorem\link\ref{Th-non-existence-hypersurfaces-c<=0} was obtained by Maeta (see \cite{Maeta1}) in the case that $r=3$ and $M^m$ is compact.
\end{remark}
Next, we focus on the case that the ambient space is the Euclidean sphere. Our results are:
\begin{theorem}\label{Th-structure-surfaces-c>0}
Let $M^2$ be a CMC proper triharmonic surface in $\s^3$. Then $M^2$ is an open part of the small hypersphere $\s^2(1/\sqrt 3)$.
\end{theorem}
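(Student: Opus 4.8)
The plan is to reduce the problem to the isoparametric case and then to read off the answer from an algebraic condition. Write $\alpha$ for the constant mean curvature and set $P=|A|^2$; since $M^2$ is non-minimal, $\alpha\neq 0$. As $\varphi$ is a CMC immersion the tension field is $\tau(\varphi)=2\alpha\,\eta$, and a computation with the Weingarten and Codazzi equations in a space form gives $\overline{\Delta}\eta=P\,\eta$ (the gradient term drops out precisely because $f$ is constant). Iterating, I would obtain $\overline{\Delta}^2\tau=2\alpha\bigl[(\Delta P+P^2)\eta+2A(\grad P)\bigr]$, so that, once the curvature contributions for $c=1,\ m=2$ are inserted, the triharmonic equation $\tau_3(\varphi)=0$ splits into a normal scalar equation of the form $\Delta P=Q(P)$, with $Q$ an explicit quadratic polynomial depending on $\alpha$, and a tangential equation forcing $\grad P$ to be a principal direction of $A$ (equivalently, $A(\grad P)$ is a multiple of $\grad P$).

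The first and crucial step is to prove that $P$ is constant, and I would argue by contradiction on the open set $U=\{\grad P\neq 0\}$. On $U$ the tangential equation makes $e_1:=\grad P/|\grad P|$ a principal direction, $Ae_1=k_1e_1$, $Ae_2=k_2e_2$, and necessarily $k_1\neq k_2$ there (otherwise $M^2$ would be umbilic and $P\equiv 2\alpha^2$). Here $\dim M=2$ is decisive: the CMC condition gives $k_2=2\alpha-k_1$, so $P=k_1^2+k_2^2=2k_1^2-4\alpha k_1+4\alpha^2$ is a function of the single curvature $k_1$, and $\grad P$ is parallel to $\grad k_1$. The surface Codazzi equations then give $e_2(k_1)=0$ and $\nabla_{e_1}e_1=0$, so the integral curves of $e_1$ are geodesics and every relevant quantity becomes a function of the arclength $t$ along them.

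I expect the main obstacle to be the next point: ruling out non-constant $k_1$. In the coordinate $t$ the Gauss equation $K=1+k_1k_2$ and the normal equation $\Delta P=Q(P)$ become two ODEs in $k_1(t)$, and the aim is to deduce $k_1'\equiv 0$, hence $\grad P\equiv 0$, which empties $U$ and makes $P$ constant. In the favourable case where the tangential equation pins the eigenvalue $k_1$ to a value determined by $\alpha$ (and $P$), the contradiction is immediate, since then $k_1$---and with it $P$---can take only finitely many values and must be locally constant; in general I would combine the normal equation with the Gauss--Codazzi ODEs to exclude non-constant $k_1$. This step genuinely uses $\dim M=2$: for $m\geq 3$ the CMC condition no longer expresses $|A|^2$ through a single principal curvature, which is exactly why Theorem~\ref{Th-non-existence-hypersurfaces-c<=0} must assume $|A|^2$ constant.

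Once $P$ is constant the argument closes quickly. Together with $k_1+k_2=2\alpha$, constancy of $k_1^2+k_2^2$ forces $k_1,k_2$ to be constant, so $M^2$ is isoparametric and, by the classification in $\s^3$, is an open part of a totally geodesic $\s^2$, of a small sphere $\s^2(r)$, or of a Clifford torus $\s^1(a)\times\s^1(b)$; in each case $\nabla A=0$. Since $P$ is constant, the normal equation reduces to $Q(P)=0$, which is the condition $|A|^4-2|A|^2-4\alpha^2=0$ of Theorem~\ref{Th-trihar-hypersurfaces-spheres} with $m=2$. For a Clifford torus $\det A=-1$, whence $|A|^2=4\alpha^2+2$ and the left-hand side equals $4\alpha^2(4\alpha^2+1)\neq 0$, so tori are excluded, as are the minimal cases (which are not proper). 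There remains only the umbilic case $k_1=k_2=\alpha$, where $|A|^2=2\alpha^2$ and the condition reads $4\alpha^2(\alpha^2-2)=0$, i.e.\ $\alpha^2=2$ and $k_1=k_2=\sqrt 2$. This is precisely the small hypersphere $\s^2(1/\sqrt 3)$, and therefore $M^2$ is an open part of it.
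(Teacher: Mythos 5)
Your reduction and your end-game are fine, but the crucial step --- constancy of $P=|A|^2$ --- is never actually established, and the gap traces back to a mis-reading of the tangential equation. For a CMC hypersurface the tangential component of $\tau_3(\varphi)$ is $2m\alpha\, A\big(\grad |A|^2\big)$, so for a proper triharmonic surface the tangential equation is $A(\grad P)=0$ \emph{exactly} (this is \eqref{tri-harmonicity-system-explicit}(ii) in the paper), not merely ``$\grad P$ is a principal direction of $A$'', which is the weaker statement you retained. Because of this weakening you are driven into the Gauss--Codazzi ODE analysis, and that analysis is left unfinished: ``in general I would combine the normal equation with the Gauss--Codazzi ODEs to exclude non-constant $k_1$'' is a plan, not an argument, and it is precisely the step your proof needs. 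With the equation read correctly the difficulty evaporates: on $U=\{\grad P\neq 0\}$ one has $A(\grad P)=0$, so $0$ is a principal curvature on $U$; since $m=2$ and the mean curvature is the constant $\alpha$, the principal curvatures on $U$ are the constants $0$ and $2\alpha$, hence $P=4\alpha^2$ is constant on $U$, contradicting $\grad P\neq 0$ there. This is exactly your ``favourable case'', and it is the only case; it is also the paper's argument, carried out inside the proof of Theorem~\ref{Th-non-existence-surfaces-c<=0} (whose first part makes no assumption on $c$) and then invoked for $c=1$ in the proof of Theorem~\ref{Th-structure-surfaces-c>0}.

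Once $P$ is constant, your conclusion is correct and in fact somewhat more self-contained than the paper's: the paper classifies the isoparametric surfaces of $\s^3$ and then cites \cite{Maeta1, Maeta2}, whereas you verify directly that the condition $|A|^4-2|A|^2-4\alpha^2=0$ rules out the Clifford tori (via $\det A=-1$, so $|A|^2=4\alpha^2+2$ and the left-hand side equals $4\alpha^2(4\alpha^2+1)\neq 0$) and, in the umbilical case, forces $\alpha^2=2$, i.e.\ $R=1/\sqrt{3}$; those computations are correct. But as submitted, the proof has a genuine gap at its central step, even though the missing argument was within reach had you computed the tangential part of the triharmonic equation precisely.
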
 
\begin{theorem}\label{Th-compact}
Let $M^m$ be a compact, CMC proper triharmonic hypersurface in $\s^{m+1}$. Then either $|A|^2=2m $ and $M^m=\s^m\left( 1/\sqrt 3 \right )$, or there exists a point $p\in M^m$ such that $m<|A|^2(p)<2m$.
\end{theorem}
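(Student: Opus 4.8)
The plan is to turn the triharmonic condition into a single second--order elliptic (in)equality for the scalar $\rho:=|A|^{2}$ and then to exploit the compactness of $M^{m}$ via the maximum principle. Since $M^m$ is CMC we have $f\equiv\alpha$ and $\tau(\varphi)=m\alpha\,\eta$; using the Weingarten formula $\nabla^{\varphi}_{X}\eta=-AX$ and the Codazzi relation $\operatorname{div}A=m\,\grad f=0$, one computes $\overline{\Delta}\eta=|A|^{2}\eta$, after which the normal and tangential components of $\tau_{3}(\varphi)=0$ can be written explicitly. The normal component takes the form
\begin{equation*}
\Delta\rho+\rho^{2}-m\rho-m^{2}\alpha^{2}+G=0 ,
\end{equation*}
where $\Delta=d^{*}d$ is the nonnegative Laplacian of the induced metric and $G$ gathers the terms carrying $\grad\rho$ and $|\nabla A|^{2}$, so that at a critical point of $\rho$ it reduces to a nonnegative multiple of $|\nabla A|^{2}$; the tangential component gives a first--order relation involving $A\,\grad\rho$. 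That the polynomial is exactly $\rho^{2}-m\rho-m^{2}\alpha^{2}$ is forced by Theorem~\ref{Th-trihar-hypersurfaces-spheres}: when $\nabla A=0$ both $\Delta\rho$ and $G$ vanish and we must recover Maeta's equation.

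First I would record the Cauchy--Schwarz bound $\rho\geq m\alpha^{2}$, with equality precisely at umbilical points. Evaluating the normal equation at a maximum point $p_{\max}$ of $\rho$, where $\grad\rho=0$, $\Delta\rho\geq0$ and $G(p_{\max})\geq0$, gives $\rho_{\max}^{2}-m\rho_{\max}-m^{2}\alpha^{2}\leq0$, i.e.
\begin{equation*}
\rho_{\max}\leq\rho_{0}:=\tfrac{m}{2}\Big(1+\sqrt{1+4\alpha^{2}}\Big).
\end{equation*}
Combining with $m\alpha^{2}\leq\rho_{\max}$ yields $m\alpha^{2}\leq\rho_{0}$, which is equivalent to $\alpha^{2}\leq2$; hence $\rho_{0}\leq2m$ and therefore $|A|^{2}\leq2m$ on all of $M^{m}$.

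Suppose now $\rho\equiv2m$. Then $2m=\rho_{\max}\leq\rho_{0}\leq2m$ forces $\rho_{0}=2m$, i.e. $\alpha^{2}=2$, so that $\rho=2m=m\alpha^{2}$ realises equality in Cauchy--Schwarz. Thus $M^{m}$ is totally umbilical, hence an open part of a small hypersphere, and the value $\alpha^{2}=2$ pins down its radius as $1/\sqrt3$; compactness and connectedness give $M^{m}=\s^{m}(1/\sqrt3)$. If instead $\rho\not\equiv2m$, then $\rho<2m$ somewhere, and it suffices to find a point where moreover $\rho>m$: once $\rho_{\max}>m$ is known, the continuity of the non--constant function $\rho$ on the connected $M^{m}$, together with $\rho\leq2m$, produces by the intermediate value theorem a point $p$ with $m<|A|^{2}(p)<2m$.

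The crux, and the step I expect to be hardest, is precisely the strict lower bound $\rho_{\max}>m$, equivalently the exclusion of $|A|^{2}\leq m$ everywhere; note that the maximum point alone only delivers the upper estimate. I would obtain it from global information: integrating the normal equation (so that $\int_{M}\Delta\rho=0$) expresses $\int_{M}|\nabla A|^{2}$ through $\int_{M}(m\rho+m^{2}\alpha^{2}-\rho^{2})$, and pairing this with a Simons--type identity for $\Delta|A|^{2}$ on CMC hypersurfaces of $\s^{m+1}$, supplemented by the tangential relation, should make $|A|^{2}\leq m$ incompatible with $\alpha\neq0$. The delicate point is controlling the cubic term $\operatorname{tr}(A^{3})$ that enters the Simons identity and is not a function of $\rho$ alone.
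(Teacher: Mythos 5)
Your overall architecture (reduce triharmonicity to a scalar equation for $\rho=|A|^2$, use Cauchy--Schwarz $\rho\geq m\alpha^2$, exploit compactness, finish with umbilicity and the intermediate value theorem) matches the paper's, and your maximum-principle derivation of the pointwise bound $\rho\leq 2m$ is a legitimate variant of the paper's integral argument. But there are two problems, one cosmetic-but-unjustified and one genuine. First, the term $G$ you insert into the normal equation does not exist: for a CMC hypersurface the paper computes (Lemmas~\ref{Lemma-Delta-H} and \ref{Lemma-Delta2-H}) that $\overline{\Delta}^2\mathbf{H}=\alpha\left(\Delta|A|^2+|A|^4\right)\eta+2\alpha\,A\big(\grad|A|^2\big)$, precisely because the only source of derivative-of-$A$ terms, $\trace(\nabla A)(\cdot,\cdot)=m\,\grad f$ (Lemma~\ref{Lemma-tecnico-1}), vanishes when $f\equiv\alpha$. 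Hence the normal component of $\tau_3(\varphi)=0$ is exactly $\Delta\rho+\rho^2-m\rho-m^2\alpha^2=0$, with no $|\nabla A|^2$ or $\grad\rho$ contribution at all (see \eqref{tri-harmonicity-system-explicit}(i) with $c=1$). Your appeal to consistency with Maeta's theorem pins down the polynomial but cannot determine $G$, and the sign claim ``$G\geq0$ at critical points'' is pure assertion; your upper bound happens to survive only because in truth $G\equiv0$.

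The genuine gap is the step you yourself flag as the crux: the existence of a point with $\rho>m$. You leave it unproven and propose a Simons-type identity, worrying about $\operatorname{tr}(A^3)$ --- a detour that is both incomplete (``should make $|A|^2\leq m$ incompatible with $\alpha\neq0$'' is a hope, not an argument) and unnecessary. Once the normal equation is known exactly, the bound is a one-line integration over the compact $M^m$: since $\int_M\Delta\rho\,dV=0$, the equation $\Delta\rho=-\rho^2+m\rho+m^2\alpha^2$ gives $\int_M\rho(\rho-m)\,dV=m^2\alpha^2\,\mathrm{Vol}(M^m)>0$ because $\alpha\neq0$, so $\rho>m$ at some point. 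This is exactly how the paper argues (it also obtains the $2m$ side by integrating $\Delta\rho\leq\rho(2m-\rho)$, where your maximum principle is an acceptable substitute); with that point in hand, your umbilicity analysis in the case $\rho\equiv2m$ (forcing $\alpha^2=2$, equality in Cauchy--Schwarz, hence $\s^m(1/\sqrt3)$) and your connectedness/IVT argument close the proof as in the paper. As written, however, the proposal does not establish the theorem: the decisive lower bound is missing, and the structure of the equation it would be extracted from was guessed rather than computed.
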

The following is an immediate consequence of Theorem\link\ref{Th-compact}:
\begin{corollary}\label{Cor-aggiunto} Let $M^m$ be a compact, CMC proper triharmonic hypersurface in $\s^{m+1}$. If $|A|^2 \geq 2m $, then $|A|^2 = 2m $ and $M^m=\s^m\left( 1/\sqrt 3 \right )$.
\end{corollary}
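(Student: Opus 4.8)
The plan is to derive the corollary directly from Theorem~\ref{Th-compact}. The theorem gives a dichotomy: for a compact, CMC, proper triharmonic hypersurface $M^m$ in $\s^{m+1}$, either $|A|^2=2m$ and $M^m=\s^m(1/\sqrt 3)$, or there is a point $p\in M^m$ at which $m<|A|^2(p)<2m$. The corollary adds the hypothesis $|A|^2\geq 2m$ everywhere and concludes the first alternative holds. So the entire proof is a matter of showing that the second alternative is incompatible with the standing hypothesis $|A|^2\geq 2m$.

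First I would argue by contradiction: suppose the second alternative of Theorem~\ref{Th-compact} were the one that occurs. Then there exists $p\in M^m$ with $|A|^2(p)<2m$. But by hypothesis $|A|^2\geq 2m$ at every point of $M^m$, in particular at $p$, giving $|A|^2(p)\geq 2m$. This directly contradicts $|A|^2(p)<2m$. Hence the second alternative cannot hold, and we are forced into the first alternative of the dichotomy, namely $|A|^2=2m$ and $M^m=\s^m(1/\sqrt 3)$, which is precisely the conclusion of the corollary.

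\begin{proof}
By Theorem~\ref{Th-compact}, exactly one of the following two cases occurs: either $|A|^2=2m$ and $M^m=\s^m(1/\sqrt 3)$, or there exists a point $p\in M^m$ such that $m<|A|^2(p)<2m$. Assume, for the sake of contradiction, that the second case occurs. Then there is a point $p\in M^m$ with $|A|^2(p)<2m$. On the other hand, the hypothesis of the corollary asserts that $|A|^2\geq 2m$ at every point, so in particular $|A|^2(p)\geq 2m$, contradicting $|A|^2(p)<2m$. Therefore the second case of Theorem~\ref{Th-compact} cannot occur, and we conclude that $|A|^2=2m$ and $M^m=\s^m(1/\sqrt 3)$.
\end{proof}

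I do not expect any genuine obstacle here, since the statement is labelled ``an immediate consequence'' of Theorem~\ref{Th-compact}; the only thing to be careful about is correctly negating the strict inequality in the second alternative and matching it against the non-strict hypothesis $|A|^2\geq 2m$, which is exactly the logical pivot of the one-line contradiction.
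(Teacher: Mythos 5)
Your proof is correct and is exactly the argument the paper intends: the corollary is stated as an immediate consequence of Theorem~\ref{Th-compact}, and ruling out the second alternative of the dichotomy against the pointwise hypothesis $|A|^2\geq 2m$ is precisely that one-line deduction.
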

\begin{theorem}\label{Th-structure-hypersurfaces-c>0}
Let $M^m$ be a CMC proper $r$-harmonic  hypersurface in $\s^{m+1}$, $r \geq3$, and assume that $|A|^2$ is constant. Then
\begin{itemize}
\item[\rm (i)] $|A|^2 \in (m,m(r-1)]$, and $|A|^2=m(r-1)$ if and only if $M^m$ is an open part of $\s^m(1/\sqrt r)$.
\item[\rm (ii)] $\alpha^2 \in (0,r-1]$, and $\alpha^2=r-1$ if and only if $M^m$ is an open part of $\s^m(1/\sqrt r)$.
\end{itemize}
\end{theorem}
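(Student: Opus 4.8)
The plan is to specialize the general $r$-harmonic (Euler--Lagrange) equation for the isometric immersion $\varphi:M^m\hookrightarrow\s^{m+1}$ to the case at hand and to show that it collapses to a single scalar identity in the two constants $|A|^2$ and $\alpha^2$. The starting computational observation is that, for a CMC hypersurface, $\overline{\Delta}\eta=|A|^2\eta$: writing $\nabla^\varphi_{e_i}\eta=-Ae_i$ and expanding the rough Laplacian \eqref{roughlaplacian}, the tangential part is $\sum_i(\nabla_{e_i}A)e_i=\grad(\trace A)=m\,\grad f$, which vanishes since $f\equiv\alpha$ is constant (here one uses the Codazzi equation in a space form, which makes $\nabla A$ totally symmetric), while the normal part is $\sum_i|Ae_i|^2\eta=|A|^2\eta$. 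Because $|A|^2$ is assumed constant, the scalar $|A|^2$ passes through $\overline{\Delta}$, and one obtains $\overline{\Delta}^k\tau=m\alpha\,|A|^{2k}\eta$ for every $k\ge0$, where $\tau=\tau(\varphi)=m\alpha\eta$.

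Next I would feed this into the $r$-harmonic operator, the only further ingredient being the ambient curvature $R^N(X,Y)Z=\langle Y,Z\rangle X-\langle X,Z\rangle Y$ of $\s^{m+1}$. Since tracing $R^N$ against $d\varphi$ sends a normal field $g\eta$ to the normal field $mg\eta$, and since the first covariant derivative $\nabla^\varphi_{e_i}\tau=-m\alpha\,Ae_i$ is tangential, every curvature contribution again produces a constant multiple of $\eta$; the powers of $\alpha^2$ enter precisely through the factors $\trace A=m\alpha$ that are generated whenever a tangential term $Ae_i$ is paired back against $\eta$. Hence the equation $\tau_r(\varphi)=0$ splits into a tangential part, which is built from $A(\grad f)$ and $A(\grad |A|^2)$ and so vanishes identically once $f$ and $|A|^2$ are constant, and a normal part, which is a scalar \emph{master equation}
\begin{equation*}
P_r\big(|A|^2,\alpha^2\big)=0 ,
\end{equation*}
a polynomial relation between the two constants (for $r=3$ this is the relation $|A|^4-m|A|^2-m^2\alpha^2=0$ of Theorem~\ref{Th-trihar-hypersurfaces-spheres}). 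Carrying out this reduction cleanly---organising the iterated Laplacians and the curvature terms of the $r$-harmonic operator and checking that no covariant derivatives of $A$ survive---is the main obstacle and the technical heart of the argument.

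Granting the master equation, I would solve it for $\alpha^2$ as a function of the normalised quantity $u:=|A|^2/m$, writing $\alpha^2=g_r(u)$, and establish the qualitative properties $g_r(1)=0$, $g_r$ strictly increasing on $[1,r-1]$, $g_r(r-1)=r-1$, and $g_r(u)\le u\iff u\le r-1$; the case $r=3$ gives $g_3(u)=u^2-u$, which already exhibits all of these. The conclusion then follows from the Cauchy--Schwarz inequality $|A|^2=\sum_i\kappa_i^2\ge\frac1m\big(\sum_i\kappa_i\big)^2=m\alpha^2$, i.e. $\alpha^2\le u$, with equality if and only if $M^m$ is totally umbilic. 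Indeed, properness gives $\alpha\neq0$, so $\alpha^2=g_r(u)>0$ forces $u>1$, that is $|A|^2>m$; combining $\alpha^2=g_r(u)$ with $\alpha^2\le u$ yields $g_r(u)\le u$, hence $u\le r-1$, i.e. $|A|^2\le m(r-1)$, and then $0<\alpha^2=g_r(u)\le g_r(r-1)=r-1$. Equality in either bound forces $\alpha^2=u$, i.e. total umbilicity, so $M^m$ is an open part of a small hypersphere whose radius is pinned by the master equation to $1/\sqrt r$, namely $\s^m(1/\sqrt r)$---which conversely is totally umbilic with $|A|^2=m(r-1)$, $\alpha^2=r-1$ and is proper $r$-harmonic by the results recalled in \secref{Intro} (see \cite{BMOR1, Mont-Ratto4}). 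This proves (i) and (ii).
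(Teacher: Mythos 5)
Your surrounding structure matches the paper's proof: the computation $\overline{\Delta}\eta=|A|^2\eta$ and its consequence $\overline{\Delta}^{k}\tau=m\,\alpha\,|A|^{2k}\eta$ are exactly the paper's \eqref{Delta-eta} and \eqref{Delta-H-potenza-p}, and your endgame --- Cauchy--Schwarz $|A|^2\geq m\alpha^2$ with umbilicity rigidity, properness giving $\alpha\neq 0$ hence $|A|^2>m$, and the citation of \cite{BMOR1, Mont-Ratto4} to pin the radius to $1/\sqrt r$ --- is essentially the paper's argument for (i) and (ii), modulo your repackaging of part (ii) as monotonicity of $g_r$ instead of solving the quadratic for $|A|^2$.

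However, there is a genuine gap at the decisive step: you never produce the master equation $P_r(|A|^2,\alpha^2)=0$, and you explicitly defer it as ``the main obstacle and the technical heart of the argument.'' This is not a routine omission, because the entire quantitative content of the theorem --- the endpoints $m(r-1)$ and $r-1$ --- is encoded in the exact coefficient of the $\alpha^2$-term. The paper establishes (Theorems~\ref{Th-existence-hypersurfaces-c>0} and \ref{Th-existence-hypersurfaces-c>0-e-c<0}) that the relation is $|A|^4-m|A|^2-(r-2)m^2\alpha^2=0$, by substituting $\overline{\Delta}^{p}{\mathbf H}=\alpha|A|^{2p}\eta$ into Maeta's formulas \eqref{2s-tension}--\eqref{2s+1-tension} and counting the curvature summands: each of the $r-2$ of them (the $2(s-1)$ terms of the $\ell$-sum when $r=2s$, plus the extra final term when $r=2s+1$) contributes $m^3\alpha^3|A|^{2r-6}\eta$ after tracing against \eqref{tensor-curvature-N(c)}. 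Without this count, your asserted properties of $g_r$ --- in particular $g_r(r-1)=r-1$ and $g_r(u)\le u\iff u\le r-1$ --- are postulates rather than conclusions: they do hold for the true $g_r(u)=(u^2-u)/(r-2)$, but they would fail for any other coefficient $c(r)$ with $c(3)=1$, and your only verification is the case $r=3$, which cannot distinguish $(r-2)$ from such alternatives. To close the gap you should either quote the paper's Theorem~\ref{Th-existence-hypersurfaces-c>0} (as the paper's own proof does, via \eqref{r-harmonicity-condition-in-spheres}) or carry out the trace computation above; once the coefficient $(r-2)$ is in hand, the rest of your argument goes through as written.
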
 
Theorem\link\ref{Th-structure-hypersurfaces-c>0} describes the structure of a certain family of proper $r$-harmonic hypersurfaces of the Euclidean sphere. The fact that this family contains several significant examples will be illustrated by means of the following result:
\begin{theorem}\label{Th-existence-hypersurfaces-c>0}
Let $M^m$ be a non-minimal CMC hypersurface in $\s^{m+1}$ and assume that $|A|^2$ is constant. Then
$M^m$ is proper $r$-harmonic ($r \geq 3$) if and only if
\begin{equation}\label{r-harmonicity-condition-in-spheres}
|A|^4-m|A|^2-(r-2)m^2 \alpha^2=0 \,.
\end{equation}
\end{theorem}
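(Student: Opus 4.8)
The plan is to reduce the $r$-harmonicity system to a single scalar equation by exploiting the two constancy hypotheses. The cornerstone is a computation of the iterated rough Laplacians of the tension field $\tau(\varphi)=m\alpha\,\eta$. Using the Weingarten formula $\nabla^{\varphi}_{X}\eta=-AX$, the Gauss formula and the Codazzi equation (which in a space form forces $\nabla A$ to be totally symmetric, so that $\trace(\nabla A)=m\grad f$), one finds
\begin{equation*}
\overline{\Delta}\,\eta=m\grad f+|A|^2\,\eta .
\end{equation*}
The hypothesis $f\equiv\alpha$ kills the tangential term, giving $\overline{\Delta}\,\eta=|A|^2\eta$; and since $|A|^2$ is constant, $\overline{\Delta}$ commutes with multiplication by it, so iteration yields
\begin{equation*}
\overline{\Delta}^{\,k}\tau(\varphi)=m\alpha\,|A|^{2k}\,\eta ,\qquad \nabla^{\varphi}_{e_i}\overline{\Delta}^{\,k}\tau(\varphi)=-m\alpha\,|A|^{2k}\,Ae_i ,
\end{equation*}
for every $k\ge 0$. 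In other words $\eta$ is an eigenfield of $\overline{\Delta}$ with constant eigenvalue $|A|^2$, and the only scalar invariants that will survive the subsequent contractions are $|A|^2=\sum_i|Ae_i|^2$ and $\trace A=\sum_i\langle Ae_i,e_i\rangle=m\alpha$.

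Next I would substitute these identities into the explicit expression of the $r$-tension field $\tau_r(\varphi)$ for a map into a target of constant curvature (as computed by Wang and Maeta). Since the ambient curvature tensor of $\s^{m+1}$ is $R(X,Y)Z=\langle Y,Z\rangle X-\langle X,Z\rangle Y$, every curvature operator in $\tau_r(\varphi)$ collapses to an algebraic contraction: it acts as multiplication by $m$ on the normal fields $\overline{\Delta}^{\,k}\tau$, while the contractions that involve the tangential fields $\nabla^{\varphi}_{e_i}\overline{\Delta}^{\,k}\tau$ reduce, through $\trace A=m\alpha$, to multiples of $m^2\alpha^2$. This makes $\tau_r(\varphi)$ fully computable and splits it into a normal part proportional to $\eta$ and a tangential part. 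The tangential part is a combination of terms proportional to $Ae_j$ whose coefficients involve only $\grad f$ and $\grad|A|^2$; under the two constancy hypotheses these vanish, so $r$-harmonicity is equivalent to the vanishing of the normal component alone. (It is precisely this point that allows one to weaken the hypothesis $\nabla A=0$ of Theorem~\ref{Th-trihar-hypersurfaces-spheres} to the mere constancy of $|A|^2$.)

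The heart of the proof is the evaluation of this normal component, which has the shape $m\alpha\,P_r(|A|^2,\alpha^2)\,\eta$ for some polynomial $P_r$. I expect the main obstacle to lie exactly here: one has to organise the many curvature terms of $\tau_r(\varphi)$ and show that, once the reduction formulas above are inserted, they combine so that $r$-harmonicity is governed precisely by the quadratic relation \eqref{r-harmonicity-condition-in-spheres}. The linear dependence on $r$ — the coefficient $r-2$ — should emerge as the number of intermediate iterated-Laplacian slots, each contributing a curvature term of size $m^2\alpha^2=|\tau(\varphi)|^2$, while the remaining terms assemble into $|A|^4-m|A|^2$. Because $M^m$ is non-minimal and CMC we have $m\alpha\neq0$, so the vanishing of the normal component is equivalent to \eqref{r-harmonicity-condition-in-spheres}; conversely, if \eqref{r-harmonicity-condition-in-spheres} holds then the normal component vanishes and $M^m$ is $r$-harmonic, and it is automatically proper since $\alpha\neq0$.

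As consistency checks I would verify that \eqref{r-harmonicity-condition-in-spheres} reproduces the known thresholds: for $r=2$ it gives $|A|^2=m$, recovering Theorem~\ref{Th-bihar-hypersurfaces-spheres}, and for $r=3$ it gives $|A|^4-m|A|^2-m^2\alpha^2=0$, recovering Theorem~\ref{Th-trihar-hypersurfaces-spheres}. Finally, it is satisfied by the small hypersphere $\s^m(1/\sqrt r)$, for which $|A|^2=m(r-1)$ and $\alpha^2=r-1$, in agreement with Theorem~\ref{Th-structure-hypersurfaces-c>0}.
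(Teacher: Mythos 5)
Your proposal is correct and follows essentially the same route as the paper: the paper proves the general statement (Theorem\link\ref{Th-existence-hypersurfaces-c>0-e-c<0}) by establishing $\overline{\Delta}\eta=|A|^2\eta$ and hence $\overline{\Delta}^p{\mathbf H}=\alpha|A|^{2p}\eta$, then substituting into the Wang--Maeta formulas \eqref{2s-tension}, \eqref{2s+1-tension} with the space-form curvature \eqref{tensor-curvature-N(c)}, where the tangential contractions vanish and each of the $r-2$ curvature cross terms contributes $m^2\alpha^2$ (namely $2(s-1)$ terms for $r=2s$ and $2(s-1)+1$ for $r=2s+1$), exactly as you predict. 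The only difference is that where you anticipate the assembly of terms as the ``main obstacle,'' the paper simply carries out the bookkeeping explicitly for $r=2s$, obtaining $\tau_{2s}(\varphi)=m\alpha|A|^{4s-6}\bigl\{|A|^4-mc|A|^2-(2s-2)m^2c\alpha^2\bigr\}\eta$, and notes the odd case is analogous.
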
 
\begin{remark} The special case $r=3$ in Theorem~\ref{Th-existence-hypersurfaces-c>0} is a slight improvement of Theorem\link\ref{Th-trihar-hypersurfaces-spheres}. We also observe that the conclusions of Theorems\link\ref{Th-non-existence-hypersurfaces-c<=0} and \ref{Th-existence-hypersurfaces-c>0} remain true for $r=2$ even without the hypothesis $|A|^2$ equal to a constant.
\end{remark} 
In order to illustrate the main applications of Theorem\link\ref{Th-existence-hypersurfaces-c>0}, it is now necessary to recall some basic facts about \textit{isoparametric hypersurfaces} of the Euclidean sphere. These hypersurfaces have a long, beautiful history which started in 1918 with the work of Laura and continued with the contributions of several authors, amongst which we cite E. Cartan, Abresch, Ozeki-Takeuchi, M\"{u}nzner, Stolz, Chi and, more recently, Miyaoka (we refer to \cite{Abresch, Baird-book, Chi, Miyaoka, Miyaoka2, Munzner, Munzner2, Nomizu, Oz-Tak1, Oz-Tak2, Stolz} and references therein). A smooth function $F:\s^{m+1} \to \R$ is called \textit{isoparametric} if both $|\nabla F|^2$ and $\Delta F$ are functions of $F$. Isoparametric hypersurfaces are the regular level sets of isoparametric  functions. Each isoparametric hypersurface is CMC and has $\ell$ distinct constant principal curvatures $k_1,\ldots,k_\ell$ with constant multiplicities $m_1,\ldots,m_\ell$, $m_1+\ldots +m_\ell=m$. Moreover, the only possible values for $\ell$ are $1,2,3,4,6$, and $m_{i+2}=m_i$, so that there are at most two distinct multiplicities, which we shall denote $m_1,m_2$. The integer $\ell$ is called the \textit{degree} of the isoparametric hypersurface. Any isoparametric function $F$ has image $F(\s^{m+1})=[-1,1]$. In order to state our results, it is convenient to describe a family of parallel isoparametric hypersurfaces as follows. As in \cite{Baird-book}, we define $M_s=F^{-1}(\cos \ell s)$, where $0<s< \pi/\ell$. Geometrically, $s$ represents the affine parameter such that $\nabla F / |\nabla F| = \nabla s$, and so $\nabla s$ is normal to $M_s$.
In the cases $\ell=1,2$ the isoparametric hypersurfaces $M_s$ are small hyperspheres and generalised Clifford tori respectively. These cases have been thoroughly studied, by different methods, in \cite{BMOR1, Maeta2, Mont-Ratto4, Mont-Ratto5}. Therefore, we focus on the remaining cases $\ell=3,4,6$. By a suitable choice of the orientation we can assume $m_1 \leq m_2$ and we have the following explicit description of the principal curvatures of $M_s$, $s \in (0,\pi/\ell)$:
\begin{equation}\label{principal-curvatures}
k_i(s)= \cot \left( s +\frac{(i-1)\pi}{\ell} \right ), \quad i=1,\ldots,\ell\,.
\end{equation} 
For a fixed $s$, the distributions on $M_s$ corresponding to the $k_i$'s are integrable and the leaves are totally umbilical in $\s^{m+1}$, i.e., they are small spheres of dimension $m_i$
and radius $\sin(s+(i-1) \pi/\ell)$, $i=1,\ldots,\ell$. Moreover, these leaves are totally geodesic in $M_s$.

The non-existence of proper biharmonic isoparametric hypersurfaces of degree $\ell=3,4,6$ was proved in \cite{Urakawa-no-bihar}. Our results will extend this to the cases $r=3,4$, but, on the other hand, the instances described in Example\link\ref{Ex-r=5} will enable us to conclude that there exist proper $r$-harmonic isoparametric hypersurfaces of degree $\ell=4$ for all $ r\geq 5$. Finally, we point out that the analysis of \cite{Urakawa-no-bihar} shows that, when $\ell=3,4$ or $6$, $|A|^2 >m$. This implies that, in this context, a solution of \eqref{r-harmonicity-condition-in-spheres} cannot be minimal. 
\vspace{1mm}

\textbf{Case $\ell=3$.} There exist only four examples of this type, corresponding to $m_1=m_2=1,2,4,8$. Our result is:
\begin{theorem}\label{Th-isoparametric-p=3} Let $M_s$, $0<s<\pi/3$, be a family of parallel isoparametric hypersurfaces of degree $3$ in $\s^{m+1}$. Then
\begin{itemize}
\item[{\rm (i)}] if $2 \leq r \leq 19$, the family $M_s$ does not contain any proper $r$-harmonic hypersurface;
\item[{\rm (ii)}] if $r \geq 20$, the family $M_s$ contains four (two geometrically distinct) proper $r$-harmonic hypersurfaces.
\end{itemize}
\end{theorem}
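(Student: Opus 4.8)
The plan is to reduce everything to the single algebraic condition \eqref{r-harmonicity-condition-in-spheres} of Theorem~\ref{Th-existence-hypersurfaces-c>0}, since each $M_s$ is isoparametric and hence automatically CMC with constant $|A|^2$. For degree $\ell=3$ the four examples all have equal multiplicities $m_1=m_2=m_3=p$ with $p\in\{1,2,4,8\}$, so $m=3p$, and the principal curvatures are $k_i=\cot\!\left(s+(i-1)\pi/3\right)$, $i=1,2,3$. First I would record the key observation that, writing $t=\cot(3s)$, the triple-angle identity $\cot(3\theta)=(\cot^3\theta-3\cot\theta)/(3\cot^2\theta-1)$ shows that $k_1,k_2,k_3$ are exactly the three roots of the cubic $x^3-3t\,x^2-3x+t=0$. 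Vieta's formulas then give $\sum_i k_i=3t$ and $\sum_{i<j}k_ik_j=-3$, whence
\[
\alpha=f=\frac1m\sum_i m_ik_i=\frac{p}{3p}(3t)=t,\qquad |A|^2=\sum_i m_ik_i^2=p\Big((3t)^2-2(-3)\Big)=p(9t^2+6).
\]

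Next I would substitute these two expressions into \eqref{r-harmonicity-condition-in-spheres}. The pleasant feature is that the multiplicity $p$ cancels completely, and solving the resulting equation for $r$ yields
\[
r \;=\; G(t)\;:=\;11+9t^2+\frac{2}{t^2},
\]
valid on $(0,\pi/3)$ with $t=\cot(3s)$ ranging over all of $\R$ and $t\neq0$ (the excluded value $t=0$, i.e.\ $s=\pi/6$, being the minimal member of the family; note also $|A|^2-m=p(9t^2+3)>0$, so no solution is minimal, in agreement with the remark that $|A|^2>m$). Thus $M_s$ is proper $r$-harmonic precisely when $G(t)=r$. Setting $u=t^2>0$, the function $u\mapsto 11+9u+2/u$ is strictly convex and attains its unique minimum at $u=\sqrt2/3$, with minimal value
\[
r_{\min}=11+6\sqrt2\approx 19.485.
\]

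The decisive point --- and the real content of the statement --- is the elementary but crucial inequality $19<11+6\sqrt2<20$. From it the two assertions follow at once. For $3\le r\le 19$ we have $r<r_{\min}$, so $G(t)=r$ has no solution and the family contains no proper $r$-harmonic hypersurface; the case $r=2$ is handled separately through Theorem~\ref{Th-bihar-hypersurfaces-spheres}, since $|A|^2=p(9t^2+6)>3p=m$ rules out biharmonicity. For $r\ge 20$ we have $r>r_{\min}$, so $9u+2/u=r-11$ has exactly two positive roots $u_1<u_2$; each $u_i$ gives $t=\pm\sqrt{u_i}$, and as $\cot(3s)=t$ has a unique solution for $3s\in(0,\pi)$, we obtain four distinct values of $s$ in $(0,\pi/3)$, hence four proper $r$-harmonic hypersurfaces (the roots are distinct for every \emph{integer} $r\ge20$, equality $u_1=u_2$ occurring only at the non-integer $r_{\min}$).

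Finally, for the refinement ``two geometrically distinct'' I would use the symmetry $s\mapsto\pi/3-s$. A short computation shows that this involution sends the set $\{k_i\}$ to $\{-k_i\}$, equivalently $t\mapsto -t$ (it reverses the chosen unit normal), pairing the four solutions into the two congruent pairs $\{+\sqrt{u_i},-\sqrt{u_i}\}$ indexed by $u_1$ and $u_2$, which carry distinct values of $|A|^2$. I expect the only step requiring genuine care to be this congruence claim: it rests on the standard symmetry of the degree-$3$ Cartan isoparametric foliation (the ambient isometry identifying $M_s$ with $M_{\pi/3-s}$) rather than on the analytic computation above, which, once the cubic and Vieta's relations are in hand, is entirely routine.
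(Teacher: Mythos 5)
Your proposal is correct, and although it shares the paper's overall skeleton---both arguments reduce the statement to the algebraic condition \eqref{r-harmonicity-condition-in-spheres} of Theorem~\ref{Th-existence-hypersurfaces-c>0} (legitimate here since each $M_s$ is CMC with constant $|A|^2$) and then count roots of a quadratic in a squared variable---your algebraic route is genuinely different and arguably cleaner. The paper computes $|A|^2$ and $\alpha^2$ directly from \eqref{principal-curvatures}, reduces by brute-force trigonometric identities to \eqref{eq-key-isoprametric-p=3}, substitutes $x=\cos(6s)$, and inspects the roots \eqref{radici-p=3} of $rx^2+14x+22-r=0$; the discriminant $r^2-22r+49$, whose larger root is $11+6\sqrt2$, encodes the same threshold you found, but the cutoff at $r=19/20$ and the membership of $x_1,x_2$ in $(-1,1)$ must be checked by ``routine inspection''. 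Your substitution $t=\cot(3s)$, with $k_1,k_2,k_3$ identified via the triple-angle identity as the roots of $x^3-3tx^2-3x+t=0$ and Vieta giving $\alpha=t$, $|A|^2=p(9t^2+6)$, eliminates the trigonometric labor; solving for $r=G(t)=11+9t^2+2/t^2$ makes the critical value $r_{\min}=11+6\sqrt2\approx19.485$, and hence the integer cutoff, completely transparent, and positivity of the two roots $u_1<u_2$ of $9u^2-(r-11)u+2=0$ is automatic (positive sum and product), so each yields $t=\pm\sqrt{u_i}$ and a unique $s\in(0,\pi/3)$ by injectivity of $\cot$ on $(0,\pi)$---no root-location check needed. (The two parametrizations match under $x=(t^2-1)/(t^2+1)$, which turns the paper's quadratic into your $9t^4+(11-r)t^2+2=0$.) Your handling of $r=2$ via $|A|^2=p(9t^2+6)>m$ and Theorem~\ref{Th-bihar-hypersurfaces-spheres} is also fine and parallels the paper's appeal to the known nonexistence result. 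The one step you defer rather than prove is the congruence of the pairs $\{M_s, M_{\pi/3-s}\}$: the paper settles it in one line by observing that the Cartan polynomial $F$ has odd degree $3$, so the antipodal map carries $M_s=F^{-1}(\cos 3s)$ onto $M_{\pi/3-s}=F^{-1}(-\cos 3s)$. You were right to flag this as the delicate point---the ``standard symmetry'' you invoke is precisely this antipodal argument and genuinely depends on $\ell$ being odd, since for $\ell=6$ the analogous pairs need not be congruent (Miyaoka's result, cited in the proof of Theorem~\ref{Th-isoparametric-p=6}); your additional remark that the two pairs carry distinct values of $|A|^2$, hence cannot be congruent to each other, is a correct complement that the paper leaves implicit.
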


\textbf{Case $\ell=6$.} It is known that in this case necessarily $m_1=m_2$ and the possible values are $m_1=1$ or $m_1=2$. There exist precisely two \textit{homogeneous} examples of this type, with $m_1=1$ and $m_1=2$ respectively. If $m_1=1$, there exists no non-homogeneous examples. By contrast, when $m_1=2$ the existence of \textit{non-homogeneous} instances is still an open problem (see \cite{Siffert}). Our result is:
\begin{theorem}\label{Th-isoparametric-p=6} Let $M_s$, $0<s<\pi/6$, be a family of parallel isoparametric hypersurfaces of degree $6$ in $\s^{m+1}$. Then
\begin{itemize}
\item[{\rm (i)}] if $2 \leq r \leq 109$, the family $M_s$ does not contain any proper $r$-harmonic hypersurface;
\item[{\rm (ii)}] if $r \geq 110$, the family $M_s$ contains four proper $r$-harmonic hypersurfaces.
\end{itemize}
\end{theorem}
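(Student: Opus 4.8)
\emph{Proof plan.} The plan is to reduce the $r$-harmonicity of a member $M_s$ of the family to a single algebraic equation in the mean curvature, and then to count its admissible solutions. Since $\ell=6$ forces $m_1=m_2$, all six multiplicities coincide and $m=6m_1$; moreover $M_s$ is isoparametric, hence CMC with $|A|^2$ constant, so Theorem~\ref{Th-existence-hypersurfaces-c>0} applies and (invoking the Remark following it, which also covers $r=2$) the hypersurface $M_s$ is proper $r$-harmonic if and only if \eqref{r-harmonicity-condition-in-spheres} holds.

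First I would express $\alpha$ and $|A|^2$ as explicit functions of $s$. Starting from \eqref{principal-curvatures} with $\ell=6$ and the equally spaced angles $\theta_j=s+j\pi/6$, I would use the classical identity $\sum_{j=0}^{5}\cot(s+j\pi/6)=6\cot(6s)$ together with the one obtained by differentiating it, $\sum_{j=0}^{5}\csc^2(s+j\pi/6)=36\csc^2(6s)$. Since all weights equal $m_1=m/6$, these yield
\begin{equation*}
\alpha=\cot(6s),\qquad |A|^2=m\bigl(5+6\alpha^2\bigr).
\end{equation*}
In particular $|A|^2\geq 5m>m$ throughout the family, so no solution of \eqref{r-harmonicity-condition-in-spheres} can be minimal; equivalently, every solution automatically has $\alpha\neq0$ and is therefore proper.

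Next I would substitute these two closed forms into \eqref{r-harmonicity-condition-in-spheres}. After dividing by $m^2$ and setting $x=\alpha^2$, the condition collapses to the quadratic
\begin{equation*}
P(x)=36x^2+(56-r)x+20=0 ,
\end{equation*}
which is, notably, independent of $m$, so the conclusion is uniform over both degree-$6$ families $m_1=1,2$. The question becomes: for which $r$ does $P$ admit positive roots, and how many? Vieta's relations do the bookkeeping: the product of the roots is $20/36>0$, so they always share a sign, while their sum is $(r-56)/36$. Hence positive roots occur precisely when $r>56$ and the discriminant $(56-r)^2-2880$ is nonnegative. As $\sqrt{2880}=24\sqrt5\approx 53.67$, this forces $r\geq 56+24\sqrt5\approx 109.67$, i.e. $r\geq 110$. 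For $2\leq r\leq 109$ there is therefore no positive root (the sum is nonpositive when $r\leq 56$, the discriminant is negative when $57\leq r\leq 109$), which proves (i). For $r\geq 110$ the discriminant is strictly positive --- it already equals $36$ at $r=110$ --- so $P$ has two distinct positive roots $x_1\neq x_2$. Each yields $\alpha=\pm\sqrt{x_i}$, hence four distinct values of $\alpha$, and since $\alpha=\cot(6s)$ is a decreasing bijection of $(0,\pi/6)$ onto $\R$, these correspond to four distinct parameters $s$ and thus to the four proper $r$-harmonic hypersurfaces of (ii).

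The computational heart of the argument --- and the step I expect to demand the most care --- is the passage to the closed forms $\alpha=\cot(6s)$ and $|A|^2=m(5+6\alpha^2)$: it relies on summing the squared cotangents in \eqref{principal-curvatures} correctly and on the equality $m_1=m_2$, which lets the common multiplicity factor out of both the trace and $|A|^2$. Once these are in place, the analysis of $P$ is entirely elementary; the only delicate point is the sharp arithmetic threshold $24\sqrt5$, which is what pins the cutoff at $r=110$ rather than at a neighbouring integer.
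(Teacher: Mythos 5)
Your proof is correct, and it reaches the paper's conclusion by a genuinely different computational route. The paper substitutes \eqref{principal-curvatures} directly into \eqref{r-harmonicity-condition-in-spheres} and reduces the resulting trigonometric expression to \eqref{eq-key-isoprametric-p=6}, i.e.\ the quadratic $rx^2+32x+112-r=0$ in $x=\cos(12s)$, whose roots must then be checked by inspection to lie in $(-1,1)$; the four hypersurfaces arise there because each admissible value of $\cos(12s)$ has two preimages $s\in(0,\pi/6)$. You instead use the summation identity $\sum_{j=0}^{5}\cot(s+j\pi/6)=6\cot(6s)$ and its derivative to obtain the closed forms $\alpha=\cot(6s)$ and $|A|^2=m(5+6\alpha^2)$, which turn \eqref{r-harmonicity-condition-in-spheres} into $36y^2+(56-r)y+20=0$ in $y=\alpha^2$. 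The two quadratics are equivalent: setting $x=(y-1)/(y+1)$ (the expression of $\cos(12s)$ in terms of $\cot^2(6s)$) one finds $(y+1)^2\bigl(rx^2+32x+112-r\bigr)=4\bigl(36y^2+(56-r)y+20\bigr)$, and both yield the same sharp threshold $r\geq 56+24\sqrt{5}$, hence $r\geq 110$. Your route buys three things: admissibility of roots is automatic, since any $y>0$ corresponds, via the decreasing bijection $\alpha=\cot(6s)$ of $(0,\pi/6)$ onto $\R$, to exactly one $s$ for each sign of $\alpha$, so the count of four solutions is immediate rather than read off from two cosine preimages; Vieta's relations replace the paper's ``routine inspection'' of where the roots lie; and the independence of the answer from $m_1$ is visible from the outset. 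Your check that $|A|^2\geq 5m>m$ (equivalently, that $P(0)=20\neq 0$, so $\alpha\neq 0$ at any root) correctly guarantees that every solution is non-minimal, so Theorem\link\ref{Th-existence-hypersurfaces-c>0} applies and all solutions found are proper; this mirrors the paper's appeal to \cite{Urakawa-no-bihar} for the same point.
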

\textbf{Case $\ell=4$.} This is the richest case. The only examples with $m_1=m_2$ occur when $m_1=m_2=1$ or $m_1=m_2=2$. But we have examples with $m_1\neq m_2$ provided that $m_1=4$ and $m_2=5$, or $1+m_1+m_2$ is a multiple of $2^{\xi(m_1-1)}$, where $\xi(n)$ denotes the number of natural numbers $p$ such that $1 \leq p \leq n$ and $p \equiv 0,1,2,4 \,({\rm mod}\, 8)$. Let $C_m$ be the skew-symmetric Clifford algebra over $\R$ generated by the canonical basis $\left \{e_1,\ldots,e_m \right \}$ of $\R^m$ subject to the only constraint
\[
e_i e_j+e_j e_i = -2\delta_{ij} {\rm I}\,,
\]
where ${\rm I}$ is the identity element of $C_{m}$.
A more explicit description of the pairs $(m_1,m_2)$ which can occur is the following (see \cite{Chi}):
\[
(m_1,m_2)=(m,k \delta_m -m-1)\,,
\]
where $\delta_m$ denotes the dimension of an irreducible  module of $C_m$ and the positive integers $m,k$ are such that the second entry is positive. 
A full classification of the corresponding isoparametric hypersurfaces is available except in the case $(m_1,m_2)=(7, 8)$. Finally, for future reference, we point out that the following pairs of multiplicities occur: $m_1=1,m_2\geq 2$; $m_1=2,m_2=2k,k\geq 2$; $m_1=3,m_2=4$; $m_1=4,m_2=4k-5,k\geq 3$. Our results are:
\begin{theorem}\label{Th-isoparametric-p=4} Let $M_s$, $0<s<\pi/4$, be a family of parallel isoparametric hypersurfaces of degree $4$ in $\s^{m+1}$ and assume that $m_1=m_2$. Then
\begin{itemize}
\item[{\rm (i)}] if $2 \leq r \leq 41$, the family $M_s$ does not contain any proper $r$-harmonic hypersurface;
\item[{\rm (ii)}] if $r \geq 42$, the family $M_s$ contains four proper $r$-harmonic hypersurfaces.
\end{itemize}
\end{theorem}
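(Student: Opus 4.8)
The plan is to apply Theorem~\ref{Th-existence-hypersurfaces-c>0} directly. Since $M_s$ is isoparametric, its principal curvatures are constant, hence $M_s$ is a CMC hypersurface with $|A|^2$ constant, and it is non-minimal precisely when $\alpha\neq 0$. Thus $M_s$ is proper $r$-harmonic if and only if $\alpha\neq 0$ and \eqref{r-harmonicity-condition-in-spheres} holds. The first step is therefore to compute $\alpha$ and $|A|^2$ as explicit functions of $s$. Using \eqref{principal-curvatures} with $\ell=4$ and the equal multiplicities $m_1=m_2=m/4$ (so that $m=4m_1$ and all four multiplicities equal $m/4$), I would group the principal curvatures into the pairs $\{k_1,k_3\}$ and $\{k_2,k_4\}$ and repeatedly apply the identity $\cot\phi-\tan\phi=2\cot 2\phi$. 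This telescoping gives $\sum_i k_i=4\cot 4s$ and $\sum_i k_i^2=16\cot^2 4s+12$, whence
\begin{equation*}
\alpha=\cot 4s, \qquad |A|^2=m\,(4\cot^2 4s+3)=m\,(4\alpha^2+3).
\end{equation*}

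Next I would substitute these into \eqref{r-harmonicity-condition-in-spheres}. Writing $t=\alpha^2=\cot^2 4s\geq 0$ and $|A|^2=m(4t+3)$, the factor $m^2$ cancels and the $r$-harmonicity condition collapses to the single quadratic
\begin{equation*}
16\,t^2+(22-r)\,t+6=0.
\end{equation*}
Since $M_s$ is non-minimal exactly when $t>0$ (note $t=0$ only at $s=\pi/8$, where $\alpha=0$ and $M_s$ is minimal), the problem reduces to a purely algebraic one: for which integers $r\geq 3$ does this quadratic admit a root $t>0$, and how many parameters $s\in(0,\pi/4)$ correspond to such a root?

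For the root analysis I would use Vieta's formulas: the product of the roots is $6/16=3/8>0$, so the two roots always have the same sign, while their sum equals $(r-22)/16$. Hence a positive root exists if and only if the sum is positive \emph{and} the discriminant $(22-r)^2-384$ is nonnegative, i.e. $r>22$ and $|r-22|\geq 8\sqrt 6\approx 19.6$. Together these force $r\geq 22+8\sqrt 6\approx 41.6$, yielding for integer $r$ exactly the dichotomy: $2\leq r\leq 41$ gives no positive root and hence no proper $r$-harmonic member, while $r\geq 42$ gives two distinct positive roots $t_1,t_2$. To conclude, I would translate roots back to hypersurfaces: since $s\mapsto\cot 4s$ is a strictly decreasing bijection of $(0,\pi/4)$ onto $\R$, each positive root $t_i$ produces exactly two parameters via $\cot 4s=\pm\sqrt{t_i}$, giving four distinct values of $s$, hence four proper $r$-harmonic hypersurfaces (all non-minimal, and in fact $|A|^2=m(4t_i+3)>m$, consistent with the remark that a solution cannot be minimal). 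The main obstacle is Step~1, namely securing the clean closed forms $\alpha=\cot 4s$ and $|A|^2=m(4\alpha^2+3)$, since it is precisely this trigonometric reduction that turns the geometric condition into an elementary quadratic; after that, the only delicate point is the boundary integer $r=42$, where the discriminant equals $400-384=16>0$, so the two roots are genuinely distinct and the count of four is exact.
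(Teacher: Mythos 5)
Your proof is correct and follows essentially the same route as the paper: both substitute the principal curvatures \eqref{principal-curvatures} with $\ell=4$ and $m_1=m_2$ into the criterion \eqref{r-harmonicity-condition-in-spheres} of Theorem~\ref{Th-existence-hypersurfaces-c>0}, reduce to a quadratic, and extract the threshold $r\geq 22+8\sqrt{6}\approx 41.6$, i.e.\ $r\geq 42$, with the same two-to-one parameter count giving four hypersurfaces. The only cosmetic difference is the choice of variable: you use $t=\alpha^2=\cot^2 4s$ and the quadratic $16t^2+(22-r)t+6=0$, while the paper uses $x=\cos 8s$ and $rx^2+20x+44-r=0$, which are equivalent under $t=(1+x)/(1-x)$.
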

\begin{theorem}\label{Th-isoparametric-p=4-m1-diverso-m2} Let $M_s$, $0<s<\pi/4$, be a family of parallel isoparametric hypersurfaces of degree $4$ in $\s^{m+1}$ and assume that $m_1< m_2$. Let $b=m_2/m_1$ and define 
\begin{eqnarray}\label{Pol-grado-4-p=4}\nonumber
P_{b,r}(y)&=&y^4 \left(b^2 r+2 b r+r\right)+y^3 \left(-3 b^2 r-4 b^2-4 b r-r+4\right)+y^2 \left(3 b^2 r+12 b^2+2 b r-2 b\right)\\
&&+y \left(-b^2 r-12 b^2+2 b\right)+4 b^2 \,.
\end{eqnarray}
Then the family $M_s$ contains a proper $r$-harmonic hypersurface if and only if the fourth order polynomial $P_{b,r}(y)$ admits a root $y^* \in (0,1)$, and to any such root corresponds a proper $r$-harmonic hypersurface $M_s$ with $s=(1/2)\arccos \left (\sqrt {y^*} \right )$. 

Moreover, there exist two natural numbers $r^{**}\geq r^* \geq 5$ such that
\begin{itemize}
\item[{\rm (i)}] if $2 \leq r < r^*$, the family $M_s$ does not contain any proper $r$-harmonic hypersurface;
\item[{\rm (ii)}] if $r \geq r^*$, the family $M_s$ contains at least one proper $r$-harmonic hypersurface;
\item[{\rm (iii)}] if $r \geq r^{**}$, the family $M_s$ contains four proper $r$-harmonic hypersurfaces.
\end{itemize}
\end{theorem}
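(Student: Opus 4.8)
The plan is to apply Theorem~\ref{Th-existence-hypersurfaces-c>0}. Since every isoparametric hypersurface has constant principal curvatures, each member $M_s$ is automatically CMC with $|A|^2$ constant; hence it is proper $r$-harmonic precisely when it is non-minimal and condition \eqref{r-harmonicity-condition-in-spheres} holds. The first step is therefore to read off $|A|^2$ and $\alpha$ from the explicit curvatures \eqref{principal-curvatures} with $\ell=4$. Writing $t=2s\in(0,\pi/2)$ and using $\cot(\theta+\pi/2)=-\tan\theta$ together with $\cot\theta-\tan\theta=2\cot 2\theta$, the curvatures of multiplicity $m_1$ are $\cot s,-\tan s$ and those of multiplicity $m_2$ are $\cot(s+\pi/4),-\tan(s+\pi/4)$, whence
\[
|A|^2=4m_1\cot^2 t+4m_2\tan^2 t+m,\qquad m\alpha=2m_1\cot t-2m_2\tan t,
\]
where $m=2(m_1+m_2)$.

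Next I would introduce $y=\cos^2 t=\cos^2(2s)\in(0,1)$, so that $\cot^2 t=y/(1-y)$ and $\tan^2 t=(1-y)/y$; the map $s\mapsto y$ is a decreasing bijection of $(0,\pi/4)$ onto $(0,1)$ with inverse $s=\tfrac12\arccos\sqrt{y}$. Setting $b=m_2/m_1$, one obtains
\[
|A|^2-m=\frac{4m_1\big(y^2+b(1-y)^2\big)}{y(1-y)},\qquad
(m\alpha)^2=\frac{4m_1^2\big((1+b)y-b\big)^2}{y(1-y)}.
\]
Substituting these into \eqref{r-harmonicity-condition-in-spheres}, rewritten as $|A|^2(|A|^2-m)=(r-2)(m\alpha)^2$, and clearing the common factor $y(1-y)$, I expect to recover exactly $P_{b,r}(y)=0$ up to a positive constant; as a sanity check the leading coefficient is $(1+b)^2 r=b^2r+2br+r$, matching \eqref{Pol-grado-4-p=4}. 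The minimal member corresponds to $(1+b)y-b=0$, i.e. $y=y_0:=b/(1+b)$, and there the numerator $2N\big(2N+(1+b)D\big)$ (with $N=y^2+b(1-y)^2$, $D=y(1-y)$) is strictly positive, so $P_{b,r}(y_0)\neq0$. Thus every root $y^*\in(0,1)$ of $P_{b,r}$ yields a genuinely non-minimal, proper $r$-harmonic hypersurface $M_s$ with $s=\tfrac12\arccos\sqrt{y^*}$, which establishes the characterization in the first assertion.

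For the quantitative part I would recast the condition as $r-2=G(y)$, where
\[
G(y)=\frac{|A|^2(|A|^2-m)}{(m\alpha)^2}=\frac{2N\big(2N+(1+b)D\big)}{D\,\big((1+b)y-b\big)^2},
\]
with $N,D$ as above. This form shows that $G$ is smooth and strictly positive on $(0,1)\setminus\{y_0\}$ and that $G(y)\to+\infty$ as $y\to0^+$, $y\to y_0$, $y\to1^-$ (the first and last because $D\to0$, the middle because $(1+b)y-b\to0$ while $N$ stays positive). Hence $(0,1)$ splits into $(0,y_0)$ and $(y_0,1)$, on each of which $G$ attains a finite minimum $G_1^*$ and $G_2^*$. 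Counting roots reduces to intersecting the graph of $G$ with the line at height $r-2$: if $r-2<\min(G_1^*,G_2^*)$ there is no root; if $r-2\geq\min(G_1^*,G_2^*)$ the intermediate value theorem gives at least one; and if $r-2>\max(G_1^*,G_2^*)$ each subinterval contributes at least two, so there are at least four, hence exactly four, since $P_{b,r}$ has degree $4$ with positive leading coefficient. Defining $r^*$ (resp. $r^{**}$) as the least integer with $r-2\geq\min(G_1^*,G_2^*)$ (resp. $r-2>\max(G_1^*,G_2^*)$) gives $r^{**}\geq r^*$ and the trichotomy (i)--(iii). Note that unimodality of $G$ is not needed: the degree bound already caps the count at four.

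Finally, to get $r^*\geq5$ it suffices to prove $\min(G_1^*,G_2^*)>2$, i.e. $G>2$ throughout. Writing $w=\cot^2 t=y/(1-y)\in(0,\infty)$ and normalising $m_1=1,\ m_2=b$ (legitimate since $G$ is invariant under $m_i\mapsto\lambda m_i$), one finds $G=\dfrac{2(w^2+b)\big(2w^2+(1+b)w+2b\big)}{w(w-b)^2}$, and $G>2$ is equivalent to
\[
(w^2+b)\big(2w^2+(1+b)w+2b\big)-w(w-b)^2=2w^4+bw^3+6bw^2+bw+2b^2>0,
\]
which holds for all $w>0$ because every coefficient is positive. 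Since $G>2$ pointwise and the minima are attained in the interior, both $G_1^*,G_2^*$ exceed $2$, so $r=2,3,4$ produce no root and $r^*\geq5$. The main obstacle I anticipate is purely organisational: carrying out the expansion that identifies the cleared equation with $P_{b,r}$, and, above all, arranging the blow-up behaviour of $G$ at the three points $0,y_0,1$ so that the root count is watertight; it is the degree-$4$ ceiling together with the positive-coefficient identity above that make the argument close cleanly.
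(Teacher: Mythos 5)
Your proposal is correct, and its first half coincides with the paper's argument: the paper likewise invokes Theorem~\ref{Th-existence-hypersurfaces-c>0}, substitutes $y=\cos^2(2s)$ into \eqref{r-harmonicity-condition-in-spheres} and identifies the cleared equation with $P_{b,r}(y)=0$ up to the positive factor $4m_1^2/y$. Your compact form $P_{b,r}(y)=2N\bigl(2N+(1+b)D\bigr)-(r-2)D\bigl((1+b)y-b\bigr)^2$, with $N=y^2+b(1-y)^2$ and $D=y(1-y)$, checks out coefficient by coefficient against \eqref{Pol-grado-4-p=4}, and your value $P_{b,r}(y_0)=2N(2N+(1+b)D)\bigl|_{y_0}=6b^2/(1+b)^2$ agrees with the paper, so the non-minimality of any root and the first assertion are established exactly as in the paper. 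Where you genuinely diverge is the quantitative part. The paper works in the $s$-variable with $T_r(s)=|A|^4-m|A|^2-(r-2)m^2\alpha^2$, sets ${\mathcal R}^*={\rm Inf}\,X$ for the set $X$ of parameters $r$ where $\min T_r<0$, proves ${\mathcal R}^*\geq 4$ via the monotonicity $dP_{b,r}/db>0$ for $0<r\leq 4$, and then needs a separate compactness/subsequence argument both to upgrade this to the strict inequality ${\mathcal R}^*>4$ and to produce an interior zero of $T_{{\mathcal R}^*}$; the count of four solutions is obtained by splitting at the minimal member $s^*$ and repeating the construction on each subinterval. You instead analyze the single rational function $G(y)=\bigl(|A|^4-m|A|^2\bigr)/(m^2\alpha^2)$ — which is precisely ${\mathcal R}(y)-2$ for the function ${\mathcal R}(y)=Q_b(y)/R_b(y)$ that the paper only introduces afterwards, in subsection~\ref{Remark-four-solutions}, for upper bounds on $r^*$ and $r^{**}$ — and read off the whole trichotomy from its blow-up at $0$, $y_0=b/(1+b)$, $1$, the intermediate value theorem, and the degree-$4$ cap on the number of roots. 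Your route buys two simplifications: the positive-coefficient identity $(w^2+b)\bigl(2w^2+(1+b)w+2b\bigr)-w(w-b)^2=2w^4+bw^3+6bw^2+bw+2b^2>0$ (which I verified) gives $G>2$ pointwise, hence ${r^*\geq 5}$ with strictness for free since the minima $G_1^*,G_2^*$ are attained at interior points, eliminating both the $b$-monotonicity lemma and the limiting argument of the paper; and your computation $|A|^2-m=4m_1N/D>0$ independently reproves the fact $|A|^2>m$, which the paper imports from the biharmonic nonexistence analysis of Ichiyama--Inoguchi--Urakawa, so the $r=2$ case of statement {\rm (i)} is covered without that citation. The only cosmetic hedge is your ``I expect to recover exactly $P_{b,r}$'': that expansion is exactly right, and carrying it out is no more than the ``straightforward computation'' the paper itself waves at.
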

\begin{example}\label{Ex-r=5} We observe that $P_{b,r}(0)=4 b^2$ and $P_{b,r}(1)=4$. It follows that, in most examples, there is an even (possibly zero) number of roots in $(0,1)$. For instance, performing a numerical analysis with the software Mathematica${}^{\footnotesize \textregistered} $, we were able to check that, when $m_1=1$ and $ m_2=10000$, there are at least two solutions for all $r\geq r^*= 5$, and $r^{**}=312919$. This shows that, in general, the estimate $r^* \geq 5$ in Theorem\link\ref{Th-isoparametric-p=4-m1-diverso-m2} is sharp. In subsection\link\ref{Remark-four-solutions} we shall provide some upper bound for $r^{*}$ and $r^{**}$.
\end{example}
\vspace{2mm}

\subsection{The definition of Eells and Sampson} The notion of $r$-harmonicity which we have used in this paper represents, both from the geometric and the analytic point of view, a convenient approach to the study of higher order versions of the classical energy functional. On the other hand, we point out that the first idea of studying higher order energies was formulated in a different way. More precisely, in 1965 Eells and Sampson (see \cite{ES}) proposed the following functionals which we denote $E^{ES}_r(\varphi)$ to remember these two outstanding mathematicians:
\begin{equation}\label{ES-energia}
    E_r^{ES}(\varphi)=\frac{1}{2}\int_{M}\,|(d^*+d)^r (\varphi)|^2\,dV\,\, .
\end{equation}
Again, harmonic maps are trivially absolute minima for the functionals \eqref{ES-energia}. Therefore, we say that $\varphi$ is a \textit{proper} $ES-r$-\textit{harmonic map} if it is a \textit{not} harmonic critical point of $ E_r^{ES}(\varphi)$. The study of \eqref{ES-energia} was suggested again in \cite{EL83}, but so far not much is known about this subject. 
The functionals $E^{ES}_r(\varphi)$ and $E_r(\varphi)$ coincide when $r=2,3$ and the difference between them appears when we consider the case $r=4$, where we have:
\begin{equation*}\label{ES-4-energia}
E^{ES}_4(\varphi)= \frac{1}{2} \int_M |d^2(\tau(\varphi)) |^2 \,dV +E_4(\varphi)\,.
\end{equation*}
As observed in \cite{EL83}, $d^2(\tau(\varphi)) $ is a $2$-form with values in $\varphi^{-1}TN$ which can be described as follows:
\begin{equation}\label{d2-description}
d^2(\tau(\varphi))(X,Y)=R^N \big(d\varphi(X),d\varphi(Y)\big )\tau(\varphi) \,.
\end{equation}
In general, aside from the special cases where $\dim M=1$ or the target $N$ is flat, the curvature term $d^2(\tau(\varphi))$ does not vanish and gives rise to difficulties which increase as $r$ does.
In particular, by contrast to the case of $E_r(\varphi)$, the explicit derivation of the Euler-Lagrange equation for the Eells-Sampson functionals $E^{ES}_r(\varphi)$ seems, in general, a very complicated task. These problems are explained in detail in the recent paper \cite{BMOR1}, where the Euler-Lagrange equation of the functional $E^{ES}_4(\varphi)$ was computed. 

Nevertheless, in the context of the present paper, we can establish a significant relationship between $r$-harmonicity and $ES-r$-harmonicity. That is achieved proving the following rather general result, where, in a standard way, $\overline{\Delta}^0$ denotes the identity operator:
\begin{theorem}\label{Th-Es-r-e-r-harmonicity-general}
Let $\varphi:(M^m,g) \to  N^{n}(c)$ be any smooth map from a Riemannian manifold $(M^m,g)$ into a space form. Let  $r\geq 4$ and assume that
\begin{equation}\label{Ipotesi-Deltatau-ortogonale}
\overline{\Delta}^i \tau(\varphi) (x) \perp d\varphi(T_x M) \quad \forall x \in M, \quad i=0, \ldots, r-4\,. 
\end{equation}
Then $\varphi$ is $r$-harmonic if and only if it is $ES-r$-harmonic.
\end{theorem}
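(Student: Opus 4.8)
The strategy is to sidestep the (still unknown) explicit Euler--Lagrange equation of $E_r^{ES}$ and to work instead with the \emph{difference} of the two functionals at the level of first variations. The plan is to write $E_r^{ES}(\varphi)=E_r(\varphi)+D_r(\varphi)$, where $D_r$ is a curvature--correction functional, and then to prove that not only $D_r(\varphi)$ but also its first variation vanish at any $\varphi$ satisfying \eqref{Ipotesi-Deltatau-ortogonale}. The equivalence follows at once, since the critical--point equations of $E_r^{ES}$ and of $E_r$ then coincide at such a $\varphi$.

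To construct $D_r$ I would expand $\Phi_r:=(d^*+d)^r\varphi$ by form--degree in $\Omega^*(\varphi^{-1}TN)$. Since $\nabla d\varphi$ is symmetric one has $d(d\varphi)=0$, while the second exterior covariant derivative of a section is the curvature action $d^2\omega=R^N(d\varphi\cdot,d\varphi\cdot)\omega$ of \eqref{d2-description}. An induction on $r$ then shows that $\Phi_r$ splits into a \emph{main chain} --- equal to $\pm\,\overline{\Delta}^{s-1}\tau(\varphi)$ in degree $0$ when $r=2s$, and to $\pm\, d\,\overline{\Delta}^{s-1}\tau(\varphi)$ in degree $1$ when $r=2s+1$ --- together with higher--degree \emph{curvature branches}, each created by a $d\circ d$ step and hence carrying, as an innermost factor, a term $R^N(d\varphi\cdot,d\varphi\cdot)\overline{\Delta}^i\tau(\varphi)$. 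As forms of different degree are pointwise orthogonal, $|\Phi_r|^2$ is the squared norm of the main chain plus the squared norms of the branches, and integrating the latter defines $D_r$. Inserting the space--form identity $R^N(X,Y)Z=c\big(\langle Y,Z\rangle X-\langle X,Z\rangle Y\big)$ makes every branch explicit, and $R^N(d\varphi X,d\varphi Y)\overline{\Delta}^i\tau(\varphi)$ vanishes as soon as $\overline{\Delta}^i\tau(\varphi)\perp d\varphi(T_xM)$; checking that the indices $i$ occurring in the branches are covered by \eqref{Ipotesi-Deltatau-ortogonale} gives $D_r(\varphi)=0$. For $r=4$ this already concludes, because $D_4=\tfrac12\int_M|d^2\tau(\varphi)|^2\ge 0$ for \emph{every} map, so the $\varphi$ where it vanishes is a global minimum and hence automatically critical; for $r\ge 6$ this nonnegativity is lost, since branches can descend back into the chain degree via $d^*$, and one must argue through the first variation.

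For that first variation I would compute $\frac{d}{dt}D_r(\varphi_t)\big|_{t=0}$ along an arbitrary variation with field $V$ and split each branch--term into the piece in which $V$ differentiates a curvature factor and the piece in which it differentiates a chain factor. In the first piece an undifferentiated factor $R^N(d\varphi\cdot,d\varphi\cdot)\overline{\Delta}^i\tau(\varphi)$ survives and is zero by \eqref{Ipotesi-Deltatau-ortogonale}, so it drops out. In the second piece I would integrate by parts, transferring the operators accumulated on the branch back onto the chain factor; this reassembles a factor $R^N(d\varphi\cdot,d\varphi\cdot)\overline{\Delta}^j\tau(\varphi)$ now acting on the chain, again vanishing by \eqref{Ipotesi-Deltatau-ortogonale}. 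Hence $\frac{d}{dt}D_r(\varphi_t)\big|_{t=0}=0$ for all $V$, and the proof is complete.

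The main obstacle is precisely the degree-- and order--bookkeeping in this last step: one must tag every branch by the index $i$ of the $\overline{\Delta}^i\tau(\varphi)$ it carries and verify that integration by parts in the first variation never exposes an index larger than $r-4$ --- the extreme case arising when the deepest chain factor $\overline{\Delta}^{s-1}\tau(\varphi)$ is paired, after transferring the branch operators, against a term $R^N(d\varphi\cdot,d\varphi\cdot)\overline{\Delta}^{r-4}\tau(\varphi)$. This is exactly why the full range $i=0,\dots,r-4$ of \eqref{Ipotesi-Deltatau-ortogonale} is needed. The explicit space--form curvature is what keeps the computation tractable, since it guarantees that every branch stays a simple contraction of $R^N$ with a single $\overline{\Delta}^i\tau(\varphi)$ rather than a more entangled curvature expression.
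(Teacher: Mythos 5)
Your proposal follows essentially the same route as the paper's own proof: the degree-by-degree expansion of $(d^*+d)^r\varphi$, the identification of your ``curvature branches'' with the paper's \emph{good} forms (innermost factor $d^2\overline{\Delta}^i\tau(\varphi)=R^N(d\varphi\cdot,d\varphi\cdot)\overline{\Delta}^i\tau(\varphi)$, killed by \eqref{Ipotesi-Deltatau-ortogonale} via \eqref{tensor-curvature-N(c)}), the first-variation argument in which each differentiated term is paired against a factor vanishing at $t=0$, and the integration by parts that converts the cross terms $\langle\overline{\Delta}^{s-1}\tau,\alpha_0\rangle$ into pairings of good forms with the index bound $r-4$. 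The only real deviations are minor: your packaging as $E_r^{ES}=E_r+D_r$ and the nonnegativity shortcut at $r=4$ are nice but cosmetic (and cross terms already appear at $r=5$, not $r=6$), while, unlike the paper, you leave the noncompact case --- compactly supported variations and the vanishing of boundary terms in the integration by parts --- implicit.
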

The following corollary of the previous theorem is very important in our context:
\begin{corollary}\label{Cor-Es-r-e-r-harmonicity} Let $\varphi:M^m \to  N^{m+1}(c)$ be a CMC hypersurface with $|A|^2$ equal to a constant. Then $\varphi$ is $r$-harmonic if and only if it is $ES-r$-harmonic.
\end{corollary}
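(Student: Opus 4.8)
The plan is to derive Corollary \ref{Cor-Es-r-e-r-harmonicity} directly from Theorem \ref{Th-Es-r-e-r-harmonicity-general}. First I would dispose of the cases $r=2,3$: as recalled above, $E^{ES}_r$ and $E_r$ coincide for these values, so $r$-harmonicity and $ES$-$r$-harmonicity are literally the same condition and nothing is to prove. Hence assume $r\geq 4$. By Theorem \ref{Th-Es-r-e-r-harmonicity-general} it then suffices to verify the orthogonality hypothesis \eqref{Ipotesi-Deltatau-ortogonale}, i.e.\ that $\overline{\Delta}^i\tau(\varphi)$ is everywhere normal to $d\varphi(T_xM)$ for $i=0,\dots,r-4$. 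Since $\varphi$ is an isometric immersion, $\tau(\varphi)=m\mathbf{H}=m\alpha\,\eta$, which is already normal; the whole problem is thus to show that applying $\overline{\Delta}$ repeatedly preserves the normal direction.

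The key step is the identity $\overline{\Delta}\eta=|A|^2\eta$, valid for a CMC hypersurface of a space form. I would obtain it from the Weingarten relation $\nabla^\varphi_{e_i}\eta=-Ae_i$ together with the Gauss formula $\nabla^\varphi_{e_i}X=\nabla^M_{e_i}X+\langle Ae_i,X\rangle\eta$ for tangent $X$. Plugging these into the definition \eqref{roughlaplacian} of $\overline{\Delta}$ and working at a point with a geodesic frame, one finds
\[
\overline{\Delta}\eta=\sum_{i=1}^m\nabla^M_{e_i}(Ae_i)+|A|^2\eta ,
\]
where the tangential term $\sum_i\nabla^M_{e_i}(Ae_i)$ is the divergence of the shape operator. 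By the Codazzi equation, which in a space form has no normal contribution, this divergence equals $m\,\grad f$; the CMC assumption $f\equiv\alpha$ therefore makes it vanish, leaving $\overline{\Delta}\eta=|A|^2\eta$.

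With this in hand I would argue by induction that $\overline{\Delta}^i\tau(\varphi)=m\alpha\,(|A|^2)^i\,\eta$ for every $i\geq 0$. The base case is $\tau(\varphi)=m\alpha\,\eta$. For the inductive step, since $|A|^2$ is constant, $(|A|^2)^i$ is a constant scalar, and for a constant $c$ one has $\overline{\Delta}(c\,\eta)=c\,\overline{\Delta}\eta=c\,|A|^2\eta$; this yields the next iterate with the expected constant coefficient. Consequently each $\overline{\Delta}^i\tau(\varphi)$ is a (constant) multiple of $\eta$, hence orthogonal to $d\varphi(T_xM)$ at every point, so \eqref{Ipotesi-Deltatau-ortogonale} holds for all $i$ --- in particular for $i=0,\dots,r-4$. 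Theorem \ref{Th-Es-r-e-r-harmonicity-general} now gives the equivalence of $r$-harmonicity and $ES$-$r$-harmonicity.

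The main obstacle is the identity $\overline{\Delta}\eta=|A|^2\eta$, and more precisely the vanishing of its tangential part: this is exactly where both hypotheses enter, the space-form assumption guaranteeing that the Codazzi tensor is symmetric with no normal term, and the CMC assumption killing $\grad f$. The constancy of $|A|^2$ is then what allows the induction to close with constant coefficients, since otherwise $\overline{\Delta}\big((|A|^2)^i\eta\big)$ would produce additional terms involving $\grad\,|A|^2$ and $\nabla^\varphi\eta$ that would spoil normality.
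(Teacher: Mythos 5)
Your proposal is correct and follows essentially the same route as the paper: the paper's proof also verifies hypothesis \eqref{Ipotesi-Deltatau-ortogonale} by invoking the identity $\overline{\Delta}^i \tau(\varphi)=m\,\alpha\,|A|^{2i}\,\eta$ (its formula \eqref{Delta-H-potenza-p}, itself obtained from $\overline{\Delta}\eta=|A|^2\eta$ exactly as you derive it) and then applies Theorem\link\ref{Th-Es-r-e-r-harmonicity-general}. Your explicit treatment of the trivial cases $r=2,3$ and your clean induction on $i$ are fine but not substantively different from the paper's argument.
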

We point out that, as a consequence of Corollary\link\ref{Cor-Es-r-e-r-harmonicity}, the conclusion of Theorem\link\ref{Th-existence-hypersurfaces-c>0} holds if, in its statement, the word $r$-harmonic is replaced by $ES-r$-harmonic. The same occurs for our results on isoparametric hypersurfaces and, in particular, \textit{all the proper $r$-harmonic hypersurfaces obtained in Theorems\link\ref{Th-isoparametric-p=3}, \ref{Th-isoparametric-p=6}, \ref{Th-isoparametric-p=4} and \ref{Th-isoparametric-p=4-m1-diverso-m2} are also proper $ES-r$-harmonic. }
\section{Proof of the first general results}\label{Sec-proofs}
In this section we prove Theorems~\ref{Th-non-existence-surfaces-c<=0}--\ref{Th-existence-hypersurfaces-c>0}. As a preliminary step, in the following lemma we state without proof some standard facts which we shall use in this section.
\begin{lemma}\label{Lemma-tecnico-1} Let $\varphi:M^m \to  N^{m+1}(c)$ be a hypersurface. Let $A$ denote the shape operator and $f=(1/m) \trace A$ the mean curvature function. Then
\begin{itemize}
\item[{\rm (a)}] $(\nabla A) (\cdot,\cdot)$ is symmetric;
\item[{\rm (b)}] $\langle (\nabla A) (\cdot,\cdot), \cdot \rangle$ is totally symmetric;
\item[{\rm (c)}] $\trace (\nabla A) (\cdot,\cdot)= m \grad f$. 
\end{itemize}
\end{lemma}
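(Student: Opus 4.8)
The plan is to derive all three statements from the fundamental equations of hypersurface theory, the only genuinely essential input being that the ambient space is a space form. Throughout I would write $(\nabla A)(X,Y)=(\nabla_X A)Y$ for tangent vector fields $X,Y$ on $M^m$, and abbreviate $T(X,Y,Z)=\langle (\nabla_X A)Y,Z\rangle$, so that the three claims translate into: (a) $T(X,Y,Z)=T(Y,X,Z)$; (b) $T$ is totally symmetric; (c) $\sum_j(\nabla_{e_j}A)e_j=m\,\grad f$.

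For (a) I would invoke the Codazzi equation, which for tangent $X,Y,Z$ reads
\[
\langle (\nabla_X A)Y-(\nabla_Y A)X,\,Z\rangle=\langle R^N(X,Y)Z,\,\eta\rangle\,.
\]
Since $N^{m+1}(c)$ is a space form, its curvature tensor satisfies $R^N(X,Y)Z=c\big(\langle Y,Z\rangle X-\langle X,Z\rangle Y\big)$, which is tangent to $M^m$ whenever $X,Y,Z$ are; hence the right-hand side vanishes. As $Z$ is arbitrary, $(\nabla_X A)Y=(\nabla_Y A)X$, i.e. $T(X,Y,Z)=T(Y,X,Z)$, which is (a). This is the single step where the space-form hypothesis is truly used.

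For (b) I would start from the self-adjointness of the shape operator, $\langle AX,Y\rangle=\langle X,AY\rangle$. Differentiating this identity along a tangent field $W$ and using that the induced connection $\nabla^M$ is metric, one obtains $\langle (\nabla_W A)X,Y\rangle=\langle X,(\nabla_W A)Y\rangle$, so each $\nabla_W A$ is again self-adjoint; in the present notation this is $T(W,X,Y)=T(W,Y,X)$, symmetry in the last two slots. Combining this with the symmetry in the first two slots from (a), and noting that the transpositions $(1\,2)$ and $(2\,3)$ generate the full symmetric group on three letters, I conclude that $T$ is totally symmetric, which is (b).

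Finally, for (c) I would use that the trace, being a metric contraction, commutes with covariant differentiation, so for a local orthonormal frame $\{e_j\}$ and any tangent $X$,
\[
X(\trace A)=\trace(\nabla_X A)=\sum_j T(X,e_j,e_j)\,.
\]
By the total symmetry from (b), $T(X,e_j,e_j)=T(e_j,e_j,X)=\langle (\nabla_{e_j}A)e_j,X\rangle$, whence $X(\trace A)=\big\langle \sum_j(\nabla_{e_j}A)e_j,\,X\big\rangle=\langle \trace(\nabla A),X\rangle$. Since $f=(1/m)\trace A$ gives $\langle m\,\grad f,X\rangle=X(\trace A)$ for every $X$, comparing the two expressions yields $\trace(\nabla A)=m\,\grad f$, which is (c). These are all standard computations and I expect no serious obstacle; the only point demanding care is the correct bookkeeping of the Codazzi curvature term in (a), where the space-form assumption is precisely what makes the normal contribution drop out.
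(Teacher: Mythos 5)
Your proposal is correct and complete: (a) is exactly the Codazzi equation for a hypersurface, with the space-form assumption killing the normal component of $R^N(X,Y)Z$; (b) follows from differentiating the self-adjointness of $A$ using metric compatibility and noting that the transpositions $(1\,2)$ and $(2\,3)$ generate $S_3$; and (c) is the standard contraction argument using $\trace A = mf$. Note that the paper deliberately states Lemma~\ref{Lemma-tecnico-1} \emph{without} proof, calling these ``standard facts,'' so there is no paper argument to compare against --- your write-up supplies precisely the canonical derivation the authors are implicitly relying on, and it is sound in every step.
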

Next, we perform our first computation:
\begin{lemma}\label{Lemma-Delta-H} Let $\varphi:M^m \to  N^{m+1}(c)$ be a hypersurface and denote by $\eta$ the unit normal vector field. Then
\begin{equation}\label{Delta-H-formula}
 \overline{\Delta} {\mathbf H}= (\Delta f + f |A|^2) \eta + 2 A(\grad f ) + m f \grad f \,.
\end{equation}
\end{lemma}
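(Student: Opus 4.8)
The plan is to compute $\overline{\Delta}\mathbf{H}$ directly from the definition \eqref{roughlaplacian} applied to the section $\mathbf{H}=f\eta$ of $\varphi^{-1}TN$, working at an arbitrary point $p$ and choosing a local orthonormal frame $\{e_i\}$ that is \emph{geodesic} at $p$, i.e.\ with $\nabla^M_{e_i}e_j(p)=0$. This kills the second term in \eqref{roughlaplacian} at $p$, so that $\overline{\Delta}\mathbf{H}(p)=-\sum_i \nabla^\varphi_{e_i}\nabla^\varphi_{e_i}(f\eta)$, and the whole computation reduces to differentiating $f\eta$ twice along the frame and collecting terms.

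First I would expand $\nabla^\varphi_{e_i}(f\eta)=(e_i f)\,\eta+f\,\nabla^\varphi_{e_i}\eta$. The key structural input is the decomposition of $\nabla^\varphi_{e_i}\eta$ into tangent and normal parts: since $\eta$ is a unit normal, the Weingarten formula gives $\nabla^\varphi_{e_i}\eta=-A(e_i)$ (purely tangential, as $\langle\nabla^\varphi_{e_i}\eta,\eta\rangle=0$), and correspondingly for a tangent vector $X$ one has $\nabla^\varphi_{e_i}X=\nabla^M_{e_i}X+\langle A(e_i),X\rangle\,\eta$ via the Gauss formula. Substituting, I get $\nabla^\varphi_{e_i}\mathbf{H}=(e_i f)\eta-fA(e_i)$. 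Then I would differentiate once more along $e_i$, treating the $\eta$-part and the tangential $-fA(e_i)$ part separately and again using Gauss--Weingarten to split each resulting derivative into its tangential and normal components.

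Carrying out the second derivative and summing over $i$, the normal components should assemble into $(\Delta f)\eta$ (from the $\eta$-term, recalling $\Delta f=-\sum_i e_i e_i f$ at $p$ with the geometers' sign convention) plus $f|A|^2\eta$ (from the normal part of $\nabla^\varphi_{e_i}(-fA(e_i))$, since $\langle A(e_i),A(e_i)\rangle$ sums to $|A|^2$), while the tangential components should produce the gradient terms. Here Lemma~\ref{Lemma-tecnico-1} is the essential tool: part (c), $\trace(\nabla A)=m\,\grad f$, converts the $\sum_i(\nabla_{e_i}A)(e_i)$ contribution into $mf\,\grad f$, and the cross terms $\sum_i (e_i f)A(e_i)$ combine with $\sum_i f\nabla^M_{e_i}(A(e_i))$ to yield the $2A(\grad f)$ term after using the symmetry properties (a)--(b) to rewrite $A(\grad f)$ in frame form. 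The main obstacle is purely bookkeeping: one must track signs carefully (the Laplacian sign convention, the minus sign in Weingarten, the minus in \eqref{roughlaplacian}) and correctly separate tangential from normal parts at each stage so that the two distinct tangential contributions coalesce into the single coefficient $2$ in front of $A(\grad f)$ rather than being miscounted. No curvature term of $N$ appears because $\eta$ and the differentiations here involve only the second fundamental form, not the ambient Riemann tensor; that is why $c$ does not enter formula \eqref{Delta-H-formula}.
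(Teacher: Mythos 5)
Your proposal is correct and follows essentially the same route as the paper's own proof: a geodesic frame at $p$, the expansion $\nabla^\varphi_{e_i}{\mathbf H}=(e_i f)\eta - fA(e_i)$ via Weingarten, a second differentiation split into tangential and normal parts by Gauss--Weingarten, and Lemma~\ref{Lemma-tecnico-1} to convert $\trace(\nabla A)$ into $m\,\grad f$. One minor bookkeeping correction: at $p$ one has $\nabla^M_{e_i}\bigl(A(e_i)\bigr)=(\nabla A)(e_i,e_i)$, so that term contributes only $mf\,\grad f$ and is \emph{not} part of the $A(\grad f)$ term, while $2A(\grad f)$ arises purely from the two cross terms $-(e_i f)A(e_i)$ (one from differentiating $(e_i f)\eta$, one from $-fA(e_i)$) by linearity of $A$ alone --- parts (a)--(b) of Lemma~\ref{Lemma-tecnico-1} are not actually needed here, only part (c).
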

\begin{proof} We work with a geodesic frame field $\left \{ X_i \right \}_{i=1}^m$ around an arbitrarily fixed point $p \in M^m$. Also, we simplify the notation writing $\nabla$ for $\nabla^{M}$. Since ${\mathbf H}=f \eta$, around $p$ we have:
\[
\nabla_{X_i}^{\varphi} {\mathbf H}=\nabla_{X_i}^{\perp} {\mathbf H}-A_{{\mathbf H}}(X_i)=\left ( X_i f \right )\eta -f A  \left (X_i \right )\,.
\]
Then, denoting by $B$ the second fundamental form, at $p$ we have:
\begin{eqnarray*}
\nabla_{X_i}^{\varphi}\nabla_{X_i}^{\varphi} {\mathbf H}&=&\left(X_i X_i f \right )\eta- \left( X_i f \right )A  \left (X_i \right )- \left( X_i f \right )A  \left (X_i \right )-f \big( \nabla_{X_i} A  \left (X_i \right )+B\left( X_i, A  \left (X_i \right )\right )\big )\nonumber\\
&=&\left(X_i X_i f \right )\eta- 2\left( X_i f \right )A  \left (X_i \right )-f (\nabla A)(X_i,X_i)-f|A  \left (X_i \right )|^2 \eta \,.
\label{eq:nabla2H}
\end{eqnarray*}
Now, taking the sum over $i$ and using Lemma~\ref{Lemma-tecnico-1}, we obtain \eqref{Delta-H-formula} (note that the sign convention for $\Delta$ and $\overline{\Delta}$ is as in \eqref{roughlaplacian}).
\end{proof}
Next, we assume that the mean curvature function $f$ is constant and we obtain
\begin{lemma}\label{Lemma-Delta2-H} Let $\varphi:M^m \to  N^{m+1}(c)$ be a hypersurface and assume that its mean curvature function $f$ is equal to a constant $\alpha$. Then
\begin{equation}\label{Delta-2-H-formula}
 \overline{\Delta}^2 {\mathbf H}= \alpha \left (\Delta |A|^2 + |A|^4 \right ) \eta + 2 \alpha A \big(\grad |A|^2\big )  \,.
\end{equation}
\end{lemma}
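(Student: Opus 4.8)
The plan is to iterate the computation of Lemma~\ref{Lemma-Delta-H}, now applied one more time to the expression we already have for $\overline{\Delta}\mathbf{H}$. Under the hypothesis $f\equiv\alpha$ constant, formula \eqref{Delta-H-formula} collapses dramatically: the terms $2A(\grad f)$ and $mf\grad f$ vanish, and $\Delta f=0$, so we are left with
\begin{equation*}
\overline{\Delta}\mathbf{H}= \alpha|A|^2\,\eta\,.
\end{equation*}
The task is therefore to compute the rough Laplacian of the section $\alpha|A|^2\eta$ of $\varphi^{-1}TN$. I would set $g:=\alpha|A|^2$, a smooth function on $M^m$, and observe that $g\,\eta$ has exactly the same structure as $\mathbf{H}=f\eta$ but with $g$ in place of $f$. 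This is the key structural observation: the computation in Lemma~\ref{Lemma-Delta-H} never used that $f$ is specifically the mean curvature function — it only used that we were applying $\overline{\Delta}$ to a normal section of the form (function)$\times\eta$. Hence the same derivation yields
\begin{equation*}
\overline{\Delta}(g\,\eta)=\bigl(\Delta g + g\,|A|^2\bigr)\eta + 2A(\grad g) + m\,g\,\grad g\,.
\end{equation*}

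Next I would substitute $g=\alpha|A|^2$ and simplify. Since $\alpha$ is a constant, $\grad g=\alpha\grad|A|^2$ and $\Delta g=\alpha\Delta|A|^2$. The normal component becomes
\begin{equation*}
\bigl(\alpha\Delta|A|^2+\alpha|A|^2\cdot|A|^2\bigr)\eta=\alpha\bigl(\Delta|A|^2+|A|^4\bigr)\eta\,,
\end{equation*}
matching the first term of \eqref{Delta-2-H-formula}. The tangential contributions are $2A(\grad g)=2\alpha A(\grad|A|^2)$, which is precisely the second term of \eqref{Delta-2-H-formula}, together with $m\,g\,\grad g = m\alpha|A|^2\cdot\alpha\grad|A|^2 = m\alpha^2|A|^2\grad|A|^2$. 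For the stated formula to hold, this last term must be absorbed or must vanish, so the one genuine subtlety is accounting for it.

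The resolution — and the step I expect to require the most care — is that the naive reuse of Lemma~\ref{Lemma-Delta-H} is not quite legitimate, because the term $m\,g\,\grad g$ in that lemma arose specifically from $A_{\mathbf{H}}=fA$, i.e.\ from the fact that $\eta$ was paired with the mean curvature function through the shape operator $A_{\mathbf{H}}$. When we instead compute $\overline{\Delta}(g\eta)$ for an arbitrary function $g$ unrelated to the mean curvature, the Weingarten term produces $A_\eta(X_i)=A(X_i)$ rather than $A_{\mathbf H}(X_i)=fA(X_i)$, so the cubic term $m\,g\,\grad g$ is in fact absent and is replaced by the correct linear-in-$A$ contribution. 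Concretely, I would redo the one-line computation $\nabla^\varphi_{X_i}(g\eta)=(X_ig)\eta-g\,A(X_i)$ directly from Weingarten, differentiate once more in a geodesic frame, trace over $i$, and invoke parts~(a) and~(c) of Lemma~\ref{Lemma-tecnico-1} (so that $\sum_i(\nabla A)(X_i,X_i)=m\grad f=0$ since $f\equiv\alpha$). This yields exactly the two terms in \eqref{Delta-2-H-formula} with no spurious cubic term, and the proof is complete. I would write this final clean computation out explicitly rather than citing the previous lemma, precisely to avoid the mismatch between $A_\eta$ and $A_{\mathbf H}$.
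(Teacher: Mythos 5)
Your final computation is correct and coincides with the paper's own proof: the paper likewise reduces to $\overline{\Delta}^2{\mathbf H}=\alpha\,\overline{\Delta}\big(|A|^2\eta\big)$, computes $\nabla^\varphi_{X_i}\big(|A|^2\eta\big)=\big(X_i|A|^2\big)\eta-|A|^2A(X_i)$ in a geodesic frame, differentiates once more, traces, and uses Lemma~\ref{Lemma-tecnico-1} together with the constancy of $f$.

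That said, the diagnostic paragraph in the middle of your writeup is mistaken and should not stand, even though it does not contaminate the computation you ultimately commit to. The term $mf\grad f$ in \eqref{Delta-H-formula} does \emph{not} come from the Weingarten map: for any smooth function $g$ one has $\nabla^\varphi_{X_i}(g\eta)=(X_ig)\eta-g\,A(X_i)$, exactly parallel to the case $g=f$ (in the paper's notation $A_{g\eta}=gA$, so there is no discrepancy of the kind you describe between ``$A_\eta$'' and ``$A_{\mathbf H}$''). The term in question arises instead from the trace of $\nabla A$: differentiating again and applying Lemma~\ref{Lemma-tecnico-1}(c) produces the contribution $g\,\trace(\nabla A)(\cdot,\cdot)=m\,g\,\grad f$. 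Hence the correct general formula is
\begin{equation*}
\overline{\Delta}(g\eta)=\big(\Delta g+g|A|^2\big)\eta+2A(\grad g)+m\,g\,\grad f\,,
\end{equation*}
which for $g=f$ happens to read $mf\grad f$ --- this coincidence is what misled you into guessing the anomalous term would be $m\,g\,\grad g$. In particular, your earlier ``structural observation'' that the proof of Lemma~\ref{Lemma-Delta-H} never uses that $f$ is the mean curvature function is false: it uses it precisely through Lemma~\ref{Lemma-tecnico-1}(c). The upshot is that the ``naive reuse'' of Lemma~\ref{Lemma-Delta-H} is in fact legitimate once the formula is stated with $m\,g\,\grad f$; with $g=\alpha|A|^2$ and $f\equiv\alpha$ constant, that term vanishes because $\grad f=0$ --- exactly the cancellation the paper performs, since its intermediate formula contains $m|A|^2\grad f$, which is then discarded. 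Your closing computation implements this correctly (you invoke $\sum_i(\nabla A)(X_i,X_i)=m\grad f=0$), so the proof is sound; only your explanation of where the cubic term originates is wrong.
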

\begin{proof} Since $f$ is constant, according to Lemma\link\ref{Lemma-Delta-H} we have $\overline{\Delta}{\mathbf H}=\alpha |A|^2 \eta$ and so
\[
\overline{\Delta}^2{\mathbf H}=\alpha \overline{\Delta}\big ( |A|^2 \eta \big)\,.
\]
Now, around $p$:
\[
\nabla_{X_i}^{\varphi} \big ( |A|^2 \eta\big )=\big ( X_i |A|^2 \big )\eta -| A|^2  A\left (X_i \right )\,.
\]
At $p$:
\begin{eqnarray*}
\nabla_{X_i}^{\varphi}\nabla_{X_i}^{\varphi} \big ( |A|^2 \eta\big )&=&\left(X_i X_i |A|^2 \right )\eta- \left( X_i |A|^2 \right )A  \left (X_i \right )\\
&&- \left( X_i |A|^2 \right )A  \left (X_i \right )-|A|^2 \big( \nabla_{X_i} A  \left (X_i \right )+B\left( X_i, A  \left (X_i \right )\right )\big )\,.\\
\end{eqnarray*}
Taking the sum over $i$ and using Lemma~\ref{Lemma-tecnico-1} we find
\[
\overline{\Delta}\big ( |A|^2 \eta \big)=\left (\Delta |A|^2 \right ) \eta+ 2  A \big(\grad |A|^2\big )  +m |A|^2 \grad f + |A|^4 \eta
\]
and, since $f$ is constant, the proof ends immediately.
\end{proof}
Next, we recall the formulas which describe the $r$-tension field of a general map $\varphi:M \to N$ between two Riemannian manifolds (see \cite{Maeta1}):
\begin{eqnarray}\label{2s-tension}
\tau_{2s}(\varphi)&=&\overline{\Delta}^{2s-1}\tau(\varphi)-R^N \left(\overline{\Delta}^{2s-2} \tau(\varphi), d \varphi (e_i)\right ) d \varphi (e_i) \nonumber\\ 
&&  -\, \sum_{\ell=1}^{s-1}\, \left \{R^N \left( \nabla^\varphi_{e_i}\,\overline{\Delta}^{s+\ell-2} \tau(\varphi), \overline{\Delta}^{s-\ell-1} \tau(\varphi)\right ) d \varphi (e_i)  \right .\\ \nonumber
&& \qquad \qquad  -\, \left . R^N \left( \overline{\Delta}^{s+\ell-2} \tau(\varphi),\nabla^\varphi_{e_i}\, \overline{\Delta}^{s-\ell-1} \tau(\varphi)\right ) d \varphi (e_i)  \right \} \,\, ,
\end{eqnarray}
where $\overline{\Delta}^{-1}=0$ and $\{e_i\}_{i=1}^m$ is a local orthonormal frame field tangent to $M$ (the sum over $i$ is not written but understood). Similarly,
\begin{eqnarray}\label{2s+1-tension}
\tau_{2s+1}(\varphi)&=&\overline{\Delta}^{2s}\tau(\varphi)-R^N \left(\overline{\Delta}^{2s-1} \tau(\varphi), d \varphi (e_i)\right ) d \varphi (e_i)\nonumber \\ 
&&  -\, \sum_{\ell=1}^{s-1}\, \left \{R^N \left( \nabla^\varphi_{e_i}\,\overline{\Delta}^{s+\ell-1} \tau(\varphi), \overline{\Delta}^{s-\ell-1} \tau(\varphi)\right ) d \varphi (e_i)  \right .\\ \nonumber
&& \qquad \qquad  -\, \left . R^N \left( \overline{\Delta}^{s+\ell-1} \tau(\varphi),\nabla^\varphi_{e_i}\, \overline{\Delta}^{s-\ell-1} \tau(\varphi)\right ) d \varphi (e_i)  \right \} \\ \nonumber
&& \,-\,R^N \Big( \nabla^\varphi_{e_i}\,\overline{\Delta}^{s-1} \tau(\varphi), \overline{\Delta}^{s-1} \tau(\varphi)\Big ) d \varphi (e_i)\,\,. 
\end{eqnarray}

We shall also need the expression for the sectional curvature tensor field in the special case that $N$ is a space form:
\begin{equation}\label{tensor-curvature-N(c)}
R^{N(c)}(X,Y)Z=c\, \big(\langle Y,Z \rangle X-\langle X,Z \rangle Y \big) \quad \quad  \forall \,X,Y,Z \in C(TN(c)) \,.
\end{equation}

We are now in the right position to prove our first theorem.
\begin{proof}[Proof of Theorem\link\ref{Th-non-existence-surfaces-c<=0}] The $3$-tension field is described by \eqref{2s+1-tension} with $s=1$. In the first part of the proof, for future reference, we do not make any assumption on the dimension $m$ and the curvature $c$. We observe that $\tau(\varphi)=m{\mathbf H}$ and use Lemma\link\ref{Lemma-Delta-H}, Lemma\link\ref{Lemma-Delta2-H} and \eqref{tensor-curvature-N(c)}. We have: 
\begin{eqnarray}\label{Curv-tensor-expr-1}\nonumber
\sum_{i=1}^m R^{N(c)} \left( \overline{\Delta}\tau(\varphi),d \varphi(X_i)\right ) d \varphi (X_i)&=&c\, m \sum_{i=1}^m  \Big \{ \langle  d \varphi (X_i), d \varphi (X_i) \rangle  \overline{\Delta}{\mathbf H}- \langle  d \varphi (X_i), \overline{\Delta}{\mathbf H}  \rangle d \varphi (X_i) \Big \}\\
&=&c\, m \big \{ m \alpha |A|^2 \eta - 0 \big \}=c\, m^2  \alpha |A|^2 \eta \,.
\end{eqnarray}
Similarly, we compute
\begin{equation}\label{Curv-tensor-expr-2}
\sum_{i=1}^m R^{N(c)} \left( \nabla^{\varphi}_{X_i} \tau(\varphi),\tau(\varphi) \right ) d \varphi (X_i)=c\, m^3   \alpha^3  \eta \,.
\end{equation}
Using \eqref{Delta-2-H-formula}, \eqref{Curv-tensor-expr-1} and \eqref{Curv-tensor-expr-2} into \eqref{2s+1-tension} we obtain the explicit expression of the $3$-tension field:
\begin{equation*}\label{tri-tension-field-explicit}
\tau_3(\varphi)=m \alpha \big[\Delta |A|^2 + |A|^4 -m\, c |A|^2 -m^2\, c\, \alpha^2 \big ] \eta + 2m\alpha A \left ( \grad |A|^2 \right ) 
\end{equation*}
Therefore, we conclude that $M^m$ is a triharmonic hypersurface in $N^{m+1}(c)$ if and only if either it is minimal or
\begin{equation}\label{tri-harmonicity-system-explicit}
\left \{
\begin{array}{ll}
{\rm (i)}\quad &\Delta |A|^2 + |A|^4 -m\, c \,|A|^2 -m^2\, c\, \alpha^2=0 \\
\\
{\rm (ii)}\quad & A \left ( \grad |A|^2\right ) =0 \,.
\end{array}
\right .
\end{equation}
From now on, we assume that $M^m$ is not minimal and we use the hypothesis $c \leq 0$. If $\grad |A|^2=0$ on $M^m$, then $\Delta |A|^2=0$. Therefore, from \eqref{tri-harmonicity-system-explicit}(i) we deduce $|A|^4=0$ which implies that $\alpha =0$, i.e., $M^m$ is minimal (a contradiction). Assume now that there exists a point $p_0 \in M^m$ such that $\grad |A|^2(p_0)\neq 0$. Then  $\grad |A|^2\neq 0$ in a neighbourhood $U$ of $p_0$.  We deduce from \eqref{tri-harmonicity-system-explicit}(ii) that $0$ is an eigenvalue of $A$ on $U$. Now we use the assumption that $m=2$. Since $M^2$ is CMC, it is immediate to conclude that the two principal curvatures are constant on $U$. But then also $|A|^2$ is constant on $U$: this contradiction ends the proof.  
\end{proof}
Now, it is convenient to proceed to the
\begin{proof}[Proof of Theorem\link\ref{Th-structure-surfaces-c>0}] From the proof of Theorem\link\ref{Th-non-existence-surfaces-c<=0} we know that $\grad (|A|^2)= 0$, i.e., $|A|^2$ is a constant on $M^2$. It follows that, since $M^2$ is CMC,
\[
k_1+k_2={\rm constant} \quad {\rm and } \quad k_1^2+k_2^2={\rm constant} 
\]
and consequently $M^2$ is an isoparametric surface in $\s^3$. Then $M^2$ is an open part of either a small hypersphere $\s^2(R)$ of $\s^3$, $0<R\leq 1$, or a Clifford torus $\s^1(R_1) \times \s^1(R_2)$, 
$R_1^2+R_2^2=1$. But, since $M^2$ is proper triharmonic, we know from \cite{Maeta1, Maeta2} that the only possibility is that $M^2$ is an open part of $\s^2(1/\sqrt 3)$.
\end{proof}
\begin{proof}[Proof of Theorem\link\ref{Th-compact}] The Cauchy inequality tells us that, for any hypersurface, we have:
\begin{equation}\label{Cauchy-inequality}
|A|^2=\sum_{i=1}^m k_i^2 \geq \frac{\left (\sum_{i=1}^m  k_i\right )^2}{m}=m f^2 \,.
\end{equation}
Applying \eqref{Cauchy-inequality} to \eqref{tri-harmonicity-system-explicit}(i) (with $c=1$) we immediately deduce
\[
\Delta |A|^2 \leq |A|^2 (2m-|A|^2) 
\]
and, integrating on the compact manifold $M^m$,
\[
0 \leq \int_{M^m}|A|^2 (2m-|A|^2) \,dV\,. 
\]
Thus, either $|A|^2=2m$ or there exists a point $p_1 \in M^m$ such that $|A|^2(p_1)<2m$. On the other hand, \eqref{tri-harmonicity-system-explicit}(i) (with $c=1$) implies 
\[
\Delta |A|^2 > |A|^2 (m-|A|^2) 
\]
on $M^m$. Therefore integration yields
\[
0 > \int_{M^m}|A|^2 (m-|A|^2) \,dV\,. 
\]
Thus there exists a point $p_2 \in M^m$ such that $|A|^2(p_2)>m$. Finally, as $M^m$ is connected, by an obvious continuity argument we deduce that, if $|A|^2=2m$ does not hold on $M^m$, then there exists a point $p \in M^m$ such that $m<|A|^2(p)<2m$.

It only remains to prove that, if $|A|^2=2m$ on $M^m$, then $M^m=\s^m(1/\sqrt 3)$. Indeed, if $|A|^2=2m$ on $M^m$, then \eqref{tri-harmonicity-system-explicit}(i) (with $c=1$) implies $|A|^2=m \alpha^2$. But this is equivalent to say that we have equality in \eqref{Cauchy-inequality}, which means that $M^m$ is umbilical. Then $M^m=\s^m(R)$, $0<R\leq1$ and, as $M^m$ is proper triharmonic, we know that the only possibility is $R=1/\sqrt 3$.
\end{proof}
Theorems\link\ref{Th-non-existence-hypersurfaces-c<=0} and \ref{Th-existence-hypersurfaces-c>0} are an immediate consequence of the following general result:
\begin{theorem}\label{Th-existence-hypersurfaces-c>0-e-c<0}
Let $M^m$ be a non-minimal CMC hypersurface in $N^{m+1}(c)$ and assume that $|A|^2$ is constant. Then
$M^m$ is proper $r$-harmonic ($r \geq 3$) if and only if
\begin{equation*}\label{r-harmonicity-condition-in-general}
|A|^4-m\,c\,|A|^2-(r-2)m^2 \,c\, \alpha^2=0 \,.
\end{equation*}
\end{theorem}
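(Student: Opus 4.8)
The plan is to exploit the two standing hypotheses, CMC and $|A|^2$ constant, to reduce every iterate of the rough Laplacian of the tension field to an explicit normal field, and then to substitute this into the general expressions \eqref{2s-tension} and \eqref{2s+1-tension}. First I would record that $\tau(\varphi)=m{\mathbf H}=m\alpha\eta$, so that Lemma~\ref{Lemma-Delta-H} (with $f\equiv\alpha$) gives $\overline{\Delta}\tau(\varphi)=m\alpha|A|^2\eta$, and Lemma~\ref{Lemma-Delta2-H} (using $\Delta|A|^2=0$ and $\grad|A|^2=0$) gives $\overline{\Delta}^2\tau(\varphi)=m\alpha|A|^4\eta$. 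The computation underlying Lemma~\ref{Lemma-Delta2-H} in fact shows that, whenever $f$ is constant, $\overline{\Delta}(g\eta)=(\Delta g+g|A|^2)\eta+2A(\grad g)$ for any smooth $g$; taking $g$ equal to a constant kills the tangential part, and an immediate induction yields the crucial identity
\begin{equation*}
\overline{\Delta}^k\tau(\varphi)=m\alpha|A|^{2k}\eta\qquad\text{for all }k\geq 0.
\end{equation*}

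Next I would record the two facts that every curvature term in \eqref{2s-tension} and \eqref{2s+1-tension} requires: each $\overline{\Delta}^k\tau(\varphi)$ is normal, while $\nabla^\varphi_{e_i}\overline{\Delta}^k\tau(\varphi)=-m\alpha|A|^{2k}A(e_i)$ is tangent, since $\nabla^\varphi_{e_i}\eta=-A(e_i)$ and $|A|^{2k}$ is constant. Combining the space-form curvature \eqref{tensor-curvature-N(c)} with $\sum_i\langle d\varphi(e_i),d\varphi(e_i)\rangle=m$ and $\sum_i\langle A(e_i),e_i\rangle=\trace A=m\alpha$, all the relevant contractions collapse to multiples of $\eta$. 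Concretely, the first curvature term produces $c\,m^2\alpha|A|^{2(r-2)}\eta$, each bracketed difference inside the $\ell$-sum contributes $2c\,m^3\alpha^3|A|^{2(r-3)}\eta$, and in the odd case the extra term $R^N(\nabla^\varphi_{e_i}\overline{\Delta}^{s-1}\tau,\overline{\Delta}^{s-1}\tau)d\varphi(e_i)$ contributes $c\,m^3\alpha^3|A|^{2(r-3)}\eta$; these mirror exactly the model computations \eqref{Curv-tensor-expr-1} and \eqref{Curv-tensor-expr-2} already carried out for $r=3$.

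It then remains to assemble the pieces and count coefficients, treating $r=2s$ and $r=2s+1$ separately because the two tension formulas differ. The leading term is $\overline{\Delta}^{r-1}\tau(\varphi)=m\alpha|A|^{2(r-1)}\eta$. In the even case the $\ell$-sum contains $s-1$ identical differences, giving a cubic coefficient $2(s-1)=r-2$; in the odd case the $s-1$ differences together with the single extra term give $2(s-1)+1=2s-1=r-2$, so the two parities yield the identical answer
\begin{equation*}
\tau_r(\varphi)=m\alpha|A|^{2(r-3)}\big[|A|^4-c\,m|A|^2-(r-2)\,c\,m^2\alpha^2\big]\eta.
\end{equation*}
Since $M^m$ is non-minimal we have $\alpha\neq 0$, whence by the Cauchy inequality \eqref{Cauchy-inequality} $|A|^2\geq m\alpha^2>0$, so $\tau_r(\varphi)=0$ if and only if the bracketed polynomial vanishes, which is precisely the stated condition. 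I expect the main obstacle to be purely bookkeeping: correctly handling the index shifts $s+\ell-2$, $s-\ell-1$ (respectively $s+\ell-1$, $s-\ell-1$) in the sums and checking that the even and odd formulas produce the same coefficient $(r-2)$ for the $|A|^{2(r-3)}$ term; the geometry and the curvature contractions are otherwise routine once the identity $\overline{\Delta}^k\tau(\varphi)=m\alpha|A|^{2k}\eta$ is established.
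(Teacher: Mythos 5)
Your proposal is correct and follows essentially the same route as the paper: both establish the key identity $\overline{\Delta}^k\tau(\varphi)=m\alpha|A|^{2k}\eta$ (the paper via $\overline{\Delta}\eta=|A|^2\eta$, you via the same computation phrased as $\overline{\Delta}(g\eta)$ with $g$ constant) and then substitute into \eqref{2s-tension} and \eqref{2s+1-tension} using the space-form curvature \eqref{tensor-curvature-N(c)}, arriving at the identical coefficient count $2(s-1)=r-2$ in the even case and $2(s-1)+1=r-2$ in the odd case. Your curvature contractions and final factorization $\tau_r(\varphi)=m\alpha|A|^{2(r-3)}\big[|A|^4-c\,m|A|^2-(r-2)\,c\,m^2\alpha^2\big]\eta$ match the paper's computation (which writes out only $r=2s$ and omits the odd case as similar), and your explicit observation that $m\alpha|A|^{2(r-3)}\neq 0$ for a non-minimal hypersurface correctly completes the equivalence.
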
 
\begin{proof} As $|A|^2$ is constant, it follows from Lemma\link\ref{Lemma-Delta2-H} that
\begin{equation}\label{Delta-H-A2-constant}
\overline{\Delta}^2 {\mathbf H}= \alpha  |A|^4  \eta\,.
\end{equation}
Now, we show that
\begin{equation}\label{Delta-eta}
\overline{\Delta}\eta=|A|^2 \eta \,.
\end{equation}
Indeed, $\nabla_{X_i}^\varphi \eta=-A(X_i)$ and so
\[
\nabla_{X_i}^\varphi \nabla_{X_i}^\varphi \eta=-(\nabla A)(X_i,X_i)-|A(X_i)|^2\eta\,.
\]
Summing over $i$ and using Lemma~\ref{Lemma-tecnico-1} we obtain
\[
\overline{\Delta}\eta=m \grad f + |A|^2 \eta=|A|^2 \eta \,.
\]
Next, putting together \eqref{Delta-H-formula}, \eqref{Delta-H-A2-constant} and \eqref{Delta-eta}, we easily deduce that
\begin{equation}\label{Delta-H-potenza-p}
\overline{\Delta}^p {\mathbf H}= \alpha  |A|^{2p}  \eta \quad \quad \forall p \in \n^* \,.
\end{equation}
Now we are in a good position to perform the explicit calculation of the $r$-tension field $\tau_{r}(\varphi)$ described in \eqref{2s-tension}, \eqref{2s+1-tension}. We begin with $\tau_{2s}(\varphi)$, $s \geq2$. Using \eqref{Delta-H-potenza-p}, \eqref{tensor-curvature-N(c)} and computing we obtain (as in \eqref{2s-tension}, we omit to write the sum over $i$):
\begin{eqnarray*}
\frac{1}{m} \tau_{2s}(\varphi)&=&\alpha |A|^{4s-2} \eta-c  \,\Big \{ \langle d\varphi(X_i),d\varphi(X_i)   \rangle \alpha |A|^{4s-4} \eta - \langle d\varphi(X_i), \alpha |A|^{4s-4} \eta   \rangle  d\varphi(X_i) \Big \}\\
&& -c\,  m \sum_{\ell=1}^{s-1}  \Big \{ \langle d\varphi(X_i), \alpha |A|^{2s-2\ell-2} \eta   \rangle \big (  -\alpha |A|^{2s+2\ell-4} A(X_i) \big )  \\
&& - \langle d\varphi(X_i), -\alpha |A|^{2s+2\ell-4} A(X_i)   \rangle \alpha |A|^{2s-2\ell-2} \eta \Big \}\\
&&+c\,  m \sum_{\ell=1}^{s-1}  \Big \{ \langle d\varphi(X_i),- \alpha |A|^{2s-2\ell-2} A(X_i)   \rangle  \alpha |A|^{2s+2\ell-4} \eta \\
&& - \langle d\varphi(X_i), \alpha |A|^{2s+2\ell-4} \eta   \rangle  \big (  -\alpha |A|^{2s-2\ell-2} A(X_i) \big )   \Big \}\\
&=&\alpha |A|^{4s-2} \eta-c \, m \alpha |A|^{4s-4} \eta \\
&& -c\,  m \Big \{ \sum_{\ell=1}^{s-1} \big [m \alpha^3 |A|^{4s-6} \eta \big ] + \sum_{\ell=1}^{s-1} \big [m \alpha^3 |A|^{4s-6} \eta \big ] \Big \}\\
&=&\alpha |A|^{4s-6} \Big \{|A|^4-m \,c\, |A|^2-(2s-2)m^2\,c\,\alpha^2 \Big \}\eta\,.
\end{eqnarray*}
This completes the proof in the case $r=2s$. The case $r=2s+1$ is similar and so we omit the details.
\end{proof}
\begin{proof}[Proof of Theorem\link\ref{Th-structure-hypersurfaces-c>0}] (i) Since \eqref{r-harmonicity-condition-in-spheres} holds, we use the Cauchy inequality \eqref{Cauchy-inequality} and deduce:
\[
|A|^4=m|A|^2+(r-2)m^2 \alpha^2 \leq m|A|^2+(r-2)m |A|^2\,,
\]
and, consequently,
\[
|A|^2 \leq m(r-1)
\]
and $|A|^2 = m(r-1)$ if and only if $M^m$ is umbilical, i.e., $M^m$ is an open part of some small hypersphere $\s^m(R)$, $0<R \leq 1$. But, since $M^m$ is proper $r$-harmonic, we know that the only possibility is $R=1/\sqrt r$ (see \cite{BMOR1, Mont-Ratto4}). On the other hand, as $\alpha \neq 0$,  
\[
|A|^4=m|A|^2+(r-2)m^2 \alpha^2 \qquad \Rightarrow \qquad |A|^2>m
\]
and so the proof of statement (i) is complete.

(ii) Again, we start with \eqref{r-harmonicity-condition-in-spheres} and deduce
\[
|A|^2=\frac{m}{2} \Big ( 1+\sqrt{1+4(r-2)\alpha^2}\Big )\,.
\]
But $|A|^2 \geq m \alpha^2$ implies
\begin{equation}\label{*}
1+\sqrt{1+4(r-2)\alpha^2} \geq 2 \alpha^2\,.
\end{equation}
We observe that when $\alpha^2=r-1$ we have equality in \eqref{*}. From this it is easy to conclude that 
\[
1+\sqrt{1+4(r-2)\alpha^2} \geq 2 \alpha^2 \qquad  \Leftrightarrow \qquad \alpha^2 \in (0,r-1] \,.
\] 
Finally, when $\alpha^2=r-1$, we have
\[
|A|^2=\frac{m}{2} \Big( 1+\sqrt{1+4(r-2)(r-1)}\Big )=m(r-1)=m \alpha^2 \,,
\]
so $M^m$ is umbilical and the conclusion is as in part (i).
\end{proof}

\section{Proof of the results on $r$-harmonic isoparametric hypersurfaces}

\begin{proof}[Proof of Theorem\link\ref{Th-isoparametric-p=3}]
We know that $m_1=m_2=m_3$ and using \eqref{principal-curvatures} with $\ell=3$ we can compute $|A|^2=m_1 \left ( \sum_{i=1}^3 k_i^2\right )$ and $\alpha^2=(1/9)\left ( \sum_{i=1}^3 k_i\right )^2$. Then, using standard trigonometric identities, we find that equation \eqref{r-harmonicity-condition-in-spheres} becomes
\begin{eqnarray}\label{explicit-r-harm-p=3}
&&m_1^2 \Big [(2-r) \left(\tan \left(\frac{\pi}{6}-s \right)-\tan \left(s+\frac{\pi }{6}\right)+\cot
   s\right)^2\\\nonumber
   &&+\left(\tan ^2\left(\frac{\pi}{6}-s \right)+\tan ^2\left(s+\frac{\pi
   }{6}\right)+\cot ^2 s\right)^2\\\nonumber
   &&-3 \left(\tan
   ^2\left(\frac{\pi}{6}-s \right)+\tan
   ^2\left(s+\frac{\pi }{6}\right)+\cot ^2 s\right)\Big ]=0\,.
\end{eqnarray}
Next, using the standard addition and subtraction formulas for trigonometric functions, we compute and find that \eqref{explicit-r-harm-p=3} is equivalent to
\begin{equation}\label{eq-key-isoprametric-p=3}
\frac{9 m_1^2 \csc ^4 s \,\big [r \cos (12 s)-r+28 \cos (6 s)+44\big ]}{8 (2 \cos (2 s)+1)^4}=0\,.
\end{equation}
Now, let $x=\cos (6s)$. Then \eqref{eq-key-isoprametric-p=3} is equivalent to
\[
 r x^2+14 x+22-r=0 \,,
\]
whose solutions are
\begin{equation}\label{radici-p=3}
x_1=\frac{-\sqrt{r^2-22 r+49}-7}{r} \quad {\rm and} \quad x_2=\frac{\sqrt{r^2-22 r+49}-7}{r} \,.
\end{equation}
Now, a routine inspection of \eqref{radici-p=3} shows that there is no acceptable solution if $r\leq 19$. By contrast, when $r\geq 20$, we have $-(1/2) \geq x_1 >- 1$ and $-(1/5) \leq x_2 < 1$. From this the conclusion of the proof is immediate.

Finally, we observe that the two hypersurfaces $M_s$, $M_{s'}$ associated to $x_1$ are congruent and their distance from the minimal hypersurface $M_{\pi/6}$ tends to zero as $r$ grows to $+\infty$. Indeed, since $s'=\pi/ \ell -s=\pi/ 3 -s$, it is immediate to deduce that, if we write $M_s=F^{-1}(a)$, where $a= \cos 3 s$, then $M_{s'}=F^{-1}(-a)$. Now, since $F$ is the restriction to $\s^{m+1}$ of a homogeneous polynomial of degree $3$ (odd), then $M_s$ and $M_{s'}$ are congruent via the antipodal map. Similarly, the two hypersurfaces associated to $x_2$ are congruent and each of them approaches one of the focal varieties as $r$ increases to $+\infty$, so the proof is completed.
\end{proof}
\begin{proof}[Proof of Theorem\link\ref{Th-isoparametric-p=6}]
The proof follows exactly the scheme of the proof of Theorem\link\ref{Th-isoparametric-p=3}. Here we have $m_1=m_2=\ldots=m_6$ and instead of \eqref{explicit-r-harm-p=3} we find
\begin{eqnarray}\label{explicit-r-harm-p=6}\nonumber
&&m_1^2 \Big [(2-r) \left(-\tan \left(\frac{\pi}{6}-s \right)+\tan s+\tan \left(s+\frac{\pi
   }{6}\right)+\cot\left(\frac{\pi}{6}-s \right)-\cot s-\cot \left(s+\frac{\pi
   }{6}\right)\right)^2\\\nonumber
   && -6 \left(\tan ^2\left(\frac{\pi}{6}-s \right)+\tan ^2 s+\tan ^2\left(s+\frac{\pi
   }{6}\right)+\cot ^2\left(\frac{1}{6} (\pi -6
   s)\right)+\cot ^2 s+\cot ^2\left(\frac{\pi}{6}-s \right)\right)\\\nonumber
   &&\left(\tan ^2\left(\frac{\pi}{6}-s \right)+\tan
   ^2 s+\tan ^2\left(s+\frac{\pi }{6}\right)+\cot
   ^2\left(\frac{\pi}{6}-s \right)+\cot
   ^2 s+\cot ^2\left(s+\frac{\pi
   }{6}\right)\right)^2 \Big ]=0
\end{eqnarray}
which turns out to be equivalent to
\begin{equation}\label{eq-key-isoprametric-p=6}
\frac{9 m_1^2 \csc ^4 s \sec ^4 s \, \big [r \cos (24 s)-r+64 \cos (12
   s)+224 \big ]}{32 (2 \cos (4 s)+1)^4}=0\,.
\end{equation}
Now, let $x=\cos (12s)$. Then \eqref{eq-key-isoprametric-p=6} is equivalent to
\[
 r x^2+32 x+112-r=0 \,,
\]
whose solutions are
\begin{equation}\label{radici-p=6}
x_1=\frac{-\sqrt{r^2-112 r+256}-16}{r} \quad {\rm and} \quad x_2=\frac{\sqrt{r^2-112 r+256}-16}{r} \,.
\end{equation}
Now, a routine inspection of \eqref{radici-p=6} shows that there is no acceptable solution if $r\leq 109$. By contrast, when $r\geq 110$, we have $-(1/5) \geq x_1 >- 1$ and $-(1/11) \leq x_2 < 1$. Now the conclusion of the proof is as in Theorem\link\ref{Th-isoparametric-p=3}. The only difference, in this case, is that $F$ is the restriction to $\s^{m+1}$ of a homogeneous polynomial of even degree, and so we cannot conclude that the two solutions associated to $x_1$ or $x_2$ are congruent.  In particular, we point out that Miyaoka showed that, when $\ell=6$ and $m_1=m_2=1$, the two focal varieties $F^{-1}(1)$ and $F^{-1}(-1)$ are not congruent (see \cite{Miyaoka-Osaka}).
\end{proof}
\begin{proof}[Proof of Theorem\link\ref{Th-isoparametric-p=4}] Again, we use \eqref{principal-curvatures}. Without making any assumption on $m_1,m_2$ we find that \eqref{r-harmonicity-condition-in-spheres} becomes:
\begin{eqnarray}\label{explicit-r-harm-p=4}
&&-4 (r-2) (m_1 \cot (2 s)-m_2 \tan (2
   s))^2\\ \nonumber
  && +\left(m_1 \tan ^2 s+m_1 \cot
   ^2 s+m_2 \tan ^2\left(s+\frac{\pi
   }{4}\right)+m_2\cot ^2\left(s+\frac{\pi
   }{4}\right)\right)^2
   \\\nonumber
   &&-2 (m_1+m_1)
   \left(m_1 \tan ^2 s +m_1 \cot
   ^2 s+m_2 \tan ^2\left(s+\frac{\pi
   }{4}\right)+m_2 \cot ^2\left(s+\frac{\pi
   }{4}\right)\right)=0\,.
\end{eqnarray}
Now we use the assumption $m_1=m_2$ and the proof follows the patterns of Theorem\link\ref{Th-isoparametric-p=3}. In particular, when $m_1=m_2$ \eqref{explicit-r-harm-p=4} becomes
\begin{equation}\label{eq-key-isoprametric-p=4}
2 m_1^2 \csc ^4(4 s) \, \big [r \cos (16 s)-r+40 \cos (8 s)+88 \big ]=0\,.
\end{equation}
Next, let $x=\cos (8s)$. Then \eqref{eq-key-isoprametric-p=4} is equivalent to
\begin{equation}\label{Pol-semplice-p=4}
r x^2+20 x+44-r=0 \,,
\end{equation}
whose solutions are
\begin{equation}\label{radici-p=4}
x_1=\frac{-\sqrt{r^2-44 r+100}-10}{r} \quad {\rm and} \quad x_2=\frac{\sqrt{r^2-44 r+100}-10}{r} \,.
\end{equation}
As above, inspection of \eqref{radici-p=4} shows that there is no acceptable solution if $r\leq 41$. By contrast, when $r\geq 42$, we have $-(1/3) \geq x_1 >- 1$ and $-(1/7) \leq x_2 < 1$ and so the proof ends as in Theorem\link\ref{Th-isoparametric-p=6}.
\end{proof}
\begin{proof}[Proof of Theorem\link\ref{Th-isoparametric-p=4-m1-diverso-m2}] We simplify \eqref{explicit-r-harm-p=4} without the assumption $m_1=m_2$. We obtain
\begin{eqnarray}\label{eq-key-isoprametric-p=4-m1-diverso-m2}
&&4 r (m_1+m_2)^2+4 m_2 \sec ^2(2 s) \left[2 m_1-m_2 (r+4)+4 m_2\sec ^2(2
   s)\right]\\ \nonumber
   &&+\frac{1}{8}m_1 \csc ^4 s \,\sec ^4 s \,\big[\cos (4 s) (m_1 (r+4)-2
  m_2)-m_1 (r-4)+2 m_2 \big]=0\,.
\end{eqnarray}
Next, we set $y=\cos^2(2s)$. Then a straightforward computation shows that, in terms of $y$, \eqref{eq-key-isoprametric-p=4-m1-diverso-m2} 
becomes, up to the multiplicative quantity $4 m_1^2/ y$, $P_{b,r}(y)=0$, where $P_{b,r}(y)$ is the polynomial defined in \eqref{Pol-grado-4-p=4}. 
This completes the proof of the first part of the theorem. 

Next, we define
\[
T_r(s)= |A|^4(s)-m|A|^2(s)-(r-2)m^2 \alpha^2(s) \,,
\]
where $|A|^2(s)$ and $\alpha^2(s)$ are computed using the principal curvatures given in \eqref{principal-curvatures} with $\ell=4$ and $m=2(m_1+m_2)$.  We observe that, if $r_1>r_2$, then $T_{r_1}(s)\leq T_{r_2}(s)$ on $(0,\pi/4)$. For any fixed value of $r$, we have
\begin{equation}\label{limiti-Tr(s)}
\lim_{s \to 0^+} T_r(s)=\lim_{s \to (\pi/4)^-} T_r(s)=+\infty\,.
\end{equation}
Moreover, for future use we point out that, if we restrict $r$ to any arbitrary set $(0,K]$ ($K>0$), there always exist a sufficiently small $\varepsilon > 0$ such that 
\begin{equation}\label{compactness}
T_r(s) \geq 1 \qquad \forall s \in (0,\varepsilon] \cup \left [\frac{\pi}{4}-\varepsilon,\frac{\pi}{4} \right ) \,\, {\rm and} \,\, \forall r\in (0,K] \,.
\end{equation}
Next, we set ${\mathcal R}^*={\rm Inf} X$, where $X$ is the set defined as follows:
\[
X=\left \{r \in (0,+\infty)  \colon {\rm Min}\left\{ T_r(s),0<s< \pi/4 \right \}<0 \right \}\,.
\]
For future reference, we point out that inspection of \eqref{eq-key-isoprametric-p=4-m1-diverso-m2} shows that ${\mathcal R}^*$ depends on $b=m_1/m_2$.
We can choose $s_0 \in (0, \pi/4)$ such that $\alpha^2(s_0)\neq 0$. It follows that $T_r(s_0)<0$ provided that $r$ is sufficiently large. This fact, together with \eqref{limiti-Tr(s)}, shows that the open set $X$ is not  empty. In fact, $X$ is an open half-line. On the other hand, we know from \cite{Urakawa-no-bihar} that $|A|^4(s)-m|A|^2(s) > 0$ on $(0,\pi/4)$, and from this it is immediate to conclude that ${\mathcal R}^* \geq 2$. 
We note that ${\mathcal R}^* \not\in X$, i.e. $T_{{\mathcal R}^*}(s)\geq 0$ on $(0,\pi/4)$, and $T_r(s)> 0$ on $(0,\pi/4)$ for all $r<{\mathcal R}^*$. 

Next, we shall prove that ${\mathcal R}^*>4$.
To this purpose, first we observe that, if $0<r \leq 4$, $P_{0,r}(y)$ is positive on $(0,1)$. Then it suffices to show that, if $ 0< r \leq 4$, for any $y \in (0,1)$ we have
\begin{equation}\label{Dis-su-P-b-r}
  P_{b,r}(y) >P_{0,r}(y) \quad \quad \forall b>0 \,.
\end{equation}
The property \eqref{Dis-su-P-b-r} is an immediate consequence of the fact that, for all $b \geq 0$,
\[
\frac{dP_{b,r}(y)}{db}=  2 \,(1-y)^3 \,(4-ry)\,b+2\, y\, (1-y)\big( 1+y r (1-y)  \big )>0 
\]
provided that $0< r \leq 4$ and $0< y<1$. Thus $P_{b,r}(y)>0$ for any $b>0$, $y\in(0,1)$ and $0<r \leq 4$, so the equation $T_4(s)=0$ does not admit any solution in $(0,\pi/4)$ and consequently $T_4(s)>0$ on this interval. The last inequality implies $\mathcal{R}^* \geq 4$. 

Then it remains to prove that $\mathcal{R}^* > 4$. We assume $\mathcal{R}^* = 4$ and derive a contradiction. Using \eqref{compactness} with $K=5$ we deduce that, since $\mathcal {R}^* = 4={\rm Inf}X$, for any $n \in \n^*$ there exists $s_n \in (\varepsilon, \pi/4-\varepsilon)$ such that $T_{4+1/n}(s_n)<0$. Now we can extract a subsequence of $\{ s_n \}$ which converges to some $s_1\in [\varepsilon, \pi/4-\varepsilon]$ and passing to the limit we get $T_4(s_1)\leq 0$. But this represents a contradiction, as $T_4(s)$ is positive on $(0,\pi /4)$.
By a similar continuity argument, using again \eqref{compactness} we also deduce that there exists an interior point $\hat{s}$ such that $T_{{\mathcal R}^*}(\hat{s})=0$. 

Finally, if ${\mathcal R}^*$ is an integer we set $r^*={\mathcal R}^*$, otherwise $r^*=\lfloor {\mathcal R}^* \rfloor+1$, where $\lfloor x\rfloor$ denotes the integer part of $x \in \R$. By construction, this definition of $r^*$ ensures that $r^*$ has the properties stated in ${\rm (i),(ii)}$ of the theorem. 

By way of summary, it only remains to prove statement ${\rm (iii)}$. We observe that $\alpha(s)$ is a strictly decreasing function on $(0,\pi /4)$ and $\lim_{s \to 0^+} \alpha(s)=+\infty,\,\lim_{s \to (\pi/4)^-} \alpha(s)=-\infty$. Therefore, there exists a unique value $s^* \in (0,\pi /4)$ such that $\alpha(s^*)=0$ (note that $s^*=(1/2) \arccos \left (\sqrt{m_2/(m_1+m_2)} \right )$ and $M_{s^*}$ is the minimal isoparametric hypersurface of the family). It follows that $T_r(s^*)$ is a positive real number which does not depend on $r$. From this, arguing similarly to the construction of $r^*$ separately on $(0,s^*)$ and $(s^*, \pi/4)$, it is easy to conclude that there exists $r^{**}$ with the property stated in ${\rm (iii)}$ and so the proof is completed. Of course, two solutions are always in the interval $(0,s^*)$, while the other two must belong to $(s^*, \pi/4)$.
\end{proof}
\subsection{Upper estimates for $r^*$ and $r^{**}$}\label{Remark-four-solutions} The arguments of the proof of Theorem\link\ref{Th-isoparametric-p=4-m1-diverso-m2} apply to all the families of isoparametric hypersurfaces. In particular, in the situations of Theorems\link\ref{Th-isoparametric-p=3}, \ref{Th-isoparametric-p=6}, \ref{Th-isoparametric-p=4} we were able to determine exactly $r^*=r^{**}=20,110,42$ respectively. By contrast, in the case of isoparametric hypersurfaces of degree $4$ and different multiplicities, it seems to be a difficult task to determine the explicit values of $r^*=r^*(b)$ and $r^{**}=r^{**}(b)$, $b=(m_2/m_1)>1$. The aim of this subsection is to explain the type of difficulties which arise and also provide some upper bounds for both $r^*$ and $r^{**}$. 

We rewrite the polynomial $P_{b,r}(y)$ in \eqref{Pol-grado-4-p=4} as follows:
\[
P_{b,r}(y)=Q_b(y)-r R_b(y)\,,
\]
where
\[
\begin{array}{l}
Q_b(y)=2 \big [2 b^2 (1-y)^3+b \,y \,(1-y)+2 y^3 \big ] \\
R_b(y)=(1-y)\, y\, [b (y-1)+y]^2 \,.
\end{array}
\]
We note that $Q_b(y)$ is positive on $(0,1)$, while $R_b(y)$ is positive on $(0,y_0)$ and $(y_0,1)$, where $y_0=b/(1+b)=m_2/(m_1+m_2)$ corresponds to the minimal hypersurface $M_{s_0}$, where
\[
s_0=\frac{1}{2}\arccos \left ( \sqrt{y_0} \right ) \,.
\]
Moreover, $R_b(y_0)=0$ and $Q_b(y_0)=P_{b,r}(y_0)= 6b^2/(1+b)^2 >0$.
Now, since we want to determine the smallest value of $r$ such that $P_{b,r}(y)$ admits a root in $(0,1)$, it is natural to define
\[
{\mathcal R}(y)= \frac{Q_b(y)}{R_b(y)} \quad \quad y \in (0,1)\,,\,\, y \neq y_0\,,
\]
and set
\[
\begin{array}{l}
{\mathcal R}_1={\rm Inf} \Big \{{\mathcal R}(y) \colon y \in (0,y_0) \Big \} \\
{\mathcal R}_2={\rm Inf} \Big \{{\mathcal R}(y) \colon y \in (y_0,1) \Big \}\,.
\end{array}
\]
The connection with the proof of Theorem\link\ref{Th-isoparametric-p=4-m1-diverso-m2} is the following:
\[
{\mathcal R}^*= {\rm Min} \big \{{\mathcal R}_1, {\mathcal R}_2 \big \} \,\,; \quad r^{**}=\lfloor {\mathcal R}^{**} \rfloor +1,\quad {\rm where}\,\, {\mathcal R}^{**}= {\rm Max} \big \{{\mathcal R}_1, {\mathcal R}_2 \big \} \,.
\] 
The function ${\mathcal R}(y)$ is positive and tends to $+\infty$ when $ y \rightarrow 0^+$, $ y \rightarrow y_0^{\pm}$ and  $ y \rightarrow 1^-$. Therefore ${\mathcal R}(y)$ admits a minimum point $y_1$ on $(0,y_0)$ and a minimum point $y_2$ on $(y_0,1)$, so that ${\mathcal R}_i={\mathcal R}(y_i)$, $i=1,2$. Unfortunately, it is difficult to compute the exact values of $y_1$ and $y_2$. A numerical analysis shows that, when $b$ is not too big, $y_1$ is on the interval $(0,y_0/2)$, while $y_1$ belongs to $(y_0/2,y_0)$ for large values of $b$. Similarly, $y_2$ increases as $b$ does. Therefore, a first reasonable upper estimate for ${\mathcal R}_1$ can be obtained by considering the medium point. We have 
\[
{\mathcal R}_1 \leq {\mathcal R}(y_0/2)=\frac{8 \left(b^2+6 b+10\right)}{b (b+2)} \,.
\] 
Similarly, an upper estimate for ${\mathcal R}_2$ is given by 
\[
{\mathcal R}_2 \leq {\mathcal R}\big ((1+y_0)/2 \big )=\frac{(8 + 48 b + 80 b^2)}{(1 + 2 b)}\,.
\]  
Next, we observe that 
\[
{\mathcal R}\big ((1+y_0)/2 \big ) > {\mathcal R}(y_0/2) \quad \forall b > 1 \,.
\]
The conclusion of this analysis is the following.
Let $r^*=r^*(b)$, $r^{**}=r^{**}(b)$ ($b>1$) be the integers defined in Theorem\link\ref{Th-isoparametric-p=4-m1-diverso-m2}. Then
\begin{equation}\label{eq-stime-r*-r**}
\begin{array}{ll}
{\rm (i)} & r^*(b) \leq 1+ \displaystyle{\frac{8 \left(b^2+6 b+10\right)}{b (b+2)} }\\
&\\
{\rm (ii)} & r^{**}(b) \leq 1+\displaystyle{\frac{(8 + 48 b + 80 b^2)}{(1 + 2 b)}}\,.
\end{array}
\end{equation}
For instance, when $ b$ is close to $1$, \eqref{eq-stime-r*-r**} tells us that $r^*(b) \leq 46$ and $r^{**}(b) \leq 46$ (we know from Theorem\link\ref{Th-isoparametric-p=4} that $r^*=r^{**}=42$ when $b=1$). When $b=10000$, \eqref{eq-stime-r*-r**} gives $r^* \leq 9$ and $r^{**}\leq 400005$ (we pointed out in Example\link\ref{Ex-r=5} that, when $b=10000$, $r^*=5$ and $r^{**}=312919$). Another geometrically significant case which occur is $(m_1,m_2)=(7,8)$. In this case, \eqref{eq-stime-r*-r**} with $b=8/7$ yields $r^* \leq 41$ and $r^{**} \leq 51$. A numerical analysis shows that, when $b=8/7$, the exact values are $r^*=38$ and $r^{**}=47$.
\subsection{Remarks} (i) By a suitable choice of the orientation we assumed, for convenience, $m_1 \leq m_2$ so that $b=m_2 / m_1 \geq 1$. We point out that, if one decides to carry out the analysis for all $b>0$, then there are no significant differences because of the  following fact, which is true for all $b\neq 0$, $y, r \in \R$ and can be verified by a direct computation:
\begin{equation*}\label{eq-symm-pol}
 b^2\, P_{1/b,r}(1-y)=P_{b,r}(y)\,.
\end{equation*}
(ii) In order to recover the correspondance between $P_{1,r}(y)$ and the second order equation \eqref{Pol-semplice-p=4}, one has to remember that $y=\cos^2(2s)$, while $x=\cos(8s)$ and consequently perform the changes of variable
\[
y=\frac{1}{4} \left(2\pm\sqrt{2} \sqrt{x+1}\right)\,.
\]
Unfortunately, these changes of variable do not yield any simplification if $b\neq1$.

(iii) We think that it could be interesting to find an isoparametric family with one or three distinct proper solutions for some $r$ (necessarily, the degree of such a family must be $4$ and $m_1 \neq m_2$). A necessary condition to have just one solution is that the real number ${\mathcal R}^*$ defined in the proof of Theorem\link\ref{Th-isoparametric-p=4-m1-diverso-m2} be an integer.

(iv) Using \eqref{r-harmonicity-condition-in-spheres} and \eqref{principal-curvatures} with $\ell=1,2$, together with the technique of proof of the present paper, it is possible to recover the characterisation of proper $r$-harmonic small hyperspheres and generalised Clifford tori obtained by variational methods in \cite{Mont-Ratto4}.

\section{$ES-r$-harmonicity and $r$-harmonicity}

\begin{proof}[Proof of Theorem\link\ref{Th-Es-r-e-r-harmonicity-general}] It is convenient to separate two cases: $r$ even and $r$ odd. First, let us assume that $r=2s \geq 4$. In this case, since if $p\neq p'$ the subspace of $p$-forms is orthogonal to that of $p'$-forms, we have
\begin{equation}\label{E-2s-struttura}
E_{2s}^{ES}(\varphi)= \frac{1}{2} \int_M  \sum_{k=0}^{s-1} \langle \omega_{2k}, \omega_{2k} \rangle  \,dV\,,
\end{equation}
where $\omega_{2k}$ are  $\varphi^{-1}TN$-valued $2k$-differential forms on $M$. Each $\omega_{2k}$ is the sum of $2k$-differential forms obtained by applying to $\tau$ a string of length $2s-2$ made of $d$'s and $d^*$'s. Since $d^*$ is the null operator on $0$-forms, if we read from right to left the string of a nonzero form, the number of $d$'s that we encounter in any truncated string is always not smaller than  the number of $d^*$s, otherwise the form automatically vanishes.  In each of these strings the number $n_d$ of occurrences of $d$ minus the number $n_{d^*}$ of occurrences of $d^*$ is equal to $2k$. To help the reader, let us give an explicit example of a $2k$-form which can occur when $r=26, 2k=8$:
\begin{equation}\label{Stringa-esempio}
\alpha_{8}=d d^*  d^6 (d^*)^3 d^5 (d^* d)^4  \tau  \quad \quad (n_d=16,\,n_{d^*}=8 )\,.
\end{equation}
\begin{definition}\label{Def-omega-good} Let $\alpha_{2k}$ be any $2k$-form obtained applying to $\tau$ any string made of $d$'s and $d^{\ast}$'s. We say that $\alpha_{2k}$ is \textit{good} if its string starts on the right with a term of the type $  d^2 \overline{\Delta}^i \tau$, for some $i$ such that $r-4 \geq i \geq 0$.
\end{definition}
For instance, the $8$-form in \eqref{Stringa-esempio} is good with $i=4$. The following lemma is important for our purposes.
\begin{lemma}\label{Lem-good-forms} Let $\alpha_{2k}$ be a good form in the sense of Definition\link\ref{Def-omega-good}. Then, if the map $\varphi$ satisfies the hypotheses of Theorem\link\ref{Th-Es-r-e-r-harmonicity-general}, $\alpha_{2k}$ vanishes.
\end{lemma}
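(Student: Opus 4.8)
The plan is to reduce the whole statement to one algebraic fact: on a space form, $d^2$ annihilates any section of $\varphi^{-1}TN$ that is pointwise orthogonal to $d\varphi(TM)$. The key is formula \eqref{d2-description}. Although it is stated there only for $\tau(\varphi)$, its derivation uses solely that $\tau(\varphi)$ is a $0$-form; indeed $d^2$ acting on a section of $\varphi^{-1}TN$ is exactly the action of the curvature of the pull-back connection. Hence for \emph{any} section $\sigma$ of $\varphi^{-1}TN$ we have $d^2\sigma(X,Y)=R^N(d\varphi(X),d\varphi(Y))\sigma$.

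First I would apply this to $\sigma=\overline{\Delta}^i\tau(\varphi)$ and insert the space form curvature \eqref{tensor-curvature-N(c)}, obtaining
\[
d^2\big(\overline{\Delta}^i\tau(\varphi)\big)(X,Y)=c\Big(\langle d\varphi(Y),\overline{\Delta}^i\tau(\varphi)\rangle\, d\varphi(X)-\langle d\varphi(X),\overline{\Delta}^i\tau(\varphi)\rangle\, d\varphi(Y)\Big).
\]
The orthogonality hypothesis \eqref{Ipotesi-Deltatau-ortogonale}, which is assumed exactly for $i=0,\ldots,r-4$, forces both inner products to vanish identically on $M$, so $d^2\big(\overline{\Delta}^i\tau(\varphi)\big)=0$ for every $i$ in this range.

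Second, I would invoke the structure of a good form. By Definition \ref{Def-omega-good} a good $\alpha_{2k}$ is, reading its string from right to left, of the shape $(\text{further } d\text{'s and } d^*\text{'s})\, d^2\,\overline{\Delta}^i\tau(\varphi)$ with $0\le i\le r-4$; that is, its innermost building block is precisely $d^2\overline{\Delta}^i\tau(\varphi)$ (in the example \eqref{Stringa-esempio} one writes $d^5\overline{\Delta}^4\tau=d^3(d^2\overline{\Delta}^4\tau)$, so $i=4$). Since that block already vanishes by the first step and the operators $d$, $d^*$ are linear, the zero form is preserved under the remaining string and therefore $\alpha_{2k}=0$.

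The argument has essentially no serious obstacle; the only point requiring attention is the bookkeeping that the admissible range $0\le i\le r-4$ in Definition \ref{Def-omega-good} coincides exactly with the range for which \eqref{Ipotesi-Deltatau-ortogonale} is assumed — which is precisely why the cutoff $r-4$ is built into both the hypothesis of Theorem \ref{Th-Es-r-e-r-harmonicity-general} and the notion of a good form.
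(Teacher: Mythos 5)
Your proposal is correct and takes essentially the same approach as the paper: the paper's proof likewise identifies $d^2$ acting on $\overline{\Delta}^i\tau(\varphi)$ with the pull-back curvature as in \eqref{d2-description}, substitutes the space-form curvature \eqref{tensor-curvature-N(c)}, and concludes from the orthogonality hypothesis \eqref{Ipotesi-Deltatau-ortogonale}. The only difference is that you spell out what the paper leaves implicit in ``follows immediately'' --- namely that both inner products vanish pointwise and that the vanishing of the innermost block $d^2\overline{\Delta}^i\tau(\varphi)$ propagates through the rest of the string by linearity of $d$ and $d^*$.
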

\begin{proof}[Proof of Lemma\link\ref{Lem-good-forms}] As in \eqref{d2-description}, we have
\begin{equation*}\label{d2-description-lemma}
d^2(\overline{\Delta}^i \tau)(X,Y)=R^{N(c)} \big(d\varphi(X),d\varphi(Y)\big )\overline{\Delta}^i \tau 
\end{equation*}
and so, using \eqref{tensor-curvature-N(c)}, the conclusion of the lemma follows immediately from \eqref{Ipotesi-Deltatau-ortogonale}.
\end{proof}
Next, for $ k \geq 1$,  let $\omega_{2k}$ be the $2k$-form which appears in \eqref{E-2s-struttura}. Then $\omega_{2k}$ is the sum of good forms. Indeed, this is a simple consequence of the fact that $n_d-n_{d^*}=2k \geq 2$. Therefore, since $\langle\, , \rangle $ is bilinear, $\langle \omega_{2k}, \omega_{2k} \rangle $ is the sum of addends of the type $\langle \alpha_{2k}, \beta_{2k} \rangle $, where $\alpha_{2k} ,\beta_{2k} $ are good forms. Now, as in \cite{EL83}, we consider a variation $\{\varphi_t\}_t$ as a map $\Phi: \R \times M \to N$ and denote by $\nabla^\Phi$ the corresponding covariant derivative. Then we can study the first variation and we have:
\begin{eqnarray}\label{First-var-1}\nonumber
\left . \frac{d}{dt} \int_M \langle (\alpha_{2k})_t, (\beta_{2k})_t \rangle dV \right |_{t=0}&=&\int_M \Big \{ \langle \left. \nabla^{\Phi}_{\frac{\partial}{\partial t}} (\alpha_{2k})_t \right |_{t=0}, (\beta_{2k})_0 \rangle +\langle  (\alpha_{2k})_0 ,\left . \nabla^{\Phi}_{\frac{\partial}{\partial t}} (\beta_{2k})_t \right |_{t=0}\rangle \Big \}dV \\
&=&0 \,,
\end{eqnarray}
where the $\varphi_t^{-1}TN$-valued $2k$-form $(\alpha_{2k})_t$ is identified with a section in $\Lambda ^{2k}T^*M \otimes \Phi^{-1}TN$, and the last equality is a consequence of the fact that $(\alpha_{2k})_0$ and $(\beta_{2k})_0$ are good forms and so they vanish by Lemma~\ref{Lem-good-forms}. 

By way of summary, if $\varphi$ is a map which verifies the hypotheses of Theorem\link\ref{Th-Es-r-e-r-harmonicity-general}, the first variation associated to the forms $\omega_{2k}$ ($k \geq 1$) in \eqref{E-2s-struttura} automatically vanishes. It remains to understand the contribution of $\omega_0$. To this purpose, we observe that 
\begin{equation*}\label{Structure-omega-zero}
\omega_0= \overline{\Delta}^{s-1}\tau +{\rm good \,\,forms \,\,of \,\,degree\,\, zero }\,. 
\end{equation*}
It follows that 
\[
\langle \omega_0 , \omega_0 \rangle = \langle\overline{\Delta}^{s-1}\tau ,\overline{\Delta}^{s-1} \tau\rangle+ {\rm addends \,\, of \,\, type \,1}+{\rm addends \,\, of \,\, type \,2}\,,
\]
where the terms of type\link$1,2$ are, respectively, of the form $ \langle \alpha_0,\beta_0 \rangle$ and $ \langle\overline{\Delta}^{s-1}\tau ,\alpha_0\rangle$, where $\alpha_0, \beta_0$ are good forms of degree $0$. Arguing as in \eqref{First-var-1}, we conclude that the first variation of the terms of type $1$ vanishes automatically. As for type $2$, let us first assume that $M$ is \textit{compact}. Then we can use the fact that $d^*$ is the adjoint operator of $d$, i.e.,
\begin{equation}\label{adjoint}
\int_M \langle d \alpha, \beta \rangle dV =\int_M \langle  \alpha,d^* \beta \rangle dV
\end{equation}
whenever $\alpha, \beta$ are, respectively, a $p$-form and a $(p+1)$-form. Therefore, before taking the derivative with respect to $t$, we can transform, by means of suitable iterated applications of \eqref{adjoint}, any term of type $2$ into a term of the type $\langle \alpha_{2k},\beta_{2k} \rangle$, where $ \alpha_{2k},\beta_{2k}$ are good forms ($k \geq1$) (it is easy to check that, in this process, the maximum value which we can obtain for the exponent $i$ of Definition\link\ref{Def-omega-good} is equal to $r-4$). 

The case that $M$ is \textit{not} compact can be handled by using \textit{compactly supported variations}. More precisely, we consider all compact subsets $D \subset M$ with smooth boundary and define
\[
E_r^{ES}(\varphi;D)= \frac{1}{2}\int_D |(d^*+d)^r(\varphi)|^2 dV \,.
\]
For each subset $D$ we consider all smooth variations of $\varphi$ which have their support in $D$, i.e., $\Phi=\{ \varphi_t\}_t $ such that $\varphi_t=\varphi$ on $M \backslash D$ for any $t$. Denoting by $V$ its variation vector field, we observe that obviously $V$ and its covariant derivatives vanish on $M \backslash D$ and, by continuity, on  
$\overline{M \backslash D}$. Now, the only difference with respect to the compact case appears when we study the first variation of the terms of type $2$. However, we can again proceed as above because \eqref{adjoint} remains true if one integrates on $D$ instead of $M$. Indeed, as $\nabla^{\Phi}_{\partial /\partial t} ( \cdot )|_{t=0}$ and $d$, or $d^*$, commute (on the whole $M$), we have
\begin{eqnarray*}
\left . \frac{d}{dt} \int_D \langle d\alpha_t,\beta_t\rangle dV \right |_{t=0}&=&\int_D \Big \{\langle \left . \nabla^{\Phi}_{\frac{\partial}{\partial t}}(d \alpha_t) \right |_{t=0}, \beta_0 \rangle +\langle d\alpha_0,\left . \nabla^{\Phi}_{\frac{\partial}{\partial t}} \beta_t \right |_{t=0}\rangle \Big \} dV\\
&=&\int_D \Big \{\langle d \left (  \left . \nabla^{\Phi}_{\frac{\partial}{\partial t}} \alpha_t \right |_{t=0} \right ), \beta_0 \rangle +\langle d\alpha_0,\left . \nabla^{\Phi}_{\frac{\partial}{\partial t}} \beta_t \right |_{t=0}\rangle \Big \} dV\,.
\end{eqnarray*}
We note that the terms like $ \nabla^{\Phi}_{\partial /\partial t}  \alpha_t \big |_{t=0}$ can always be expressed as functions of $V$ and its covariant derivatives. For example,
\[
\left . \nabla^{\Phi}_{\frac{\partial}{\partial t}} \tau(\varphi_t) \right |_{t=0}=-\overline{\Delta}V-\trace R^N\big ( d\varphi(\cdot),V\big )d\varphi(\cdot)\,.
\]
Then it is well-known that, in general, the difference
\[
\langle d \sigma, \rho \rangle- \langle \sigma, d^* \rho \rangle
\]
is the divergence of a vector field defined on $M$. Since the components of this vector field are given in terms of $V$ and its covariant derivatives, its divergence vanishes on the boundary $\partial D$. Therefore
\begin{eqnarray*}
\left . \frac{d}{dt} \int_D \langle d\alpha_t,\beta_t\rangle dV \right |_{t=0}&=&\int_D \Big \{\langle \left . \nabla^{\Phi}_{\frac{\partial}{\partial t}} \alpha_t \right |_{t=0}, d^* \beta_0 \rangle +\langle \alpha_0,d^* \left (\left . \nabla^{\Phi}_{\frac{\partial}{\partial t}} \beta_t \right |_{t=0}\right )\rangle \Big \} dV\\
&=&\left . \frac{d}{dt} \int_D \langle \alpha_t,d^*\beta_t\rangle dV \right |_{t=0}
\end{eqnarray*}
and now the proof continues as in the compact case.

In summary, under the hypotheses of Theorem\link\ref{Th-Es-r-e-r-harmonicity-general} the only relevant term for the computation of the first variation of the functionals $E_{2s}^{ES}(\varphi)$ is $\langle\overline{\Delta}^{s-1}\tau ,\overline{\Delta}^{s-1} \tau\rangle=|\overline{\Delta}^{s-1}\tau |^2$, i.e.,
\begin{eqnarray*}
\left . \frac{d}{dt} E_{2s}^{ES}(\varphi_t) \right |_{t=0}&=&\frac{1}{2}
\left . \frac{d}{dt} \int_M  \sum_{k=0}^{s-1} \langle (\omega_{2k})_t, (\omega_{2k})_t \rangle  dV \right |_{t=0}\\
&=&\frac{1}{2} \left . \frac{d}{dt} \int_M  \langle (\omega_{0})_t, (\omega_{0})_t \rangle  dV \right |_{t=0}=
\frac{1}{2} \left . \frac{d}{dt} \int_M  \langle\overline{\Delta}^{s-1}\tau(\varphi_t) ,\overline{\Delta}^{s-1} \tau(\varphi_t) \rangle  dV \right |_{t=0}\\
&=& \left . \frac{d}{dt} E_{2s}(\varphi_t) \right |_{t=0}\,.
\end{eqnarray*}
So the critical points of $E_{2s}^{ES}(\varphi)$ coincide with those of $E_{2s}(\varphi)$ and the proof of the case $r=2s$ is ended. 

In the case that $r=2s+1\geq 5$ we have
\[
E_{2s+1}^{ES}(\varphi)= \frac{1}{2} \int_M  \sum_{k=0}^{s-1} \langle \omega_{2k+1}, \omega_{2k+1} \rangle  \,dV\,.
\]
Now the proof is very similar to that of the case $r=2s$ and so we omit  further details.
\end{proof}
\begin{proof}[Proof of Corollary\link\ref{Cor-Es-r-e-r-harmonicity}] Since $\tau=m \mathbf H$, we know from \eqref{Delta-H-potenza-p} that
\[
\overline{\Delta}^i \tau =\big ( m \, \alpha \,|A|^{2 i} \big )\eta  \quad \forall i \geq 0 \,.
\]
Thus \eqref{Ipotesi-Deltatau-ortogonale} holds and so the conclusion follows immediately from Theorem\link\ref{Th-Es-r-e-r-harmonicity-general}.
\end{proof}

\section{Further studies} 

{\rm (i)} We point out that, as in the case of Theorem\link\ref{Th-existence-hypersurfaces-c>0}, the conclusion of Theorem\link\ref{Th-existence-hypersurfaces-c>0-e-c<0} also holds if, in its statement, the word $r$-harmonic is replaced by $ES-r$-harmonic.\\

{\rm (ii)} In the proof of Theorem\link\ref{Th-Es-r-e-r-harmonicity-general}, the geometric assumption \eqref{Ipotesi-Deltatau-ortogonale} has been used, together with the explicit form of the curvature tensor field of a space form, to deduce that
\begin{equation}\label{Ipotesi-forte}
d^2 \overline{\Delta}^i \tau (\varphi)=0 \,, \quad i=1,\ldots, r-4\,.
\end{equation}
Now, let $\varphi:(M,g) \to (N,h)$ be any smooth map between two Riemannian manifolds. We point out that, as a consequence of the proof of Theorem\link\ref{Th-Es-r-e-r-harmonicity-general}, if $\varphi$ satisfies \eqref{Ipotesi-forte}, then $\varphi$ is $r$-harmonic if and only if it is $ES-r$-harmonic.\\

{\rm (iii)} In this paper we have worked with the hypothesis that the mean curvature function $f$ and $|A|^2$ are both constant. In general, if $M^m$ is any hypersurface in a space form of curvature $c$, then these two quantities are related to the scalar curvature $S$ of $M^m$ by the following formula:
\[
S=c(m-1)m+m^2f^2-|A|^2\,.
\]
It follows that if two amongst $f$, $|A|^2$ and $S$ are constant, then so is the third. In the literature, the study of hypersurfaces in $\s^{m+1}$ with constant scalar and mean curvatures has attracted the attention of several geometers. For instance, Chang proved the following result:
\begin{theorem}\label{Th-Chang}\cite{Chang} A compact hypersurface $M^m$ of constant scalar curvature and constant mean curvature in $S^{m+1}$ is isoparametric provided that it has $3$ distinct principal curvatures everywhere.
\end{theorem}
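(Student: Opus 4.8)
The plan is to prove isoparametricity by a Bochner/divergence argument built on the Cheng--Yau operator, after reducing the problem to showing that the three principal curvatures are constant.

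First I would record the standing reductions. By the Gauss equation, which gives $S=c(m-1)m+m^2f^2-|A|^2$ (here $c=1$), the hypothesis of constant scalar and mean curvature is equivalent to $f$ and $|A|^2$ both constant. Since $M^m$ has exactly three distinct principal curvatures $k_1,k_2,k_3$ at every point, their multiplicities $m_1,m_2,m_3$ are locally constant and, by connectedness, globally constant, and each $k_i$ is a smooth function. What must be shown is that each $k_i$ is constant, which is precisely what isoparametricity means here. Differentiating the two constant relations $\sum_i m_ik_i=mf$ and $\sum_i m_ik_i^2=|A|^2$ yields two independent linear relations among $\grad k_1,\grad k_2,\grad k_3$: the $2\times3$ coefficient matrix has the nonvanishing Vandermonde-type minors $m_im_j(k_j-k_i)$, so its kernel is one-dimensional and all three gradients are proportional to a single vector field. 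It therefore suffices to prove, say, that $\trace A^3=\sum_i m_ik_i^3$ is constant, since then $\trace A$, $\trace A^2$, $\trace A^3$ are constant and the three distinct $k_i$ (with fixed multiplicities) are determined.

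Next I would bring in the Codazzi equations. Because the ambient space is a space form, $\langle(\nabla A)(\cdot,\cdot),\cdot\rangle$ is totally symmetric (Lemma~\ref{Lemma-tecnico-1}(b)). Writing this in a local orthonormal principal frame $\{E_a\}$ and naming the connection coefficients $\langle\nabla_{E_c}E_a,E_b\rangle$ produces the standard relations $E_a(k_b)=(k_a-k_b)\langle\nabla_{E_b}E_a,E_b\rangle$ for $k_a\neq k_b$, together with the triple relation coupling the coefficients across three distinct eigenspaces. The analytic engine is the Cheng--Yau operator $\square u=\sum_{i,j}(mf\,\delta_{ij}-h_{ij})\,u_{;ij}$ attached to the first Newton transformation $P_1=mf\,I-A$; since $P_1$ is divergence-free in a space form (again a consequence of Codazzi), one has $\square u=\operatorname{div}(P_1\,\grad u)$ and hence $\int_M\square u\,dV=0$ for every smooth $u$ on the compact manifold $M^m$.

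Finally I would feed into $\square$ a test function adapted to the geometry (for instance $|A|^2$, or a symmetric function of the $k_i$ tuned to the multiplicities) and expand $\square u$ by means of the Codazzi relations above. The result is an integral identity of the form
\begin{equation*}
0=\int_M\Big(\,\text{(a nonnegative quadratic form in the }\grad k_i)+\Phi(k_1,k_2,k_3)\,\Big)\,dV,
\end{equation*}
in which the algebraic remainder $\Phi$ carries a definite sign dictated by the distinctness of the three principal curvatures (the denominators $k_i-k_j$ resurface through Codazzi). Compactness then forces each summand to vanish, so $\grad k_i=0$ and $M^m$ is isoparametric. The hard part is this last step: choosing the right test function and executing the Bochner-type bookkeeping so that the algebraic remainder $\Phi$ has a controllable sign. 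This is exactly where the hypothesis of three distinct principal curvatures and the global assumption of compactness are indispensable, and where one must establish the nontrivial algebraic identities relating the $k_i$ and the multiplicities $m_i$.
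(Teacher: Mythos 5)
Your preliminary reductions are sound: the Gauss-equation translation of the hypotheses into $f$ and $|A|^2$ constant, the smoothness of the $k_i$ and constancy of their multiplicities, the one-dimensional kernel of the $2\times 3$ system so that all three gradients are proportional, and the reduction to showing $\trace A^3$ constant via the Vandermonde determinant $m_1m_2m_3(k_2-k_1)(k_3-k_1)(k_3-k_2)\neq 0$. But the proof stops exactly where Chang's theorem begins. The entire analytic content is compressed into the unexecuted claim that some test function $u$ makes $\square u$ expand as a nonnegative quadratic form in the $\grad k_i$ plus a remainder $\Phi(k_1,k_2,k_3)$ whose sign is ``dictated by the distinctness'' of the curvatures. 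No such $u$ is exhibited, no expansion is carried out, and the sign assertion is not credible as stated: in Bochner-type identities of this kind (Cheng--Yau, Peng--Terng, and Chang's own computations) the zero-order remainder is an \emph{indefinite} polynomial in the $k_i$, and taming it is the whole difficulty; Chang's published argument is not a one-shot integral identity but a delicate frame-by-frame analysis of the Codazzi relations. A concrete failure point in your plan: when one of the $k_i$ is simple, nothing gives $X_ik_i=0$ along its own eigendistribution (Otsuki's lemma requires multiplicity at least two), so the connection coefficients $\langle\nabla_{E_b}E_a,E_b\rangle$ you intend to control through Codazzi carry extra uncontrolled terms precisely in the hardest case, and your proposed ``nonnegative quadratic form'' has no reason to dominate them.

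For comparison, note that the paper does not prove Theorem~\ref{Th-Chang} at all --- it is quoted from \cite{Chang} --- and the related result it does prove (the Proposition closing the paper) adds the hypothesis that each of the three principal curvatures has multiplicity at least two, exactly so that Otsuki's lemma applies and the compactness and Bochner machinery can be dispensed with entirely. Under that extra hypothesis the argument is local and elementary: $X_ik_i=0$ on each $T_i$, and differentiating the two constant relations $\sum_j m_jk_j=m\alpha$ and $\sum_j m_jk_j^2=|A|^2$ along $X_1\in T_1$ gives a $2\times 2$ system with determinant $m_2m_3(k_3-k_2)\neq 0$, forcing $X_1k_2=X_1k_3=0$, and cyclically for the other distributions. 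If you want the full compact statement with multiplicity one allowed, you must either reproduce Chang's analysis or actually produce the test function and verify the sign of the remainder; as written, the outline asserts the conclusion of the hard step rather than proving it.
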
 
\noindent To our knowledge, in the literature there are no examples of hypersurfaces $M^m$ in $\s^{m+1}$ which have constant $f$ and $S$ but are \textit{not} isoparametric. Moreover, we know that, when $m=3$, any compact CMC hypersurface  of constant scalar curvature in $\s^4$ is isoparametric (see \cite{Brito,CW}). Therefore, putting together the results of \cite{BMOR1, Maeta2, Mont-Ratto4}, Theorem\link\ref{Th-isoparametric-p=3} and Corollary\link\ref{Cor-Es-r-e-r-harmonicity}, we have a \textit{complete classification} of the compact, $r$-harmonic ($ES-r$-harmonic) hypersurfaces in $\s^4$ with constant $f$ and $|A|^2$. These hypersurfaces are precisely:
\begin{itemize}
\item The small hyperspheres  $\s^3(1/\sqrt r)$;
\item The generalised Clifford tori $\s^1(R_1)\times\s^2(R_2)$ obtained in Theorem\link 1.2 of \cite{Mont-Ratto4}; 
\item The isoparametric hypersurfaces obtained in Theorem\link\ref{Th-isoparametric-p=3} with $m_1=1$.
\end{itemize}\vspace{2mm}

{\rm (iv)} In conclusion, the known results and conjectures concerning biharmonic hypersurfaces in a sphere (see \cite{Balmus, SMCO}) suggest to us to formulate the following:

\vspace{2mm}
\noindent \textbf{Conjecture 1:} \textit{Any proper $r$-harmonic ($ES-r$-harmonic) hypersurface in $\s^{m+1}$ is CMC and has $|A|^2$ equal to a constant.}
\vspace{2mm}

\noindent Or, even stronger:

\vspace{2mm}
\noindent \textbf{Conjecture 2:} \textit{Any proper $r$-harmonic ($ES-r$-harmonic) hypersurface in $\s^{m+1}$ is isoparametric.}\vspace{2mm}

As an additional comment to the above conjectures, we point out that results similar to Theorem\link\ref{Th-Chang} hold without the assumption $M$ compact or complete. Indeed, it is not difficult to show that if a CMC  hypersurface in a space form $N^{m+1}(c)$ has $|A|^2$ constant and at most two distinct principal curvatures at each point, then it is isoparametric of degree either $1$ or $2$. When the hypersurface has three distinct principal curvatures we can  prove the following local version of Theorem~\ref{Th-Chang}:
\begin{proposition} Let $M^m$ be a CMC hypersurface in $\s^{m+1}$ with $|A|^2$ constant. Assume that, at each point, $M$ has three distinct principal curvatures, each of multiplicity at least two. Then $M$ is isoparametric.
\end{proposition}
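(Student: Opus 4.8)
The plan is to prove that all three principal curvatures $k_1,k_2,k_3$ are constant with constant multiplicities; since a hypersurface of a space form with constant principal curvatures is isoparametric, this yields the claim. Because $M$ has exactly three distinct principal curvatures at every point, these are smooth functions with smooth, mutually orthogonal eigendistributions $D_1,D_2,D_3$ whose dimensions $m_1,m_2,m_3\geq2$ are locally, hence globally (on a connected $M$), constant. I fix a local orthonormal frame $\{e_a\}$ adapted to $TM=D_1\oplus D_2\oplus D_3$, so that $Ae_a=k_{(a)}e_a$.

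First I would exploit the Codazzi equation, which in a space form says that the cubic form $h_{abc}:=\langle(\nabla A)(e_a,e_b),e_c\rangle$ is totally symmetric (Lemma~\ref{Lemma-tecnico-1}(b)). A direct computation gives $h_{aac}=e_c(k_{(a)})$ and, for $a\neq b$, $h_{abc}=(k_{(a)}-k_{(b)})\langle\nabla_{e_c}e_a,e_b\rangle$. The crucial step is Cartan's classical observation that a principal curvature of multiplicity at least two is constant along its own eigendistribution: if $a,a'\in D_i$ with $a\neq a'$, then total symmetry of $h$ gives
\begin{equation*}
e_a(k_i)=h_{a'a'a}=h_{aa'a'}=(k_{(a)}-k_{(a')})\langle\nabla_{e_{a'}}e_a,e_{a'}\rangle=0,
\end{equation*}
since $k_{(a)}=k_{(a')}=k_i$. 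Hence $e(k_i)=0$ for every $e\in D_i$, i.e. $\grad k_i\perp D_i$. This is the main obstacle, and it is exactly where the hypothesis $m_i\geq2$ is indispensable: with multiplicity one the index $a'\neq a$ in $D_i$ does not exist, which is precisely why Chang's Theorem~\ref{Th-Chang} instead assumes compactness.

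Next I would bring in the two constancy hypotheses. Writing $m=m_1+m_2+m_3$, the quantities $m_1k_1+m_2k_2+m_3k_3=mf$ and $m_1k_1^2+m_2k_2^2+m_3k_3^2=|A|^2$ are constant, so differentiating yields
\begin{equation*}
m_1\,dk_1+m_2\,dk_2+m_3\,dk_3=0,\qquad m_1k_1\,dk_1+m_2k_2\,dk_2+m_3k_3\,dk_3=0.
\end{equation*}
Now fix $j$ and evaluate on a vector $e\in D_j$. By the previous step $dk_j(e)=0$, so these two relations become a homogeneous linear system in the remaining unknowns $dk_\ell(e),\,dk_{\ell'}(e)$, where $\{\ell,\ell'\}=\{1,2,3\}\setminus\{j\}$, whose determinant is $m_\ell m_{\ell'}(k_{\ell'}-k_\ell)\neq0$ because the three curvatures are pairwise distinct and the multiplicities are nonzero. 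Hence $dk_\ell(e)=dk_{\ell'}(e)=0$ as well, so $dk_1=dk_2=dk_3=0$ on the whole of $D_j$.

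Since $j$ was arbitrary and $TM=D_1\oplus D_2\oplus D_3$, it follows that $dk_1=dk_2=dk_3=0$, i.e. all three principal curvatures are constant on $M$. Together with the constancy of the multiplicities this means that $M$ has constant principal curvatures, and therefore $M$ is isoparametric, completing the proof. The only genuinely delicate point is the Codazzi computation of the second paragraph; the remainder reduces to the elementary linear algebra of the Vandermonde-type system above.
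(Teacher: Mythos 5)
Your proposal is correct and follows essentially the same route as the paper's own proof: first show that each $k_i$ is constant along its own eigendistribution (using multiplicity $\geq 2$), then differentiate the two constants $m_1k_1+m_2k_2+m_3k_3$ and $m_1k_1^2+m_2k_2^2+m_3k_3^2$ along each $D_j$ to get a homogeneous $2\times 2$ system with determinant $m_\ell m_{\ell'}(k_{\ell'}-k_\ell)\neq 0$, forcing all $k_i$ to be constant. The only difference is that you prove the first step directly via the total symmetry of $\langle(\nabla A)(\cdot,\cdot),\cdot\rangle$ (Cartan's argument), whereas the paper simply cites Otsuki for this fact.
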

\begin{proof} As $M$ has a fixed number of distinct principal curvatures at each point, their multiplicities are constant on $M$ and the $k_i$'s are smooth functions. Moreover, the corresponding distributions $T_i$ are integrable and, since $\dim T_i \geq 2$, $X_i k_i=0$ for any vector $X_i \in T_i$, $i=1,2,3$ (see \cite{Otsuki}). Then on $M$ we have:  
\[
\left \{
\begin{array}{l}
m_1 k_1+m_2 k_2+m_3 k_3= m \,\alpha \\
m_1 k_1^2+m_2 k_2^2+m_3 k_3^2= a^2 \,,
\end{array}
\right .
\]

where $a=|A|>0$. Taking the derivative with respect to any $X_1\in T_1$ we get
\[
\left \{
\begin{array}{l}
m_2\, (X_1 k_2)+m_3\, (X_1k_3)=0\\
m_2 k_2\, (X_1 k_2)+m_3 k_3\,( X_1 k_3)= 0 \,.
\end{array}
\right .
\]
Now, since 
\[
\left | 
\begin{array}{cc}
m_2 & m_3 \\
m_2 k_2 & m_3 k_3 
\end{array}
\right | = m_2 m_3 (k_3-k_2) \neq 0 \,,
\]
we deduce that $X_1 k_2=X_1 k_3=0$. In the same way, we obtain $X_2 k_1=X_2 k_3=0$ and $X_3 k_1=X_3 k_2=0$. Therefore, $k_1, k_2$ and $k_3$ are constant functions on $M$ and so $M$ is isoparametric.
\end{proof}

\end{document}